\newcommand{\be}{\begin{equation}}
\newcommand{\ee}{\end{equation}}
\newcommand{\bea}{\begin{eqnarray}}
\newcommand{\eea}{\end{eqnarray}}
\newcommand{\beas}{\begin{eqnarray*}}
\newcommand{\eeas}{\end{eqnarray*}}
\newcommand{\bbD}{\mathbb D}
\newcommand{\bbE}{\mathbb E}
\newcommand{\bbF}{\mathbb F}
\newcommand{\bbH}{\mathbb H}
\newcommand{\bbI}{\mathbb I}
\newcommand{\bbL}{\mathbb L}
\newcommand{\bbM}{\mathbb M}
\newcommand{\bbN}{\mathbb N}
\newcommand{\bbP}{\mathbb P}
\newcommand{\bbR}{\mathbb R}
\newcommand{\bbS}{\mathbb S}
\newcommand{\scA}{\mathcal A}
\newcommand{\scB}{\mathcal B}
\newcommand{\scC}{\mathcal C}
\newcommand{\scD}{\mathcal D}
\newcommand{\scE}{\mathcal E}
\newcommand{\scF}{\mathcal F}
\newcommand{\scH}{\mathcal H}
\newcommand{\scN}{\mathcal N}
\newcommand{\scP}{\mathcal P}
\newcommand{\scS}{\mathcal S}
\newcommand{\scV}{\mathcal V}
\newcommand{\scX}{\mathcal X}
\newcommand{\scY}{\mathcal Y}
\newcommand{\scZ}{\mathcal Z}
\newcommand{\veps}{\varepsilon}
\newcommand{\norm}[1]{\ensuremath{\left\| #1 \right\|}}
\newcommand{\abs}[1]{\ensuremath{\left| #1 \right|}}
\DeclareMathOperator*{\esssup}{ess\,sup}
\newcommand{\indicator}[1]{\ensuremath{\mathbf{1}_{#1}}}
\newcommand{\Ito}{It\ensuremath{\hat{\textrm{o}}}}
\newcommand{\half}{\frac{1}{2}}
\newcommand{\crl}[1]{\ensuremath{ \left\{ #1 \right\} }}
\newcommand{\edg}[1]{\ensuremath{ \left[ #1 \right] }}
\newcommand{\brak}[1]{\ensuremath{\left( #1 \right)}}
\newtheorem{theorem}{Theorem}[section]
\newtheorem{definition}[theorem]{Definition}
\newtheorem{proposition}[theorem]{Proposition}
\newtheorem{corollary}[theorem]{Corollary}
\newtheorem{lemma}[theorem]{Lemma}
\newtheorem{remark}[theorem]{Remark}
\newtheorem{example}[theorem]{Example}
\newtheorem{examples}[theorem]{Examples}
\newtheorem{foo}[theorem]{Remarks}
\newenvironment{Example}{\begin{example}\rm}{\end{example}}
\newenvironment{Remark}{\begin{remark}\rm}{\end{remark}}
\title{Locally Lipschitz BSDE driven by a continuous martingale\\{\Large path-derivative approach}}
\author{
Kihun Nam\\
Rutgers University\\
Piscataway, NJ 08854, USA
}
\begin{document}
\maketitle
\begin{abstract}
Using a new notion of path-derivative, we study well-posedness of backward stochastic differential equation driven by a continuous martingale $M$ when $f(s,\gamma,y,z)$ is locally Lipschitz in $(y,z)$: 
\[Y_{t}=\xi(M_{[0,T]})+\int_{t}^{T}f(s,M_{[0,s]},Y_{s-},Z_{s}m_{s})d{\rm tr}[M,M]_{s}-\int_{t}^{T}Z_{s}dM_{s}-N_{T}+N_{t}\]
Here, $M_{[0,t]}$ is the path of $M$ from $0$ to $t$ and $m$ is defined by $[M,M]_{t}=\int_{0}^{t}m_{s}m_{s}^{*}d{\rm tr}[M,M]_{s}$. When the BSDE is one-dimensional, we could show the existence and uniqueness of solution. On the contrary, when the BSDE is multidimensional, we show existence and uniqueness only when $[M,M]_{T}$ is small enough: otherwise, we provide a counterexample that has blowing-up solution. Then, we investigate the applications to utility maximization problems.
\\[2mm]
{\bf MSC 2010:} 60H10, 60H07, 93E20\\[2mm]
{\bf Key words:} Backward stochastic 
differential equation, path differentiability, functional
derivative, coefficients of superlinear growth, utility maximization\\[2mm]
\end{abstract}
\setcounter{equation}{0}
\section{Introduction}
\label{sec:intro}
Let $M$ be a square integrable
continuous $n$-dimensional local martingale with quadratic
covariation matrix $[M,M]_{t}=\int_{0}^{t}m_{s}m_{s}^{*}d{\rm
  tr}[M,M]_{s}$ for a $\bbR^{n\times
  n}$-valued process $m$.
%We assume that $[N^{i},N^{j}]_{s}=\delta_{ij}K_{s}$. 
We let $D$ be the set of c\`adl\`ag $\bbR^{n}$-valued functions on
$[0,T]$. Consider the
following backward stochastic differential equation (BSDE) driven by $M$ where the \emph{terminal condition} is
$\xi:D\to\bbR^{d}$ and the \emph{driver} is
$
f:[0,T]\times D\times\bbR^{d}\times\bbR^{d\times n}\to\bbR^{d}
$:
\begin{align}\label{bsde}
Y_{t}=\xi(M_{[0,T]})+\int_{t}^{T}f(s,M_{[0,s]},Y_{s-},Z_{s}m_{s})d{\rm tr}[M,M]_{s}-\int_{t}^{T}Z_{s}dM_{s}-N_{T}+N_{t} 
\end{align}
Here, we denote $M_{[0,s]}$ to be the path of $M$ stopped at $s$. The \emph{solution} of above
BSDE is a triplet $(Y,Z,N)$ of adapted processes satisfying $[N,M]=0$.
We study the existence and uniqueness of solution when $\xi(\gamma)$ and $f(s,\gamma,y,z)$ are Lipschitz in $\gamma$ and locally Lipschitz in $(y,z)$. In order to do so, we study the differentiability of solutions under the perturbation of the path of $M$. Then, we apply our result to various utility maximization problems. 
%In order to achieve the well-posedness, we first find a priori estimate of the solution by exploiting the path-regularity of $\xi$ and $f$, and then prove the existence and uniqueness by localizing $f$ in bounded domain of $z$. Then, we use the martingale method in Hu et al. (2005, \cite{Hu:2005tj}) for utility maximization problem.

BSDE was first introduced by Bismut (1973, \cite{Bismut:1973vw})  as a dual problem of stochastic optimization under the assumptions $d=1$, Brownian motion $M$, and a linear function $f$. Then, Pardoux and Peng (1990, \cite{Pardoux:1990p20509}) extended the well-posedness result to $d\geq 1$ and Lipschitz function $f$. One can find classical results and applications in the survey paper written by El Karoui et al. (1997, \cite{ElKaroui:1997dn}). Since Pardoux and Peng's seminal paper, researchers extended the well-posedness result in various directions.

One direction of extension is to incorporate the case where $f$ grows superlinearly in $z$. The well-posedness results for such BSDEs have numerous applications including utility
maximization in incomplete market (Hu et al. 2005, \cite{Hu:2005tj}),
dynamic coherent risk measure (Gianin, 2006, \cite{RosazzaGianin:2006p26781}),
equilibrium pricing in incomplete market (Cheridito et al., 2016,
\cite{Cheridito:2009p29901}), and more recently, stochastic Radner equilibrium in
incomplete market (Kardaras et al., 2015, \cite{Kardaras:2015ii}). When $d=1$ and $\xi$ is bounded, Kobylanski (2000, \cite{Kobylanski:2000cy}) proved the existence and uniqueness of
solution when $f(s,\gamma,y,z)$ grows quadratically in $z$. Briand and Hu (2006, \cite{Briand:2006p17043}, 2008, \cite{Briand:2008p16923}), and Delbaen et al. (2011, \cite{Delbaen:2011ji}) further extended the results to unbounded terminal condition $\xi$. Superquadratic BSDE driven by a Brownian motion also attracted the interest among mathematician. Delbaen et al. (2010, \cite{Delbaen:2010gj}) showed that such BSDE is ill-posed if there is no regularity assumption on the terminal condition and the driver. Richou (2012, \cite{Richou:2012bf}) studied the existence and uniqueness of solution for superquadratic Markovian BSDE. Cheridito and Nam (2014, \cite{Cheridito:2014cb}) showed the existence and uniqueness of
solution for the non-Markovian case using Malliavin calculus and its connection to semilinear
parabolic PDEs under the Markovian assumption.

On the contrary, when $d>1$, Frei and dos Reis (2012, \cite{Anonymous:2012p19201})
showed that a multidimensional BSDE with a quadratic driver might not
be well-posed. By choosing a terminal condition which is
irregular with respect to the underlying Brownian motion, they were able
to construct an example such that the solution $Y$ blows up. When one does not assume regularity conditions on $\xi$ and $f$, only a few positive results are known when the terminal condition is small, or the driver satisfies certain restrictive structural conditions: see Tevzadze (2008, \cite{Tevzadze:2008p17046}),
Cheridito and Nam (2014, \cite{Cheridito:2013tg}), Hu and Tang (2014,
\cite{Hu:2014th}), Jamneshan et al, (2014, \cite{Jamneshan:2014ui}), Kupper et al. (2015, \cite{Kupper:2015wv}),  and
Xing and Zitkovic (2016, \cite{Xing:2016up}). When $\xi$ and $f$ are assumed to be regular, Nam(2014,\cite{Nam:2014wt}), Kupper et al. (2015, \cite{Kupper:2015wv}), and Cheridito and Nam (2017, \cite{Cheridito:2014uz})

Researchers also tried to generalize Brownian motion to a general martingale $M$. When $M$ is a continuous martingale, El Karoui and Huang (1997, \cite{ElKaroui:1997vv}) provided the existence and uniqueness of solution in the case where $f(t,\gamma_{[0,t]},y,z)$ is Lipschitz with respect to $(y,z)$ when $d\geq1$. When $d=1$, Morlais (2009, \cite{Morlais:2008jr}) investigated the existence and uniqueness of solution when $f(t,\gamma_{[0,t]},y,z)$ has quadratic growth in $z$.  Researchers generalized even to the case where $M$ is a general martingale with jumps. To name a few, Possamai et al. (2015, \cite{KaziTani:2015vd}) studied the case where $d=1$, $f$ has quadratic growth in $z$, and $M$ has jumps. On the other hand, Papapantoleon et al. (2016, \cite{Papapantoleon:2016ws}) treat the case where$d\geq 1$ and $f$ is (stochastically) Lipschitz in $(y,z)$. However, the following questions have not been answered when $M$ is a general martingale: 
\begin{itemize}
	\item If $d=1$, does one have well-posedness when $f(s,\gamma,y,z)$ grows superquadratically in $z$?
	\item If $d>1$, does one have well-posedness when $f(s,\gamma,y,z)$ grows superlinearly in $z$?
\end{itemize}

In this article, we answer these questions when $M$ is a continuous martingale; $\xi(\gamma)$ is Lipschitz with respect to $\gamma$; and $f(s,\gamma, y,z)$ is Lipschitz in $\gamma$ and locally Lipschitz in $(y,z)$. To be more specific, we were able to establish existence and uniqueness of solution when $d=1$ and find a uniform almost sure bound of the solution $Z$. In the case where $d>1$, we have the existence and uniqueness of solution as well as the bound of $Z$ only if $[M,M]_T$ is small enough: otherwise, we provide a counterexample such that $Z$ blows up.  We apply the 1D result to various kinds of control problems for SDE driven by $M$ using the martingale
method introduced by Hu et al. (2005, \cite{Hu:2005tj}). In the case where $M$ has jumps, our method does not work anymore,\footnote{see Remark \ref{jump}} and we leave this question for future papers.

The argument is based on the analysis of the stability under perturbation of $M$. We call this stability \textit{path-differentiability} of the solution. In other words, when we model stochastic optimization problem as a BSDE, the path-derivative of $Y$ implies the stability of value process with respect to the perturbation of underlying noise. An important property, which is called \textit{delta-hedging formula}, is that $Z$ is the path-derivative of $Y$ under appropriate notion if $M$ possesses martingale representation property. If one can find a uniform bound of $Z$ by estimating the derivative of $Y$ and using delta-hedging formula, we can use the localization argument to prove the well-posedness of BSDEs with locally Lipschitz drivers. 

Using Malliavin calculus on BSDE as in El Karoui et al. (1997, \cite{ElKaroui:1997dn}) and Hu et al. (2012, \cite{Nualart:2012p35991}), this strategy was used in Briand and Elie (2013, \cite{Briand:2013cu}), Cheridito and Nam (2014, \cite{Cheridito:2014cb}), and Kupper et al. (2015, \cite{Kupper:2015wv}) when $M$ is a Brownian motion. However, this method cannot be trivially extended to a continuous martingale $M$ because $M$ is not Malliavin differentiable in general. For example, consider the case where $M$ is a Brownian motion stopped at a hitting time. Even when $\xi$ is smooth, $\xi(W_{\tau})$ is not Malliavin
differentiable in general.\footnote{Cheridito and Nam (2014,
	\cite{Cheridito:2014cb}) If 
	$\tau$ is a stopping time such that $W_{\tau} $ is Malliavin differentiable, then $\tau$ must be a constant.
	Indeed, for $W_{\tau} = \int_0^{\infty} 1_{\crl{s < \tau}} dW_s \in \bbD^{1,2}$, one obtains from 
	Proposition 5.3 of El Karoui et al. (1997, \cite{ElKaroui:1997dn}) that $1_{\crl{s < \tau}} \in \bbD^{1,2}$ for almost 
	all $s$, and therefore, by Proposition 1.2.6 of Nualart (2006, \cite{Nualart:2006p36713}), $\bbP[s < \tau] = 0$ or $1$. } Therefore, classical Malliavin
calculus method used in the papers mentioned above cannot be used to study the path-differentiability of solution for this type of BSDE.\footnote{This type of BSDE is also known as BSDE with random terminal time and studied by numerous researchers including Darling and Pardoux (1997, \cite{Darling:1997ui}) and Jeanblanc et al. (2015, \cite{JEANBLANC:2015hf}).}

One may define another path-derivative notion for BSDE by assuming Markovian structure, that is one assumes $\xi(\gamma)=\xi(X_T)$ and $f(t,\gamma_{[0,T]},y,z)=f(t,X_t,y,z)$ where $dX_t=b(t,X_t)dA_t+\sigma(t,X_t)dM_t$ for some deterministic function $b$ and $\sigma$. In many cases, there is a deterministic measurable function $u$ such that $Y_t=u(t, X_t,M_t)$. Then one can define path-differentiability as a classical differentiability of the function $u$. This approach was used in Imkeller et al. (2012, \cite{Imkeller:2012dh}) to study existence, uniqueness, and path-differentiability of \eqref{bsde} with $d=1$ and $f$ grows quadratically in $z$. However, our problem deals with fully path-dependent $\xi$ and $f$, so this method also need to be extended to incorporate our problem.

One of the recent definitions of ``path-derivative" is the functional {\Ito} derivative developed by Dupire (2009, \cite{Dupire:2009eu}), and Cont and Fournie (2013,\cite{Cont:2013hy}). The perturbation in functional {\Ito} derivative is given by either horizontal or vertical displacement of the path at the last time. The functional {\Ito} calculus is general in a sense that it assume neither Markovian structure nor Gaussian property of $M$. Using vertical functional {\Ito} derivative, Cont (2016, \cite{Cont:2016wx}) was able to get a delta-hedging formula for \eqref{bsde} when $M$ is a continuous semimartingale determined by forward SDE driven by Brownian motion. Even though functional {\Ito} calculus has its own strength, it is not suitable for obtaining a uniform bound of $Z$. The reason is that we do not know the equation the functional {\Ito} derivative of $Y$ satisfies.

In order to find a uniform estimate of $Z$, we modify the vertical functional {\Ito} derivative to time-parametrized version similar to Malliavin derivative and use such notion to obtain BSDE for path-derivatives of $Y$ and $Z$. Then, by the classical method in BSDE, we get a uniform bound of $Z$. However, we should note that the path-functional representation of random variables and stochastic processes are not unique and our path-derivative definition crucially depends on the representation. Therefore, it is important to select logically consistent representations of the coefficients $\xi, f$ and our solution $Y,Z,N$. This is done by Theorem \ref{prelim2} and it is the main reason why we cannot extend our result to the case where $M$ has jumps.

The article is organized as follows. In Section \ref{sec:pre}, we
give the definitions, notations, and assumptions we use throughout this article. In
Section \ref{sec:basic}, we review the basic properties of BSDE with
Lipschitz driver and driven by a continuous martingale. Then we study
the differentiability of BSDE in Section \ref{sec:diffbsde}. Using results
from Section \ref{sec:basic} and \ref{sec:diffbsde}, we study the
existence and uniqueness of solution for BSDEs with locally
Lipschitz drivers in Section
\ref{sec:ext}. In particular, we show the existence and uniqueness
of solution when $[M,M]_{T}$ is small enough or $d=1$. Otherwise, the
solution may blow-up, and it is shown by an example in
subsection \ref{sec:counter}. Using the martingale method in Hu et al. (2005, \cite{Hu:2005tj}), we study utility maximization of controlled
SDE in Section \ref{sec:diff}. In Section \ref{sec:ops}, for power and exponential
utility function, the scheme is applied to optimal portfolio
selection under three different types of restriction: 1) when the
investment strategy is restricted to a closed set; 2) when the
diversification of portfolio gives the investor extra benefit; and 3)
when there is information processing cost for investment. 

\setcounter{equation}{0}
\section{Preliminaries}
\label{sec:pre}
{\noindent\bf Real space}

\smallskip
We denote $\bbR$ the set of real number and $\bbR_{+}$ the set of
nonnegative real numbers. For any natural numbers $l$ and $m$, $\bbR^{l\times m}$ is the set of real
$l$-by-$m$ matrices. $\bbR^{m}$ is the set of $m$-dimensional
real vectors and we identify with $\bbR^{m\times 1}$ unless
otherwise stated. For any matrix $X$,  we let $X^{*}$ to
be its transpose and we define $|X|$ to
be the Euclidean norm, that is
$|X|^{2}:={\rm tr}(XX^{*})$. We always endow Borel $\sigma$-algebra on
$\bbR^{l\times m}$ with respect to the norm $|\cdot|$ and denote it
by $\scB(\bbR^{l\times m})$. For $X\in\bbR^{l\times n}$, we denote
$(i,j)$-entry of $X$ as $X^{ij}$. We denote $\bbI$ to be the identity
matrix of appropriate size.

\bigskip
{\noindent\bf Probability space and the driving martingale}

\smallskip
Let $(\Omega,\scF,\bbF, \bbP)$ be a filtered probability space. 
We assume the filtration $\bbF:=(\scF_{t})_{t\in[0,T]}$ is complete,
quasi-left continuous,
and right continuous. Let $M$ be a square integrable
continuous $n$-dimensional martingale with a continuous
predictable quadratic covariation matrix $[M,M]$ and $M_{0}=0$. We assume that there exists a $\bbR^{n\times n}$-valued predictable process $m$ such that
\[[M,M]_{t}=\int_{0}^{t}m_{s}m_{s}^{*}dA_{s}\] 
where
\[
A_{t}:={\rm tr}[M,M]_{t}=\sum_{{i=1}}^{n}[M^{i},M^{i}]_{t}.
\]
Moreover, we always assume that $A_{T}$ is bounded by $K$. Then,
we have two consequences: 
\begin{itemize}
\item $|m_{s}|=1$ $ds\otimes d\bbP$-a.e. \phantom{XXX}($\because
  A_{t}=\sum_{i=1}^{n}\int_{0}^{t}\sum_{k=1}^{n}|m_{s}^{ik}|^{2}dA_{s}=\int_{0}^{t}|m_{s}|^{2}dA_{s}$)
\item $|[M,M]_{T}|\sim A_{T}$ \phantom{XXXXXX.}($\because A_{T}/\sqrt n\leq|[M,M]_{T}|=\abs{\int_{0}^{T}m_{s}m_{s}^{*}dA_{s}}\leq\int_{0}^{T}|m_{s}|^{2}dA_{s}=A_{T}$)
\end{itemize}

In addition, we assume there exists a Poisson random measure $\nu$ on $[0,T]\times \bbR^{n}$ with mean ${\rm Leb}\otimes \mu$ where
$\mu$ is the uniform probability measure on a unit ball centered at $0$. We let
\[
\hat M_{t}:=\int_{[0,t]\times\bbR^{n}}x\nu(ds,dx)
\]
and $\bbR^{n}$-valued c\`adl\`ag martingale $M':=M+\hat M$.  We
assume that $M$ and $\nu$ are independent and moreover, for any given
$\gamma\in\crl{\hat M(\omega):\omega\in\Omega}$ and $\omega'\in\Omega$, there is $\omega\in\Omega$ such that $\hat
M(\omega)=\gamma$ and $M(\omega)=M(\omega')$.
We also assume that $\bbF^{M}$, the augmentation of
$\sigma(M_{s}:s\leq t)$, is quasi-left continuous and right continuous.
This condition is true when $M$ is a Hunt
process: see Proposition 2.7.7 of Karatzas and Shreve (1991,
\cite{Karatzas:1991bi}) and Section 3.1 of Chung and Walsh (2005,
\cite{Chung:2005iu}). Therefore, any Feller process $M$ satisfies this
property. This implies, $\bbF^{M'}$, the augmentation of
$\sigma(M'_{s}:s\leq t)$, is also quasi-left continuous and right
continuous. It is noteworthy to observe $\bbF^{M}\subset\bbF^{M'}$
because $M$ is a continuous process while $\hat M$ is a pure jump process.

Note that since $\bbF$ is continuous, for any
$\bbR^{d}$-valued $(\bbF,\bbP)$-martingale is of
the form $\int ZdM+N$ where $Z$ is a $\bbF$-predictable $\bbR^{d\times
  n}$-valued process and $N$ is a $\bbR^{d}$-valued
$(\bbF,\bbP)$-martingale with $[N,M]=0$. This statement also holds
with $\bbF^{M'}$ or $\bbF^{M}$ instead of $\bbF$.

As always, we understand equalities and inequalities in
$\bbP$-almost sure sense.

\bigskip

{\noindent\bf The space of c\`adl\`ag paths}

\smallskip
We let $D$ be the set of all c\`adl\`ag $\bbR^{n}$-valued functions on
$[0,T]$. For $\gamma\in D$, we denote
$\gamma_{t}$ to be the value at time $t$ and $\gamma_{[0,t]}$ to be
the function $\gamma_{[0,t]}(s):=\gamma_{s\wedge t}$. For $\gamma,\gamma'\in D$, we define $(\gamma+\gamma')_t:=\gamma_t+\gamma'_t$. On $D$, we endow a sup norm,
$\norm{\gamma}_{\infty}:=\sup_{t\in[0,T]}|\gamma_t|$ and let $\scD$ be its Borel $\sigma$-algebra.  Then, we have the following lemma
whose proof is given at the appendix.
\begin{lemma}\label{msble}
A $\bbR^{k}$-valued stochastic process $X$ is adapted to $\bbF^{M'}$ if and only if there
exists a path functional $\scX:[0,T]\times D\to\bbR^{k}$ such that
\[
X_{t}=\scX(t,M'_{[0,t]})
\]
holds almost surely for each $t\in[0,T]$ and $\scX(t,\cdot)$ is $\scD$-measurable.
\end{lemma}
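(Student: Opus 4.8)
The plan is to prove the two implications separately, after isolating the role of the coordinate maps. Write $e_s:D\to\bbR^n$, $e_s(\gamma):=\gamma_s$, and let $\scC:=\sigma\brak{e_s:s\in[0,T]}$ be the cylinder $\sigma$-algebra. Since $\abs{e_s(\gamma)-e_s(\gamma')}\le\norm{\gamma-\gamma'}_\infty$, each $e_s$ is $1$-Lipschitz, hence continuous and $\scD$-measurable, so $\scC\subseteq\scD$. The stopped-path map $\Phi_t:\Omega\to D$, $\Phi_t(\omega):=M'_{[0,t]}(\omega)$, satisfies $e_s\circ\Phi_t=M'_{s\wedge t}$, which is $\scF^{M'}_t$-measurable for every $s$; hence $\Phi_t$ is $\scF^{M'}_t/\scC$-measurable, and the $\sigma$-algebra it generates into $(D,\scC)$ is exactly $\sigma\brak{M'_u:u\le t}$ up to the augmentation.

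For the ``only if'' direction, fix $t$ and assume $X$ is $\bbF^{M'}$-adapted, so that $X_t$ is measurable for the augmentation of $\sigma\brak{M'_u:u\le t}=\sigma(\Phi_t)$. I would invoke the Doob--Dynkin functional representation: a random variable measurable with respect to $\sigma(\Phi_t)$ equals $g\circ\Phi_t$ for some $\scC$-measurable $g:D\to\bbR^k$. The augmentation only enlarges the $\sigma$-algebra by $\bbP$-null sets, so after modifying $g$ on a null set one obtains $\scX(t,\cdot):=g$ with $X_t=\scX(t,M'_{[0,t]})$ almost surely; as $\scC\subseteq\scD$, $\scX(t,\cdot)$ is a fortiori $\scD$-measurable. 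Performing this construction for each $t$ yields the required functional, which is the easy half.

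For the ``if'' direction, suppose $X_t=\scX(t,M'_{[0,t]})$ a.s. with $\scX(t,\cdot)$ $\scD$-measurable. To deduce adaptedness I need $\scX(t,\cdot)\circ\Phi_t$ to be $\scF^{M'}_t$-measurable, i.e.\ $\Phi_t^{-1}(\scD)\subseteq\scF^{M'}_t$. The first concrete step is to show that open balls lie in $\scC$: for $\gamma\in D$ the map $s\mapsto\abs{\gamma_s-\gamma'_s}$ is c\`adl\`ag, so its supremum over $[0,t]$ equals the supremum over the countable set $(\bbQ\cap[0,t])\cup\crl{t}$; thus $\norm{\cdot-\gamma}_\infty$ is a countable supremum of $\scC$-measurable maps, every ball $\crl{\norm{\cdot-\gamma}_\infty<r}$ is in $\scC$, and consequently $\Phi_t^{-1}(B)\in\scF^{M'}_t$ for all $B$ in the $\sigma$-algebra generated by balls.

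The main obstacle is precisely the passage from the ball-generated $\sigma$-algebra to the full Borel $\sigma$-algebra $\scD$. Because $M'=M+\hat M$ carries a jump component, the uniform metric makes $D$ non-separable: the single-jump paths $\crl{x\indicator{[u,T]}:u\in[0,t]}$ are mutually at distance $\abs{x}$, so a generic open set is an uncountable union of balls and need not be ball-generated, and a bare cylinder-measurable map into a non-separable metric space need not be Borel-measurable into it. My strategy to close this gap is to exploit the concrete structure of $M'$: the continuous part $M_{[0,t]}$ takes values in the separable space $C([0,t])\subset D$, on which Borel equals cylinder, while $\hat M$ has only finitely many jumps on $[0,t]$ (as $\nu$ has finite intensity), so conditionally on their number the path is a measurable image of the jump times and sizes. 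Combining this decomposition with the completeness of $\bbF^{M'}$ --- which lets one absorb the $\bbP$-null discrepancies produced by the atomless law of the jump times --- one reduces the measurability of $\Phi_t^{-1}(\scD)$ to the ball/cylinder level already handled. Carrying this reduction out carefully, or equivalently verifying that $\scD$ and $\scC$ induce the same sets along the range of $\Phi_t$ modulo $\bbP$-null sets, is the delicate crux of the argument and the step I expect to require the most care.
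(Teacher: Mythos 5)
Your ``only if'' half is essentially the paper's own proof: the paper applies the Doob--Dynkin theorem (in Cinlar's form) to write $X_t=\scX^{c}_{t}(M'_{t_1},M'_{t_2},\dots)$ for countably many $t_i\le t$ and composes with the coordinate projection $\pi_{t_1,t_2,\dots}$, which is the same as your representation through $\sigma(\Phi_t)$ with a cylinder-measurable $g$, the augmentation being handled by passing to an a.s.\ version exactly as you do. The genuine problem is the ``if'' half: you correctly isolate the crux --- that the uniform-Borel $\sigma$-algebra $\scD$ is strictly larger than the ball/cylinder $\sigma$-algebra $\scC$ --- but you leave it as a sketch, and the reduction you propose cannot be carried out. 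Concretely, for an arbitrary set $A\subseteq[0,t]$, measurable or not, let $S_A:=\crl{\gamma\in D:\exists s\in A,\ |\gamma_s-\gamma_{s-}|\geq 1/2}$. If $\gamma_n\to\gamma$ uniformly and each $\gamma_n$ has a jump of modulus $\geq 1/2$ at some $s_n\in A$, then since jumps are perturbed by at most $2\norm{\gamma_n-\gamma}_\infty$ at every fixed time, while $\gamma_{s_n}-\gamma_{s_n-}\to 0$ whenever $s_n\to s$ with $s_n\neq s$ (existence of one-sided limits), along a subsequence the $s_n$ are eventually equal to a single $s\in A$ at which $\gamma$ jumps by at least $1/2$. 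Hence $S_A$ is closed in $(D,\norm{\cdot}_\infty)$, so $S_A\in\scD$ for \emph{every} $A$. But $\Phi_t^{-1}(S_A)$ is the event that $\hat M$ has a jump of modulus $\geq 1/2$ at some time in $A$; conditionally on exactly one such jump in $[0,t]$ (an event of positive probability, since $\mu(\crl{|x|\geq 1/2})>0$) its time is uniformly distributed, so for non-Lebesgue-measurable $A$ this event has different inner and outer probability and belongs to no completion, in particular not to $\scF^{M'}_t$. Thus $\Phi_t^{-1}(\scD)\subseteq\scF^{M'}_t$ is simply false; completeness cannot absorb the discrepancy, and your ``modulo $\bbP$-null sets along the range of $\Phi_t$'' reduction fails precisely on the one-jump paths your decomposition is built from.

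You should also know that the paper's own proof of this half is the one-line assertion that $M'_{[0,t]}$ is $\scF_t\backslash\scD$-measurable, which the example above shows is equally unjustified for the uniform-Borel $\sigma$-algebra; this is the classical single-jump phenomenon that motivates the Skorokhod topology. The coherent repair, implicit in the ``only if'' construction, is to prove the ``if'' direction for \emph{cylinder}-measurable functionals: Doob--Dynkin produces $\scX(t,\cdot)=\scX^{c}_{t}\circ\pi_{t_1,t_2,\dots}$, which is $\scC$-measurable (and a fortiori $\scD$-measurable since $\scC\subseteq\scD$, which yields the stated ``only if''), while for $\scC$-measurable $\scX(t,\cdot)$ adaptedness is immediate because $e_s\circ\Phi_t=M'_{s\wedge t}$ is $\scF^{M'}_t$-measurable --- exactly your first two steps. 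If one insists on the lemma as literally stated, observe that the pathological compositions above are a.s.\ equal to \emph{no} random variable at all, so the ``if'' statement could conceivably be rescued by a perfectness/fiber argument (a measurable $X_t$ a.s.\ equal to a function constant on the fibers of $\Phi_t$ is a.s.\ $\sigma(\Phi_t)$-measurable), but that is a different and genuinely harder argument than either your outline or the paper's one-liner, and your proposed continuous-part-plus-finitely-many-jumps reduction does not supply it.
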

Let
$x_{t}(\gamma)=\gamma_{t}$ and define a filtration
$\scH_{t}:=\sigma(\crl {x_{s}:s\leq t})$. We let $\scP$ be the
predictable $\sigma$-algebra on $[0,T]\times D$ associated with
filtration $\crl{\scH_{t}}$. Then, it is easy to check that if a
function $f:[0,T]\times D\to\bbR^{d}$ is $\scP$-measurable, then
 $f(t,M_{[0,t]})$ is a predictable processe since
$M_{[0,\cdot]}:[0,T]\times\Omega\to D$ is a predictable processe.

%We assume that $[N^{i},N^{j}]_{s}=\delta_{ij}K_{s}$. 

\bigskip
{\noindent\bf Banach space}

\smallskip
We set the following Banach spaces:
\begin{itemize}
\item[$\bbL^{2}$:] all $d$-dimensional random vectors $X$ satisfying
$\norm{X}_2:=\sqrt{\bbE |X|^2} < \infty$
\item[$\bbS^{2}$:] all $\mathbb{R}^d$-valued c\`adl\`ag adapted processes
$(Y_t)_{0 \le t \le T}$ satisfying
$
\|Y\|_{\mathbb{S}^2}: = \norm{\sup_{0 \leq t \le T}|Y_{t}|}_2 < \infty
$
\item[$\bbH^{2}$:]all $\mathbb{R}^d$-valued c\`adl\`ag adapted processes
$(Y_t)_{0 \le t \le T}$ satisfying
$
\|Y\|_{\mathbb{H}^2}: = \bbE\int_{0}^{T}|Y_{t-}|^{2}dA_{t} < \infty
$
\item[$\bbH_{m}^{2}$:]all $\mathbb{R}^d$-valued predictable processes
$(Z_t)_{0 \le t \le T}$ satisfying
$
\|Z\|_{\mathbb{H}^2_{m}}: =  \bbE\int_{0}^{T}|Z_{t}m_{t}|^{2}dA_{t} < \infty
$
\item[$\bbM^{2}$:]all c\`adl\`ag martingale
$(N_t)_{0 \le t \le T}$ satisfying
$
\|N\|_{\mathbb{M}^2}: = \bbE {\rm tr}[N,N]_{T} < \infty
$, $ [N,M]=0 $, and $N_{0}=0$.
\end{itemize}

\bigskip
{\noindent\bf BSDE and its solution}

\smallskip
Assume that $\xi:D\to\bbR^{d}$ is $\scD$-measurable and $f:[0,T]\times
D\times\bbR^{d}\times\bbR^{d\times n}\to\bbR^{d}$ is
$\scP\otimes\scB(\bbR^{d})\otimes\scB(\bbR^{d\times n})$-measurable. The \emph{solution} of BSDE($\xi,f$) is a triplet of adapted processes
$(Y,Z,N)\in\bbH^{2}\times\bbH^{2}_{m}\times\bbM^{2}$ which satisfies
\begin{equation}\tag{BSDE($\xi,f$)}
Y_{t}=\xi(M_{[0,T]})+\int_{t}^{T}f(s,M_{[0,s]},Y_{s-},Z_{s}m_{s})dA_{s}-\int_{t}^{T}Z_{s}dM_{s}-N_{T}+N_{t}.
\end{equation}
With a slight abuse of notation, sometimes we denote the above BSDE
as BSDE($\xi(M_{[0,T]}),f$).

\section{Properties of BSDE with Lipschitz
  Driver}\label{sec:basic}
In this section we present existence, uniqueness, stability, comparison, and path-representation results regarding
BSDE($\xi,f$) when $f(s,\gamma,y,z)$ is Lipschitz with respect to $(y,z)$. Except for path-representation of solutions provided in Theorem \ref{prelim} and Theorem \ref{prelim2}, most of results are well-known: see e.g. El Karoui and Huang (1997, \cite{ElKaroui:1997vv}). However, we provide the proof for the readers' convenience. Set:
\begin{itemize}
\item[(STD)] The terminal condition $\xi$ is in $\bbL^{2}$. Let
  $\scP'$ be the progressively measurable $\sigma$-algebra on
  $[0,T]\times \Omega$. The driver
  $f:[0,T]\times\Omega\times\bbR^{d}\times\bbR^{d\times n}\to\bbR^{d}$ is
  $\scP'\otimes\scB(\bbR^{d})\otimes \scB(\bbR^{d\times
    n})$-measurable function such that
  $\bbE\int_{0}^{T}|f(s,0,0)|^{2}dA_{s}<\infty$. Moreover, we assume
  that there are $C_{y},C_{z}\in\bbR_{+}$ such that
\[
|f(s,y,z)-f(s,y',z')|\leq C_{y}|y-y'|+C_{z}|z-z'|.
\]
\end{itemize}
\begin{proposition}\label{stdbsde}
Assume {\rm (STD)}. Then there exists a unique solution
$(Y,Z,N)\in\bbH^{2}\times\bbH^{2}_{m}\times\bbM^{2}$ of
\begin{align}\label{stdBSDEeq}
Y_{t}=\xi+\int_{t}^{T}f(s,Y_{s-},Z_{s}m_{s})dA_{s}-\int_{t}^{T}Z_{s}dM_{s}-N_T+N_t 
\end{align}
and moreover, $Y\in\bbS^{2}$.
\end{proposition}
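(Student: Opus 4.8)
The plan is to prove existence and uniqueness by a fixed-point (Picard) argument on a suitably weighted Banach space, reducing the nonlinear equation to a family of linear problems that are solved directly via the martingale representation recalled in Section~\ref{sec:pre}.

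\textbf{Step 1 (solving a ``frozen'' driver).} First I would treat the case in which the driver does not depend on the solution. Given $(y,z)\in\bbH^{2}\times\bbH^{2}_{m}$, set $g_{s}:=f(s,y_{s-},z_{s}m_{s})$. Using the Lipschitz estimate in (STD) together with $\bbE\int_{0}^{T}\abs{f(s,0,0)}^{2}dA_{s}<\infty$, the bound $A_{T}\leq K$, and $\abs{m_{s}}=1$, one checks that $\bbE\int_{0}^{T}\abs{g_{s}}^{2}dA_{s}<\infty$, so $\xi+\int_{0}^{T}g_{s}\,dA_{s}\in\bbL^{2}$. Consider the square-integrable martingale
\[
\Xi_{t}:=\bbE\edg{\xi+\int_{0}^{T}g_{s}\,dA_{s}\,\Big|\,\scF_{t}}.
\]
By the representation property stated in Section~\ref{sec:pre}, there exist $Z\in\bbH^{2}_{m}$ and $N\in\bbM^{2}$ with $\Xi_{t}=\Xi_{0}+\int_{0}^{t}Z_{s}\,dM_{s}+N_{t}$. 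Setting $Y_{t}:=\Xi_{t}-\int_{0}^{t}g_{s}\,dA_{s}$ yields a triple $(Y,Z,N)\in\bbH^{2}\times\bbH^{2}_{m}\times\bbM^{2}$ that solves \eqref{stdBSDEeq} with $f(s,Y_{s-},Z_{s}m_{s})$ replaced by $g_{s}$; the bound $Y\in\bbS^{2}$ follows from Doob's inequality applied to $\Xi$ and the Cauchy--Schwarz estimate $\sup_{t}\abs{\int_{0}^{t}g\,dA}\leq\sqrt{K}\,(\int_{0}^{T}\abs{g}^{2}dA)^{1/2}$.

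\textbf{Step 2 (the contraction).} This defines a map $\Phi(y,z):=(Y,Z)$ on $\bbH^{2}\times\bbH^{2}_{m}$, which I would equip with the weighted norm $\norm{(y,z)}_{\beta}^{2}:=\bbE\int_{0}^{T}e^{\beta A_{s}}\brak{\abs{y_{s-}}^{2}+\abs{z_{s}m_{s}}^{2}}\,dA_{s}$; this is equivalent to the unweighted one since $1\leq e^{\beta A_{s}}\leq e^{\beta K}$. For two inputs $(y^{i},z^{i})$ with outputs $(Y^{i},Z^{i},N^{i})$, write $\delta Y=Y^{1}-Y^{2}$, and likewise $\delta Z,\delta N,\delta g,\delta y,\delta z$, and apply \Ito's formula to $e^{\beta A_{t}}\abs{\delta Y_{t}}^{2}$. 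Because $A$ is continuous and $[\delta N,M]=0$, the quadratic-variation contribution is $\int e^{\beta A_{s}}\brak{\abs{\delta Z_{s}m_{s}}^{2}\,dA_{s}+d[\delta N,\delta N]_{s}}$, and $\delta Y_{T}=0$. Taking expectations (justified by the $\bbS^{2}/\bbM^{2}$ integrability from Step~1) and discarding the nonnegative $\bbE\abs{\delta Y_{0}}^{2}$ and $\bbE\int e^{\beta A}d[\delta N,\delta N]$ terms gives
\[
\bbE\int_{0}^{T}e^{\beta A_{s}}\brak{\beta\abs{\delta Y_{s-}}^{2}+\abs{\delta Z_{s}m_{s}}^{2}}dA_{s}\leq\bbE\int_{0}^{T}2e^{\beta A_{s}}\,\delta Y_{s-}\cdot\delta g_{s}\,dA_{s}.
\]
Bounding the right-hand side with $\abs{\delta g_{s}}\leq C_{y}\abs{\delta y_{s-}}+C_{z}\abs{\delta z_{s}m_{s}}$ and Young's inequality, then choosing $\beta$ large, yields $\norm{(\delta Y,\delta Z)}_{\beta}^{2}\leq\rho\,\norm{(\delta y,\delta z)}_{\beta}^{2}$ with $\rho<1$, so $\Phi$ is a contraction.

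\textbf{Step 3 (conclusion).} By the Banach fixed-point theorem $\Phi$ has a unique fixed point $(Y,Z)$, the accompanying $N$ from Step~1 is then determined, and this triple is the unique solution of \eqref{stdBSDEeq} with $Y\in\bbS^{2}$. The step I expect to be the main obstacle is the energy estimate in Step~2: one must carefully account for the jump martingale $N$ (this is precisely where $[N,M]=0$ and the c\`adl\`ag/predictable structure enter), justify the localization needed to pass to expectations in the square-integrable setting, and calibrate a single weight $\beta$ that simultaneously absorbs both the $C_{y}$- and $C_{z}$-terms and forces the contraction constant strictly below one.
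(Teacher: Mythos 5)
Your proposal is correct and follows essentially the same route as the paper's own proof: a frozen-driver linear solve via the conditional-expectation martingale and the representation $\int Z\,dM+N$, followed by a contraction estimate from \Ito's formula applied to $e^{\beta A_t}|\delta Y_t|^2$ with Young's inequality and a large weight (the paper takes $a=2|C_y\vee C_z|^2+2$ and contracts on the triple space $\bbH^2_a\times\bbH^2_{m,a}\times\bbM^2_a$, but since the driver does not depend on $n$ your contraction in $(y,z)$ alone is an immaterial difference). The technical points you flag in Step~2 are handled in the paper exactly as you anticipate: the martingale property of the stochastic-integral terms needed to take expectations is supplied by the $\bbS^2/\bbH^2_m/\bbM^2$ integrability (Lemma \ref{mtg}), and the $[\delta N,\delta N]$ term enters with a favorable sign and is retained rather than discarded.
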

\begin{Remark}
Note that $f(s,Y_{s-},Z_{s})$ may not be predictable. Therefore, the
integral with respect to $A$ should be
interpreted as Lebesgue-Stieltjes integral.
\end{Remark}

\begin{proof}
Let us define the following Banach
  spaces:
\begin{itemize}
\item[$\bbH_{a}^{2}$:]all $\mathbb{R}^d$-valued c\`adl\`ag adapted processes
$(Y_t)_{0 \le t \le T}$ satisfying
$
\|Y\|_{\mathbb{H}_{a}^2}: =  \bbE\int_{0}^{T}e^{aA_{t}}|Y_{t-}|^{2}dA_{t} < \infty.
$

\item[$\bbH_{m,a}^{2}$:]all $\mathbb{R}^d$-valued predictable processes
$(Z_t)_{0 \le t \le T}$ satisfying
$
\|Z\|_{\mathbb{H}^2_{m,a}}: =  \bbE\int_{0}^{T}e^{aA_{t}}|Z_{t}m_{t}|^{2}dA_{t} < \infty.
$
\item[$\bbM^{2}_{a}$:]all c\`adl\`ag martingale
$(N_t)_{0 \le t \le T}$ satisfying
$
\|N\|_{\mathbb{M}_{a}^2}: = \bbE\int_{0}^{T}e^{aA_{s}}d{\rm
  tr}[N,N]_{t} < \infty$ and $N_{0}=0$.
\end{itemize}
For $a=2\abs{C_{y}\vee C_{z}}^{2}+2$, we will use contraction mapping
theorem for \[\phi:(y,z,n)\in\bbH_{a}^{2}\times \bbH_{m,a}^{2}\times
\bbM_{a}^{2}\mapsto (Y,Z,N)\in\bbH_{a}^{2}\times \bbH^{2}_{m,a}\times
\bbM_{a}^{2}.\] where $(Y,Z,N)$ is
 given by the solution of BSDE
\[
Y_{t}=\xi+\int_{t}^{T}f(s,y_{s-},z_{s}m_{s})dA_{s}-\int_{t}^{T}Z_{s}dM_{s}-N_{T}+N_{t},
\]
or equivalently, 
\[
Y_{0}+\int_{0}^{t}Z_{s}dM_{s}+N_{t}=\bbE_t\edg{\xi+\int_{0}^{T}f(s, y_{s-},z_{s}m_{s})dA_{s}}
\]
Then, since
$e^{aA_{s}}$ is between $1$ and $e^{aK}$, the space $\bbH_{a}^{2}\times \bbH_{m,a}^{2}\times
\bbM_{a}^{2}$ is equivalent to
$\bbH^{2}\times\bbH_{m}^{2}\times\bbM^{2}$ and the fixed point we get by
contraction mapping theorem is
the unique solution in $\bbH^{2}\times\bbH^{2}_{m}\times\bbM^{2}$.

First, let us show that $\phi(y,z,n)=(Y,Z,N)$ is in
$\bbH^{2}\times\bbH_{m}^{2}\times\bbM^{2}$ and therefore in $\bbH_{a}^{2}\times \bbH_{m,a}^{2}\times
\bbM_{a}^{2}$. From Theorem 27 Corollary 3 of II.6 of
Protter (2004, \cite{Protter:2004wf}),
\begin{align*}
\bbE\abs{\bbE_t\edg{\xi+\int_{0}^{T}f(s, y_{s-},z_{s}m_{s})dA_{s}}}^{2} \leq\bbE\abs{\xi+\int_{0}^{T}f(s, y_{s-},z_{s}m_{s})dA_{s}}^{2}<\infty
\end{align*}
for all $t\in[0,T]$ implies
\begin{align*}
\norm{Z}^{2}_{\bbH_{m}^{2}}+\norm{N}^{2}_{\bbM^{2}}=\bbE{\rm tr}\edg{\int_{0}^{\cdot}Z_{s}dM_{s}+N,\int_{0}^{\cdot}Z_{s}dM_{s}+N}_{T}<\infty.
\end{align*}
On the other hand,
\begin{align*}
|Y_{t}|\leq |\xi|+\int_{0}^{T}|f(s,y_{s-},z_{s}m_{s})|dA_{s}+\sup_{t\in[0,T]}\abs{\int_{t}^{T}Z_{s}dM_{s}+N_{T}-N_{t}}.
\end{align*}
From Burkholder-Davis-Gundy inequality, for some constant $C'$,
\begin{align*}
\bbE\sup_{t\in[0,T]}\abs{\int_{t}^{T}Z_{s}dM_{s}+N_{T}-N_{t}}^{2}&\leq
  2\bbE\abs{\int_{0}^{T}Z_{s}dM_{s}+N_{T}}^{2}+2\bbE\sup_{t\in[0,T]}\abs{\int_{0}^{t}Z_{s}dM_{s}+N_{t}}^{2}\leq C'\brak{\norm{Z}_{\bbH_{m}^{2}}^{2}+\norm{N}_{\bbM^{2}}^{2}}\end{align*}
Therefore, $Y\in\bbS^{2}$ and this implies $Y\in\bbH^{2}$.
% By {\Ito} formula,
% \begin{align*}
% |Y_{0}|^{2}
% &=e^{2A_{T}}|\xi|^{2}+\int_{0}^{T}e^{2A_{s}}\brak{2Y^{*}_{s}f(s,y_{s},z_{s}m_{s})-|Z_{s}m_{s}|^{2}-2|Y_{s}|^{2}}dA_{s}-\int^{T}_{0}e^{2A_{s}}d{\rm tr}\edg{N,N}_{s}\\ 
% &\qquad-2\int_{0}^{T}e^{2A_{s}}
%   Y_{s}^{*}Z_{s}dM_{s}-2\int_{0}^{T}e^{2A_{s}}Y_{s}^{*}dN_{s}\\
% &\leq e^{2A_{T}}|\xi|^{2}+\int_{0}^{T}e^{2A_{s}}\brak{|f(s,y_{s},z_{s}m_{s})|^{2}-|Z_{s}m_{s}|^{2}-|Y_{s}|^{2}}dA_{s}-\int^{T}_{0}e^{2A_{s}}d{\rm tr}\edg{N,N}_{s}\\
% &\qquad-2\int_{0}^{T}e^{2A_{s}} Y_{s}^{*}Z_{s}dM_{s}-2\int_{0}^{T}e^{2A_{s}}Y_{s}^{*}dN_{s}.
% \end{align*}
% Therefore, if we take expectation on both side, we get
% \begin{align*}
% \bbE\int_{0}^{T}e^{2A_{s}}\brak{|Y_{s}|^{2}+|Z_{s}m_{s}|^{2}} dA_{s}+\bbE \int^{T}_{0}e^{2A_{s}}d{\rm tr}\edg{N,N}_{s}&\leq \bbE e^{aA_{T}}|\xi|^{2}+\bbE\int_{0}^{T}e^{2A_{s}}|f(s,y_{s},z_{s}m_{s})|^{2}dA_{s}<\infty
% \end{align*}
%Since $0\leq A_{s}\leq K$ for $s\in[0,T]$, we proved the claim.

Next, let us show the contraction. Let $(Y,Z,N):=\phi(y,z,n)$ and
$(Y',Z',N'):=\phi(y',z',n')$. Let us denote $\delta
Y_{s}:=Y_{s}-Y'_{s},\delta Z_{s}:=Z_{s}-Z'_{s}, \delta N_{s}:=N_{s}-N'_{s}$, and $\delta
f_{s}:=f(s,y_{s-},z_{s}m_{s})-f(s,y'_{s-},z'_{s}m_{s})$. Then,
\begin{align*}
\delta Y_{t}=\int_{t}^{T}\delta f_{s} dA_{s}-\int_{t}^{T}\delta
  Z_{s}dM_{s}-\delta N_{T}+\delta N_{t}.
\end{align*}
By {\Ito} formula, 
\begin{align*}
0\leq |\delta Y_{0}|^{2}&=\int_{0}^{T}e^{aA_{s}}(2\delta Y_{s-}^{*}\delta
  f_{s}-|\delta Z_{s}m_{s}|^{2}-a|\delta
                    Y_{s-}|^{2})dA_{s}-\int^{T}_{0}e^{aA_{s}}d{\rm
                    tr}\edg{\delta N,\delta N}_{s}\\
&\qquad-2\int_{0}^{T}e^{2A_{s}} \delta Y_{s-}^{*}\delta
  Z_{s}dM_{s}-2\int_{0}^{T}e^{2A_{s}}\delta Y_{s-}^{*}d\delta N_{s}\\
&\leq \int_{0}^{T}e^{aA_{s}}((a-1)|\delta Y_{s-}|^{2}+\frac{1}{a-1}|\delta
  f_{s}|^{2}-|\delta Z_{s}m_{s}|^{2}-a|\delta
                    Y_{s-}|^{2})dA_{s}-\int^{T}_{0}e^{aA_{s}}d{\rm
                    tr}\edg{\delta N,\delta N}_{s}\\
&\qquad-2\int_{0}^{T}e^{2A_{s}} \delta Y_{s-}^{*}\delta
  Z_{s}dM_{s}-2\int_{0}^{T}e^{2A_{s}}\delta Y_{s-}^{*}d\delta N_{s}.
\end{align*}
If we take expectation on both side and rearrange it, by Lemma \ref{mtg}, we get
\begin{align*}
\bbE\int_{0}^{T}e^{aA_{s}}\brak{|\delta Y_{s-}|^{2}+|Z_{s}m_{s}|^{2}}&dA_{s}+\bbE \int^{T}_{0}e^{aA_{s}}d{\rm
                    tr}\edg{\delta N,\delta N}_{s}\\
&\leq
  \frac{1}{a-1}\bbE\int_{0}^{T}e^{aA_{s}}|\delta f_{s}|^{2}dA_{s}\leq\frac{a-2}{a-1}\bbE\int_{0}^{T}e^{aA_{s}}\brak{|
  y_{s-}-y'_{s-}|^{2}+|z_{s}m_{s}-z'_{s}m_{s}|^{2}}dA_{s}
\end{align*}
and $\phi$ is a contraction on
$\bbH^{2}_{a}\times\bbH^{2}_{m,a}\times\bbM^{2}_{a}$.
Therefore, there
exists a unique fixed point, which is our solution, in
$\bbH^{2}_{a}\times\bbH^{2}_{m,a}\times\bbM^{2}_{a}$. Therefore, there is a unique solution in
$(Y,Z,N)\in\bbH^{2}\times\bbH_{m}^{2}\times\bbM^{2}$ and
$Y\in\bbS^{2}$ from the argument at the beginning of the proof.
\end{proof}

\begin{proposition}\label{bddY}
Assume {\rm (STD)}. Moreover, assume that there exist
$C_{\xi},C_{f}\in\bbR_{+}$ such that
$|\xi|\leq C_{\xi}$
and $\int_{0}^{T}|f(s,0,0)|^{2}dA_{s}\leq C_{f}^{2}$. Then,
for solution $(Y,Z,N)$ of \eqref{stdBSDEeq}, we have
$
|Y_{t}|\leq \sqrt{C_{\xi}^{2}+ C_{f}^{2}}e^{\half K(2C_{y}+C_{z}^{2}+1)}
$.
\end{proposition}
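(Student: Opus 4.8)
The plan is to run the standard a priori estimate for a quadratic Lyapunov functional, but with an exponential weight $e^{\beta A_s}$ tuned so that the Lipschitz contributions are exactly absorbed. Concretely, I would set $\beta := 2C_y + C_z^2 + 1$ and apply the \Ito\ formula to $e^{\beta A_s}\abs{Y_s}^2$ on $[t,T]$. Writing the BSDE in forward form, $dY_s = -f(s,Y_{s-},Z_sm_s)\,dA_s + Z_s\,dM_s + dN_s$, and using that $A$ is continuous of finite variation while $\abs{\cdot}^2$ is quadratic, gives
\be
d\brak{e^{\beta A_s}\abs{Y_s}^2} = e^{\beta A_s}\edg{\beta\abs{Y_{s-}}^2 - 2Y_{s-}^*f(s,Y_{s-},Z_sm_s)}dA_s + e^{\beta A_s}\,d{\rm tr}[Y,Y]_s + d(\text{mart.}).
\ee
Since $[N,M]=0$ and the $dA_s$-drift carries no quadratic variation, the covariation splits as $d{\rm tr}[Y,Y]_s = \abs{Z_sm_s}^2\,dA_s + d{\rm tr}[N,N]_s$.

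Next I would integrate from $t$ to $T$ and take the conditional expectation $\bbE_t$. The two stochastic-integral terms (against $dM$ and $dN$) are genuine martingales because $(Y,Z,N)\in\bbS^2\times\bbH^2_m\times\bbM^2$ and $e^{\beta A}$ is bounded, so their conditional expectations vanish; this is precisely the role of Lemma \ref{mtg}. Dropping the nonnegative term $\bbE_t\int_t^T e^{\beta A_s}d{\rm tr}[N,N]_s\ge 0$ then yields
\be
e^{\beta A_t}\abs{Y_t}^2 \le \bbE_t\edg{e^{\beta A_T}\abs{\xi}^2} - \bbE_t\int_t^T e^{\beta A_s}\brak{\beta\abs{Y_{s-}}^2 - 2Y_{s-}^*f(s,Y_{s-},Z_sm_s) + \abs{Z_sm_s}^2}dA_s.
\ee

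The heart of the argument is a pointwise lower bound on the bracket. Using $\abs{f(s,Y_{s-},Z_sm_s)}\le \abs{f(s,0,0)} + C_y\abs{Y_{s-}} + C_z\abs{Z_sm_s}$ together with the Young inequalities $2\abs{Y_{s-}}\abs{f(s,0,0)}\le \abs{Y_{s-}}^2 + \abs{f(s,0,0)}^2$ and $2C_z\abs{Y_{s-}}\abs{Z_sm_s}\le C_z^2\abs{Y_{s-}}^2 + \abs{Z_sm_s}^2$, the $\abs{Z_sm_s}^2$ terms cancel and the coefficient of $\abs{Y_{s-}}^2$ becomes $\beta - 1 - 2C_y - C_z^2 = 0$ by the choice of $\beta$. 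Hence the bracket is bounded below by $-\abs{f(s,0,0)}^2$, so the last integral is at most $\bbE_t\int_t^T e^{\beta A_s}\abs{f(s,0,0)}^2\,dA_s$. Finally I would bound $e^{\beta A_s}\le e^{\beta K}$ (as $A_s\le A_T\le K$), invoke the two boundedness hypotheses $\abs{\xi}\le C_\xi$ and $\int_0^T\abs{f(s,0,0)}^2dA_s\le C_f^2$, and divide by $e^{\beta A_t}\ge 1$ to obtain $\abs{Y_t}^2\le e^{\beta K}(C_\xi^2 + C_f^2)$, which is exactly the claimed estimate after taking square roots.

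I expect the only genuine subtlety — as opposed to the algebra, which is mechanical once $\beta$ is fixed — to be the justification that the $dM$- and $dN$-integrals have vanishing conditional expectation. One must confirm the integrands are square-integrable enough for these to be true martingales rather than merely local ones, which follows from $Y\in\bbS^2$, $Z\in\bbH^2_m$, $N\in\bbM^2$ and the boundedness of $e^{\beta A}$; otherwise a localizing sequence of stopping times together with a dominated-convergence passage would be required. Everything else, namely the \Ito\ expansion, the covariation split, and the quadratic estimates, is routine.
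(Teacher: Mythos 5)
Your proposal is correct and follows essentially the same route as the paper's proof: the same exponential weight $e^{aA_s}$ with $a=2C_{y}+C_{z}^{2}+1$, the same {\Ito} expansion with the covariation split into $|Z_{s}m_{s}|^{2}\,dA_{s}$ and $d\,{\rm tr}[N,N]_{s}$, the same Young-inequality absorption giving the bound $2Y_{s-}^{*}f(s,Y_{s-},Z_{s}m_{s})\leq |f(s,0,0)|^{2}+(2C_{y}+C_{z}^{2}+1)|Y_{s-}|^{2}+|Z_{s}m_{s}|^{2}$, and the same appeal to Lemma \ref{mtg} (with $Y\in\bbS^{2}$ from Proposition \ref{stdbsde}) to kill the stochastic integrals under $\bbE_{t}$. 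Your closing remark correctly identifies the martingale justification as the only nontrivial point, and it is handled exactly as in the paper.
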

\begin{proof}
 Since (STD) are satisfied, there exists a unique solution
$(Y,Z,N)\in\bbH^{2}\times\bbH_{m}^{2}\times\bbM^{2}$. By {\Ito} formula,
when $a=2C_{y}+C_{z}^{2}+1$,
we have
\begin{align*}
e^{aA_{t}}|Y_{t}|^{2}
&=e^{aA_{T}}|\xi|^{2}+\int_{t}^{T}e^{aA_{s}}\brak{2Y^{*}_{s-}f(s,Y_{s-},Z_{s}m_{s})-|Z_{s}m_{s}|^{2}-a|Y_{s-}|^{2}}dA_{s}-\int^{T}_{t}e^{aA_{s}}d{\rm tr}\edg{N,N}_{s}\\ 
&\qquad-2\int_{t}^{T}e^{aA_{s}}
  Y_{s-}^{*}Z_{s}dM_{s}-2\int_{t}^{T}e^{aA_{s}}Y_{s-}^{*}dN_{s}\\
&\leq e^{aA_{T}}|\xi|^{2}+\int_{0}^{T}e^{aA_{s}}|f(s,0,0)|^{2}dA_{s}-2\int_{t}^{T}e^{aA_{s}} Y_{s-}^{*}Z_{s}dM_{s}-2\int_{t}^{T}e^{aA_{s}}Y_{s-}^{*}dN_{s}.
\end{align*}
because
\begin{align*}
2Y^{*}_{s-}f(s,Y_{s-},Z_{s}m_{s})\leq |f(s,0,0)|^{2}+(2C_{y}+C_{z}^{2}+1)|Y_{s-}|^{2}+|Z_{s}m_{s}|^{2}.
\end{align*}
If we take $\bbE(\cdot |\scF_{t})$ on both side, by Lemma \ref{mtg}, we get
\begin{align*}
|Y_{t}|^{2}\leq\bbE_t\edg
{  e^{aA_{T}}|\xi|^{2}}+\bbE_t\edg{e^{aA_{T}}\int_{0}^{T}|f(s,0,0)|^{2}dA_{s}}\leq
(C_{\xi}^{2}+C_{f}^{2})  e^{K(2C_{y}+C_{z}^{2}+1)}
\end{align*}
\end{proof}
\begin{proposition}\label{prop:stab}(Stability) Assume that $(\xi,f)$
  satisfies {\rm (STD)} with Lipschitz coefficients $C_{y}$ and $C_{z}$. Also
  assume that $(\bar\xi,\bar f)$ satisfies {\rm (STD)} possibly with different Lipschitz coefficients. Let $(Y,Z,N)$ and
  $(\bar Y,\bar Z,\bar N)$ are solutions of 
\begin{align*}
Y_{t}&=\xi+\int_{t}^{T}f(s,
  Y_{s-},Z_{s}m_{s})dA_{s}-\int_{t}^{T}Z_{s}dM_{s}-N_{T}+N_{t}\\
\bar Y_{t}&=\bar\xi+\int_{t}^{T}\bar f(s, \bar Y_{s-},\bar Z_{s}m_{s})dA_{s}-\int_{t}^{T}\bar Z_{s}dM_{s}-\bar N_{T}+\bar N_{t},
\end{align*}
Then, we have the following estimate:
\begin{align*}
\norm{Y_{t}-\bar Y_{t}}^{2}_{2}+\norm{Y-\bar Y}^{2}_{\bbH^{2}}+\norm{Z-\bar
  Z}^{2}_{\bbH_{m}^{2}}&+\norm{N-\bar N}^{2}_{\bbM^{2}}\\
&\leq
  2e^{K(2C_{y}+2C_{z}^{2}+2)}\brak{\norm{\xi-\bar\xi}^{2}_{2}+\norm{f(\cdot,\bar
  Y_{\cdot},\bar Z_{\cdot}m_{\cdot})- \bar f(\cdot,\bar
  Y_{\cdot},\bar Z_{\cdot}m_{\cdot})}^{2}_{\bbH^{2}}}
\end{align*}
\end{proposition}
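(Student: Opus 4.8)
The plan is to run the same weighted-\Ito\ energy argument as in the proofs of Propositions \ref{stdbsde} and \ref{bddY}, applied to the difference of the two solutions. Write $\delta Y:=Y-\bar Y$, $\delta Z:=Z-\bar Z$, $\delta N:=N-\bar N$ and $\delta\xi:=\xi-\bar\xi$; subtracting the two equations shows that $\delta Y$ solves
\begin{align*}
\delta Y_{t}=\delta\xi+\int_{t}^{T}\brak{f(s,Y_{s-},Z_{s}m_{s})-\bar f(s,\bar Y_{s-},\bar Z_{s}m_{s})}dA_{s}-\int_{t}^{T}\delta Z_{s}dM_{s}-\delta N_{T}+\delta N_{t}.
\end{align*}
The first and decisive step is to split the driver increment as $f(s,Y_{s-},Z_{s}m_{s})-\bar f(s,\bar Y_{s-},\bar Z_{s}m_{s})=\delta f_{s}+\delta g_{s}$, where $\delta f_{s}:=f(s,Y_{s-},Z_{s}m_{s})-f(s,\bar Y_{s-},\bar Z_{s}m_{s})$ and $\delta g_{s}:=f(s,\bar Y_{s-},\bar Z_{s}m_{s})-\bar f(s,\bar Y_{s-},\bar Z_{s}m_{s})$. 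By the Lipschitz assumption on $f$ the first piece is controlled by $\abs{\delta f_{s}}\le C_{y}\abs{\delta Y_{s-}}+C_{z}\abs{\delta Z_{s}m_{s}}$, while $\delta g_{s}$ is exactly the generator mismatch whose $\bbH^{2}$-norm is to appear on the right-hand side of the claim.

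Next I would apply the \Ito\ formula to $e^{aA_{t}}\abs{\delta Y_{t}}^{2}$ with $a:=2C_{y}+2C_{z}^{2}+2$. Since $M$ is continuous and $[N,M]=0$, the martingale part of $\delta Y$ has quadratic variation $\abs{\delta Z_{s}m_{s}}^{2}dA_{s}+d{\rm tr}[\delta N,\delta N]_{s}$ with no cross term, so the computation is formally the one in Proposition \ref{bddY} but with the generator replaced by $\delta f_{s}+\delta g_{s}$ and with $\delta Y_{s-}$ in the integrands. The finite-variation drift $2\delta Y_{s-}^{*}(\delta f_{s}+\delta g_{s})$ is then dominated by weighted Young inequalities, namely $2\delta Y_{s-}^{*}\delta f_{s}\le(2C_{y}+2C_{z}^{2})\abs{\delta Y_{s-}}^{2}+\half\abs{\delta Z_{s}m_{s}}^{2}$ and $2\delta Y_{s-}^{*}\delta g_{s}\le\abs{\delta Y_{s-}}^{2}+\abs{\delta g_{s}}^{2}$. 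The choice of $a$ is made precisely so that, after the $-a\abs{\delta Y_{s-}}^{2}$ and $-\abs{\delta Z_{s}m_{s}}^{2}$ terms produced by \Ito\ are used, a strictly positive remainder $\abs{\delta Y_{s-}}^{2}+\half\abs{\delta Z_{s}m_{s}}^{2}$ survives on the left and only $\abs{\delta g_{s}}^{2}$ is left on the right.

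I would then take expectations and invoke Lemma \ref{mtg} to discard the two stochastic integrals against $M$ and against $\delta N$, bound $1\le e^{aA_{s}}\le e^{aK}$ on the left- and right-hand sides respectively (using $0\le A_{s}\le A_{T}\le K$), and read off from the resulting energy inequality both a pointwise bound for $\norm{\delta Y_{t}}_{2}^{2}$ (discarding the nonnegative tail integrals) and bounds for $\norm{\delta Y}_{\bbH^{2}}^{2}$, $\norm{\delta Z}_{\bbH^{2}_{m}}^{2}$ and $\norm{\delta N}_{\bbM^{2}}^{2}$. Collecting these terms, with the overall factor $2$ clearing the $\half$-weight that Young's inequality left on $\norm{\delta Z}_{\bbH^{2}_{m}}^{2}$, yields
\begin{align*}
\norm{\delta Y_{t}}_{2}^{2}+\norm{\delta Y}_{\bbH^{2}}^{2}+\norm{\delta Z}_{\bbH^{2}_{m}}^{2}+\norm{\delta N}_{\bbM^{2}}^{2}\le 2e^{K(2C_{y}+2C_{z}^{2}+2)}\brak{\norm{\delta\xi}_{2}^{2}+\norm{\delta g}_{\bbH^{2}}^{2}},
\end{align*}
and $\norm{\delta g}_{\bbH^{2}}^{2}$ is by definition $\norm{f(\cdot,\bar Y_{\cdot},\bar Z_{\cdot}m_{\cdot})-\bar f(\cdot,\bar Y_{\cdot},\bar Z_{\cdot}m_{\cdot})}_{\bbH^{2}}^{2}$, which is the claim.

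The step I expect to be most delicate is not the algebra but the passage to expectation: the two stochastic integrals are a priori only local martingales, so to drop them I must first know they are genuine martingales. This is exactly where the regularity established in Proposition \ref{stdbsde} is needed — $\delta Y\in\bbS^{2}$ together with $\delta Z\in\bbH^{2}_{m}$ and $\delta N\in\bbM^{2}$ supplies, via Burkholder--Davis--Gundy, the integrability that makes Lemma \ref{mtg} applicable. The remaining work, the bookkeeping of the Young constants that fixes the exponent $2C_{y}+2C_{z}^{2}+2$ and the factor $2$, is routine.
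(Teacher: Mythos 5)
Your proposal is correct and follows essentially the same route as the paper: your splitting $\delta f_{s}+\delta g_{s}$ is exactly the paper's repackaged driver $g(s,y,z):=f(s,\bar Y_{s-}+y,\bar Z_{s}m_{s}+z)-\bar f(s,\bar Y_{s-},\bar Z_{s}m_{s})$ (note $g(s,0,0)=\delta g_{s}$), and the weighted \Ito\ computation on $e^{aA_{t}}\abs{\delta Y_{t}}^{2}$ with $a=2C_{y}+2C_{z}^{2}+2$, the identical Young inequalities, and the appeal to Lemma \ref{mtg} (justified by $\delta Y\in\bbS^{2}$, $\delta Z\in\bbH^{2}_{m}$, $\delta N\in\bbM^{2}$) reproduce the paper's proof step for step. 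Even your final constant-bookkeeping, using the factor $2$ to absorb the residual $\half$ on the $\norm{\delta Z}^{2}_{\bbH^{2}_{m}}$ term, mirrors the paper's own collection of terms.
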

\begin{proof}
Denote $\delta Y:=Y-\bar Y,\;
\delta Z:=Z-\bar Z,\;
\delta N:=N-\bar N,\;\delta\xi:=\xi-\bar\xi$, and 
\begin{align*}
g(s,y,z):=f(s,\bar Y_{s-}+y,\bar Z_{s}m_{s}+z)-\bar f(s,\bar Y_{s-},\bar Z_{s}m_{s}).
\end{align*}
Then, we have
\begin{align*}
\delta Y_{t}=\delta\xi+\int_{t}^{T}g(s,\delta Y_{s-},\delta Z_{s}m_{s}) dA_{s}-\int_{t}^{T}\delta
  Z_{s}dM_{s}-\delta N_{T}+\delta N_{t}.
\end{align*}
where $(\delta\xi, g)$ satisfies (STD).
By applying {\Ito}
formula on $e^{aA_{t}}|\delta Y_{t}|^{2}$ where $a=2C_{y}+2C^{2}_{z}+2$, we have
\begin{align*}
 e^{aA_{t}}|\delta
  Y_{t}|^{2}=e^{aA_{T}}|\delta\xi|^{2}&+\int_{t}^{T}e^{aA_{s}}(2\delta Y_{s-}^{*}g(s,\delta Y_{s-},\delta Z_{s}m_{s})-a|\delta Y_{s-}|^{2}-|\delta
  Z_{s}m_{s}|^{2})dA_{s}\\
&-\int_{t}^{T}e^{aA_{s}}d{\rm tr}[\delta
  N,\delta N]_{s}
-\int_{t}^{T}2e^{aA_{s}}\delta
  Y^{*}_{s-}Z_{s}dM_{s}-\int_{t}^{T}2e^{aA_{s}}\delta Y^{*}_{s-}dN_{s}.
\end{align*}
and this implies, by Lemma \ref{mtg},
\begin{align*}
\bbE e^{aA_{t}}|\delta
  Y_{t}|^{2}&+\bbE\int_{t}^{T}e^{aA_{s}}|\delta
  Y_{s-}|^{2}dA_{s}+\half\bbE\int_{t}^{T}e^{aA_{s}}|\delta
  Z_{s}m_{s}|^{2}dA_{s} +\bbE \int_{t}^{T}e^{aA_{s}}d{\rm tr}[\delta
  N,\delta N]_{s}\\
&\leq
\bbE  e^{aA_{T}}|\delta\xi|^{2}+\bbE\int_{t}^{T}e^{aA_{s}}|g(s,0,0)|^{2}dA_{s}\leq e^{K(2C_{y}+2C_{z}^{2}+2)}\brak{\bbE |\delta\xi|^{2}+\bbE\int_{0}^{T}|g(s,0,0)|^{2}dA_{s}}
\end{align*}
since
\begin{align*}
2\delta Y_{s-}^{*}(g(s,\delta Y_{s-},\delta Z_{s}m_{s}))&\leq
                                                                  |g(s,0,0)|^{2}+(2C_{y}+2C_{z}^{2}+1)|\delta
               Y_{s-}|^{2}+\half|\delta Z_{s}m_{s}|^{2}.
\end{align*}
Therefore,
\begin{align*}
\sup_{t\in[0,T]}\bbE|\delta
  Y_{t}|^{2}&+\bbE\int_{0}^{T}|\delta
  Y_{s-}|^{2}dA_{s}+\bbE\int_{0}^{T}|\delta
  Z_{s}m_{s}|^{2}dA_{s} +\bbE {\rm tr}[\delta
  N,\delta N]_{T}\\
&\leq 2\brak{ \sup_{t\in[0,T]}\bbE e^{aA_{t}}|\delta
  Y_{t}|^{2}+\bbE\int_{0}^{T}e^{aA_{s}}|\delta
  Y_{s-}|^{2}dA_{s}+\half\bbE\int_{0}^{T}e^{aA_{s}}|\delta
  Z_{s}m_{s}|^{2}dA_{s} +\bbE \int_{0}^{T}e^{aA_{s}}d{\rm tr}[\delta
  N,\delta N]_{s}}\\
&\leq 2e^{K(2C_{y}+2C_{z}^{2}+2)}\brak{\bbE |\delta\xi|^{2}+\bbE\int_{0}^{T}|g(s,0,0)|^{2}dA_{s}}.
\end{align*}
% Here we define
% \begin{align*}
% P^{i}_{s}&:=\frac{f(s,M_{[0,s]},Y^{bar, i}_{s},Z_{s})-
%   \bar f(s,M_{[0,s]},Y^{bar, i+1}_{s},Z_{s})}{\delta Y^{i}_{s}}\\
% Q^{ij}_{s}&:=\frac{f\brak{s,M_{[0,s]},\bar
%             Y_{s},Z^{bar, ij}_{s}}-
%   \bar f\brak{s,M_{[0,s]},\bar Y_{s}, Z^{bar,i(j+1)}_{s}}}{\delta Z^{ij}_{s}}
% \end{align*}
% where
% \begin{align*}
% Y^{bar, i}&:=(\bar Y^{1},\cdots,\bar Y^{i-1}, Y^{i},Y^{i+1},\cdots Y^{d})\\
% Z^{bar, ij}&:=\begin{pmatrix}\bar Z^{11}&&&\cdots&&&\bar
%               Z^{1n}\\\vdots&&&&&&\vdots \\\bar
%               Z^{i1}&\cdots&\bar Z^{i(j-1)}&Z^{ij}&Z^{i(j+1)}&\cdots&Z^{in} \\\vdots&&&&&&\vdots\\ Z^{d1}&&&\cdots&&&Z^{dn}\end{pmatrix}
% \end{align*}
% and we define $\frac{0}{0}=0$. 
\end{proof}
Now let us prove the comparison theorem when $d=1$. This result will be
used in Section 4.2. We will denote $\scE(X)=\exp\brak{X-\half[X,X]}$. 
%Before we prove it, we need the Girsanov theorem
%for continuous local martingale.
%
%\begin{theorem}[Kazamaki (1994, \cite{Kazamaki:1994ux})]
%For a continuous local martingale $X$, assume that $\scE(X)=\exp\brak{X-\half[X,X]}$ is a uniformly integrable martingale. Let $d\tilde\bbP:=\scE(X)_{T}d\bbP$.
%For any continuous $\bbP$-local martingale $M$, the process $\tilde M:=M-[X,M]$
%is a continuous $\tilde\bbP$-local martingale and $[M,M]=[\tilde
%M,\tilde M]$ under either probability measure.
%\end{theorem}
\begin{theorem}\label{compthm}(Comparison Theorem) Let $d=1$ and assume that $m_{t}$
  is invertible for all $t\in[0,T]$.
Assume {\rm (STD)} for $(\xi, f)$ and $(\bar \xi,\bar  f)$. Let
$(Y,Z,N)$ and $(\bar  Y,\bar  Z,\bar  N)$ be the solutions of
\begin{align*}
Y_{t}=\xi+\int_{t}^{T}f(s,Y_{s-},Z_{s}m_{s})dA_{s}-\int_{t}^{T}Z_{s}dM_{s}-N_{T}+N_{t}\\
\bar  Y_{t}=\bar \xi+\int_{t}^{T}\bar  f(s, \bar  Y_{s-},\bar  Z_{s}m_{s})dA_{s}-\int_{t}^{T}\bar  Z_{s}dM_{s}-\bar  N_{T}+\bar  N_{t}.
\end{align*}
If $\xi\leq \bar \xi$
almost surely
and $f(t,y,z)\leq \bar  f(t,y,z)$ $dt\otimes
d\bbP\otimes dy\otimes dz$-almost everywhere, then $Y_{t}\leq \bar
Y_{t}$ a.s. for all $t\in[0,T]$
\end{theorem}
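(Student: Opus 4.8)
The plan is to linearise the equation for the difference and then remove the resulting $y$- and $z$-drifts by an exponential integrating factor and an equivalent change of measure, after which the sign of the data forces $\delta Y_t:=Y_t-\bar Y_t\le0$. First I would set $\delta Y:=Y-\bar Y$, $\delta Z:=Z-\bar Z$, $\delta N:=N-\bar N$ and $\delta\xi:=\xi-\bar\xi\le0$, and decompose the driver difference through the intermediate values $f(s,\bar Y_{s-},Z_sm_s)$ and $f(s,\bar Y_{s-},\bar Z_sm_s)$. Since $f$ is Lipschitz in $(y,z)$, this yields
\[
f(s,Y_{s-},Z_sm_s)-\bar f(s,\bar Y_{s-},\bar Z_sm_s)=a_s\,\delta Y_{s-}+b_s\,(\delta Z_sm_s)+g_s,
\]
where $a_s$ and the row vector $b_s$ are predictable difference quotients with $|a_s|\le C_y$ and $|b_s|\le C_z$, while $g_s:=f(s,\bar Y_{s-},\bar Z_sm_s)-\bar f(s,\bar Y_{s-},\bar Z_sm_s)\le0$ by the hypothesis $f\le\bar f$. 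Thus $\delta Y$ solves a linear BSDE with driver $a_s\delta Y_{s-}+b_s(\delta Z_sm_s)+g_s$, terminal value $\delta\xi$ and orthogonal martingale part $\delta N$.

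The heart of the argument, and the step I expect to be the main obstacle, is absorbing the $z$-drift by a change of measure, because the natural density involves $m_s^{-1}$, which need not be bounded even though $|m_s|=1$. Using invertibility of $m_s$, set $\theta_s:=b_sm_s^{-1}$ and $L:=\int_0^\cdot\theta_s\,dM_s$. The saving observation is that the covariation weighting tames $\theta$: since $\theta_sm_sm_s^*\theta_s^*=b_sb_s^*=|b_s|^2$, one has $[L,L]_T=\int_0^T|b_s|^2\,dA_s\le C_z^2K$, which is bounded. Hence Novikov's criterion applies, $\scE(L)$ is a true (uniformly integrable) martingale, and I may define $\bbQ$ by $d\bbQ/d\bbP=\scE(L)_T$. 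By Girsanov, $M^{\bbQ}:=M-\edg{M,L}$ is a continuous $\bbQ$-local martingale with $\edg{M^{\bbQ},M^{\bbQ}}=\edg{M,M}$; since $d\edg{M,L}_s=m_sm_s^*\theta_s^*\,dA_s=m_sb_s^*\,dA_s$, the $z$-drift satisfies $\int_t^T b_s(\delta Z_sm_s)\,dA_s=\int_t^T\delta Z_s\,d\edg{M,L}_s$ and is exactly absorbed into $\int_t^T\delta Z_s\,dM_s$; moreover $\edg{\delta N,L}=0$ because $\edg{\delta N,M}=0$, so $\delta N$ remains a $\bbQ$-local martingale orthogonal to $M^{\bbQ}$. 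Consequently, under $\bbQ$, $\delta Y$ solves the linear BSDE $\delta Y_t=\delta\xi+\int_t^T(a_s\delta Y_{s-}+g_s)\,dA_s-\int_t^T\delta Z_s\,dM^{\bbQ}_s-\delta N_T+\delta N_t$.

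Finally I would introduce the bounded integrating factor $E_{t,s}:=\exp\!\brak{\int_t^s a_r\,dA_r}\in(0,e^{C_yK}]$ and apply {\Ito}'s formula to $E_{0,s}\,\delta Y_s$ under $\bbQ$; the $a$-drift cancels and, taking $\bbE^{\bbQ}\edg{\,\cdot\mid\scF_t}$,
\[
E_{0,t}\,\delta Y_t=\bbE^{\bbQ}\edg{E_{0,T}\,\delta\xi+\int_t^T E_{0,s}\,g_s\,dA_s\;\Big|\;\scF_t}.
\]
Both terms on the right are nonpositive because $E>0$, $\delta\xi\le0$ and $g\le0$, so $E_{0,t}\delta Y_t\le0$ and hence $Y_t\le\bar Y_t$ almost surely for every $t$. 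The only point left to check in this last step is that $\int E_{0,s}\,\delta Z_s\,dM^{\bbQ}_s$ and $\int E_{0,s}\,d\delta N_s$ are genuine $\bbQ$-martingales so that the conditional expectation is legitimate; this follows from the boundedness of $E$, the $\bbL^p(\bbP)$-integrability of $\scE(L)_T$ (again from $[L,L]_T\le C_z^2K$), and $Z\in\bbH^2_m$, $N\in\bbM^2$, after a routine localization by the hitting times of $\int_0^\cdot|\delta Z_sm_s|^2\,dA_s$ if needed.
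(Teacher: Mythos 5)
Your proof is correct and takes essentially the same route as the paper: the paper linearizes identically (with component-wise difference quotients $F_s$, $G^i_s$ in place of your $a_s$, $b_s$) and merely packages your integrating factor and Girsanov density into the single adjoint process $\Gamma_s=\scE\brak{\int_0^\cdot F_r\,dA_r+\int_0^\cdot G_r^*m_r^{-1}\,dM_r}_s=E_{0,s}\,\scE(L)_s$, whose Novikov justification rests on the identical observation that $\int_0^T|G_s|^2\,dA_s\leq nC_z^2K$ is bounded (note your bound $|b_s|\leq C_z$ should be $\sqrt{n}\,C_z$ for the assembled vector, which changes nothing). Taking $\bbE^{\bbQ}\edg{\,\cdot\mid\scF_t}$ in your formulation is, by Bayes' rule, exactly the paper's representation $\delta Y_t=\Gamma_t^{-1}\bbE_t\edg{\Gamma_T\delta\xi+\int_t^T\Gamma_s\,\delta f_s\,dA_s}$, with the same integrability checks on the martingale parts.
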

\begin{proof}
Let us denote \begin{align*}
\delta\xi&:=\xi-\bar \xi&\delta f_{s}&:=f(s,\bar  Y_{s-},\bar  Z_{s}m_{s})-\bar  f(s,\bar  Y_{s-},\bar  Z_{s}m_{s})\\
\delta Y_{s}&:=Y_{s}-\bar  Y_{s}, &\delta Z_{s}&:=Z_{s}-\bar  Z_{s},\qquad\qquad \qquad\qquad \qquad\qquad\delta
                                                           N_{s}:=N_{s}-\bar  N_{s}
\end{align*}
and
\begin{align*}
(Zm)^{(i)}&:=((\bar  Zm)^{1},(\bar Zm)^{2},\cdots, (\bar Zm)^{i},(Zm)^{i+1},(Zm)^{i+2},\cdots, (Zm)^{n})\\
F_{s}&:=\frac{f(s,Y_{s-},Z_{s}m_{s})-f(s,\bar  Y_{s-},Z_{s}m_{s})}{\delta Y_{s-}},\qquad
G^{i}_{s}:=\frac{f(s,\bar  Y_{s-},(Zm)^{(i-1)}_{s})-f(s,\bar  Y_{s-},(Zm)^{(i)}_{s})}{
                                (\delta Zm)_{s}^{i}}\\
%\tilde M&:=M-\edg{\int_{0}^{\cdot}G^{*}_{s}(m_{s})^{-1}dM_{s},M}\\
d\Gamma_{s}&:=\Gamma_{s}\brak{F_{s}dA_{s}+G^{*}_{s}(m_{s})^{-1}dM_{s}};\qquad\Gamma_{0}=1.
\end{align*}
Note that $F$ and $G^{i}$ are uniformly bounded by $C_{y}$ and
$C_{z}$, respectively, and
$\Gamma_{t}\geq 0$ for all $t$. Moreover, 
\begin{align*}
\Gamma_{t}&=\scE\brak{\int_{0}^{\cdot}F_{s}dA_{s}+\int_{0}^{\cdot}G^{*}_{s}(m_{s})^{-1}dM_{s}}_{t}\leq e^{C_yK}\scE\brak{\int_{0}^{\cdot}G^{*}_{s}(m_{s})^{-1}dM_{s}}_{t}
\end{align*}
where $\scE\brak{\int_{0}^{t}G^{*}_{s}(m_{s})^{-1}dM_{s}}$ is a
martingale because of Novikov condition. Note that, by Doob's maximal inequality,
\begin{align*}
\bbE\sup_{0\leq s\leq t}
  \abs{\scE\brak{\int_{0}^{\cdot}G^{*}_{s}(m_{s})^{-1}dM_{s}}_{s}}^{2}&\leq
                                                                        4\bbE
                                                                        \abs{\scE\brak{\int_{0}^{\cdot}G^{*}_{s}(m_{s})^{-1}dM_{s}}_{t}}^{2}\\
&\leq4\bbE
  e^{\int_{0}^{t}|G_{s}|^{2}dA_{s}}\scE\brak{2\int_{0}^{\cdot}G^{*}_{s}(m_{s})^{-1}dM_{s}}_{t}\leq 4e^{nC_{z}^{2} K}<\infty
\end{align*}
and therefore, $\Gamma\in\bbS^{2}$.

On the other hand, if we subtract both equations, we get
\begin{align*}
\delta Y_{t}=\delta \xi+\int_{t}^{T}\brak{\delta f_{s}+F_{s}\delta
  Y_{s-}+\delta Z_{s}m_{s}G_{s}}dA_{s}-\int_{t}^{T}\delta
  Z_{s}dM_{s}-\delta N_{T}+\delta N_{s}.
\end{align*}
If we apply
{\Ito} formula to $\Gamma_{s}\delta Y_{s}$, we get
\begin{align}\label{lbsde}
\Gamma_{t}\delta Y_{t}=\delta Y_{0}-\int_{0}^{t}\Gamma_{s}\delta
  f_{s}dA_{s}+\int_{0}^{t}(\delta
  Y_{s-}\Gamma_{s}G^{*}_{s}(m_{s})^{-1}+\Gamma_{s}\delta
  Z_{s})dM_{s}+\int_{0}^{t}\Gamma_{s}d\delta N_{s}
\end{align}
This implies $\Gamma \delta Y+\int\Gamma_{s}\delta f_{s}dA_{s}$ is a
local martingale. Note that
\begin{align*}
\bbE\sup_{0\leq s\leq t} \abs{\Gamma_{s}\delta
  Y_{s}}&\leq\half \norm{\Gamma}^{2}_{\bbS^{2}}+\half\norm{\delta Y}^{2}_{\bbS^{2}}\\
\bbE\sup_{0\leq s\leq t} \abs{\int_{0}^{s}\Gamma_{u}\delta f_{u}dA_{u}}&\leq \bbE\sup_{0\leq s\leq t}|\Gamma_{s}| \int_{0}^{t}|\delta
  f_{u}|dA_{u}\leq \half \norm{\Gamma}^{2}_{\bbS^{2}}+\half\bbE\brak{\int_{0}^{t}|\delta
  f_{u}|dA_{u}}^{2}<\infty
\end{align*}
Therefore, $\Gamma \delta Y+\int\Gamma_{s}\delta f_{s}dA_{s}$ and
$
\int(\delta
  Y_{s-}\Gamma_{s}G^{*}_{s}(m_{s})^{-1}+\Gamma_{s}\delta Z_{s})dM_{s}+\int\Gamma_{s}dN_{s}
$
are martingales. 
% Since
% \begin{align*}
% \edg{\int_{0}^{\cdot}G^{*}_{s}(m_{s})^{-1}dM_{s},\int_{0}^{\cdot}G^{*}_{s}(m_{s})^{-1}dM_{s}}_{T}\leq
%   \int_{0}^{T}|G_{s}|^{2}dA_{s}\leq nC^{2}A_{T} <\infty,
% \end{align*}
% by Novikov condition,
% \begin{align*}
% \scE\brak{\int_{0}^{\cdot}G^{*}_{s}(m_{s})^{-1}dM_{s}} 
% \end{align*}
% is a uniformly integrable martingale. This implies that $\tilde M$ is
% $\tilde\bbP$- martingale since $|[\tilde M,\tilde M]_{T}|=|[M,M]_{T}|<\infty$.
% On the other hand, we have
% \begin{align*}
% f(s,Y_{s},Z_{s}m_{s})-f(s,\bar  Y_{s},\bar  Z_{s}m_{s})=\delta f_{s}+F_{s}\delta
%   Y_{s}+\delta Z_{s}m_{s}G_{s}
% \end{align*}
% and therefore, if we apply {\Ito} formula for
% $e^{\int_{0}^{t}F_{s}dA_{s}}\delta Y_{t}$,
% \begin{align*}
%  e^{\int_{0}^{t}F_{s}dA_{s}}\delta Y_{t}&=e^{\int_{0}^{T}F_{s}dA_{s}}\delta\xi+\int_{t}^{T}e^{\int_{0}^{s}F_{u}dA_{u}}\brak{\delta
%   f_{s}+\delta
%   Z_{s}m_{s}G_{s}}dA_{s}-\int_{t}^{T}e^{\int_{0}^{s}F_{u}dA_{u}}\delta
%   Z_{s}dM_{s}-\int_{t}^{T}e^{\int_{0}^{s}F_{u}dA_{u}} d(\delta
%   N)_{s}\\
% &=e^{\int_{0}^{T}F_{s}dA_{s}}\delta\xi+\int_{t}^{T}e^{\int_{0}^{s}F_{u}dA_{u}}\delta
%   f_{s}dA_{s}-\int_{t}^{T}e^{\int_{0}^{s}F_{u}dA_{u}}\delta
%   Z_{s}d\tilde M_{s}-\int_{t}^{T}e^{\int_{0}^{s}F_{u}dA_{u}} d(\delta
%   N)_{s}
% \end{align*}
% Since
% \begin{align*}
% \int_{0}^{t}e^{\int_{0}^{s}F_{u}dA_{u}}\delta
%   Z_{s}d\tilde M_{s}+\int_{0}^{t}e^{\int_{0}^{s}F_{u}dA_{u}} d(\delta
%   N)_{s}
% \end{align*}
% is a $\tilde\bbP$-martingale, 
If we take $\bbE_t$
on both side of the backward version of \eqref{lbsde}, we get
\begin{align*}
\delta Y_{t}=\frac{1}{\Gamma_{t}}\bbE_t\edg{\Gamma_{T}\delta\xi+\int_{t}^{T}\Gamma_{s}\delta
  f_{s}dA_{s}} \geq 0.
\end{align*}
%Therefore, the claim is proved.
\end{proof}
Now let us give the existence and uniqueness result when the terminal condition and the driver
depends on the path of $M'$ or $M$. Consider the following assumptions:
\begin{itemize}
\item[(S)] For any $\gamma\in D$ with $\norm{\gamma}_{\infty}\leq 1$,\[
\xi(M_{[0,T]}+\gamma),\xi(M'_{[0,T]})\in\bbL^{2}\quad\text{and}\quad
\bbE\int_{0}^{T}|f(s,M_{[0,s]}+\gamma,0,0)|^{2}dA_{s}, \bbE\int_{0}^{T}|f(s,M'_{[0,s]},0,0)|^{2}dA_{s}<\infty.
%\bbE \abs{\xi(M_{[0,T]})}^{2}+\bbE\int_{0}^{T} \abs{
%  f(t,M_{[0,t]},0,0)}^{2}dA_{t}<\infty.
\]
\item[(Lip)] There are nonnegative constants $C_{y}$ and $C_{z}$ such that
\begin{align*}
|f(t,\gamma_{[0,t]}, y,z)-f(t,\gamma_{[0,t]},y',z')|\leq C_{y}|y-y'|+C_{z}|z-z'|
\end{align*}
for all $\gamma\in D, t\in[0,T], y,y'\in\bbR^{d}, z,z'\in\bbR^{d\times
  n}$.
\end{itemize}
\begin{theorem}\label{prelim} Assume {\rm (S)} and {\rm (Lip)}.
The following BSDE
\begin{align}\label{cadlag}
Y_{t}=\xi(M'_{[0,T]})+\int_{t}^{T}f(s,M'_{[0,s]}, Y_{s-},Z_{s}m_{s})dA_{s}-\int_{t}^{T}Z_{s}dM_{s}-N_{T}+N_{t}, 
\end{align}
has a unique solution
$(Y,Z,N)\in\bbH^{2}\times\bbH_{m}^{2}\times\bbM^{2}$. Moreover,
$Y\in\bbS^{2}$ and there are path functionals
$\scY:[0,T]\times D\to\bbR^{d}, \scZ:[0,T]\times D\to\bbR^{d\times
  n}$, and $\scN:[0,T]\times D\to\bbR^{d}$ such that $\scY(t,\cdot),
\scZ(t,\cdot),\scN(t,\cdot)$ are $\scD$-measurable and
\begin{align*}
Y_{t}=\scY(t,M'_{[0,t]}),\qquad Z_{t}=\scZ(t,M'_{[0,t]}),\qquad\text{and }N_{t}=\scN(t,M'_{[0,t]})  
\end{align*}
 for each $t\in[0,T]$.
\end{theorem}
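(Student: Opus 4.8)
The plan is to reduce well-posedness to Proposition~\ref{stdbsde} and then upgrade the resulting abstract solution to a path-functional one by showing it is adapted to $\bbF^{M'}$ and invoking Lemma~\ref{msble}.

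First I would set $\bar\xi:=\xi(M'_{[0,T]})$ and $\bar f(s,\omega,y,z):=f(s,M'_{[0,s]}(\omega),y,z)$ and verify that $(\bar\xi,\bar f)$ satisfies {\rm (STD)}. Square integrability of $\bar\xi$ and $\bbE\int_{0}^{T}|\bar f(s,0,0)|^{2}dA_{s}<\infty$ are precisely the $M'$-parts of {\rm (S)}; the uniform Lipschitz bound in $(y,z)$ is {\rm (Lip)}; and progressive measurability of $\bar f$ follows since $(s,\omega)\mapsto M'_{[0,s]}(\omega)$ is adapted and c\`adl\`ag, hence progressively measurable into $(D,\scD)$, composed with the $\scP\otimes\scB(\bbR^{d})\otimes\scB(\bbR^{d\times n})$-measurable map $f$. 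Proposition~\ref{stdbsde} then delivers a unique $(Y,Z,N)\in\bbH^{2}\times\bbH^{2}_{m}\times\bbM^{2}$ solving \eqref{cadlag}, with $Y\in\bbS^{2}$.

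The real content is the path representation, which requires showing the solution is adapted to the \emph{smaller} filtration $\bbF^{M'}$, not merely to $\bbF$. The point is that $\bbF^{M'}$ itself satisfies every hypothesis used in the proof of Proposition~\ref{stdbsde}: it is complete, quasi-left continuous and right continuous, $M$ is still a square integrable martingale for it (being $\bbF^{M'}$-adapted, since $\bbF^{M}\subset\bbF^{M'}$, and an $\bbF$-martingale), and it carries its own Galtchouk--Kunita--Watanabe decomposition $\int Z\,dM+N$ with $[N,M]=0$. I would therefore re-run the contraction of Proposition~\ref{stdbsde} inside the closed subspaces of $\bbF^{M'}$-adapted (respectively predictable) processes: starting the Picard iteration at $(0,0,0)$ and using that $\bar\xi$ and $\bar f(s,y_{s-},z_{s}m_{s})$ are $\scF^{M'}_{T}$-measurable whenever $(y,z)$ is $\bbF^{M'}$-adapted, the map $\bbE[\,\cdot\mid\scF^{M'}_{t}]$ returns an $\bbF^{M'}$-martingale whose decomposition components are again $\bbF^{M'}$-measurable. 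Every iterate, hence the fixed point, is then $\bbF^{M'}$-adapted, and by the uniqueness already established it coincides with $(Y,Z,N)$.

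Finally I would apply Lemma~\ref{msble} to $Y$ and $N$ directly, and to $Z$ after remarking that predictability makes each $Z_{t}$ measurable with respect to $\scF^{M'}_{t-}\subset\scF^{M'}_{t}$, so the lemma applies pointwise in $t$; this produces the $\scD$-measurable functionals $\scY(t,\cdot),\scZ(t,\cdot),\scN(t,\cdot)$ with the claimed representations. I expect the middle step to be the main obstacle: one must ensure the martingale representation and all conditional expectations are genuinely taken within $\bbF^{M'}$ so that $N$ remains orthogonal to $M$ in that filtration, and reconcile this $\bbF^{M'}$-construction with the ambient $\bbF$-solution---a reconciliation that rests on uniqueness from Proposition~\ref{stdbsde} and on the fact that the data of \eqref{cadlag} is a functional of $M'$ alone.
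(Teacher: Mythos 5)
Your proposal is correct and follows essentially the same route as the paper: verify that $(\xi(M'_{[0,T]}),f(\cdot,M'_{[0,\cdot]},\cdot,\cdot))$ satisfies {\rm (STD)} and invoke Proposition~\ref{stdbsde}, then re-solve the same BSDE under the smaller filtration $\bbF^{M'}$ (noting $M$, $m$, $A$ are $\bbF^{M'}$-adapted) and identify the two solutions via uniqueness, so that $(Y,Z,N)$ is $\bbF^{M'}$-adapted and Lemma~\ref{msble} yields the path functionals. Your additional care about the contraction preserving $\bbF^{M'}$-adaptedness and about the reconciliation of the $\bbF^{M'}$-solution with the ambient $\bbF$-solution only makes explicit what the paper's proof states in one line.
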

\begin{proof} 
Since $\xi(M'_{[0,T]})$ and $f(s, M'_{[0,s]},y,z)$ satisfies (STD), the
existence and uniqueness of solution follows from Proposition \ref{stdbsde}.
Consider a BSDE with the same terminal condition and driver under the
filtration $\bbF^{M'}$ and note that $M, m, A$ are adapted to $\bbF^{M'}$. By the same logic above, there
exists a unique solution which is adapted to $\bbF^{M'}$. Moreover,
since $\bbF^{M'}\subset\bbF$, this solution and the solution $(Y,Z,N)$
under original filtration should be the same. Therefore, $(Y,Z,N)$ should
be $\bbF^{M'}$-adapted. The existence of path functionals $\scY,\scZ$,
and $\scN$ is a result of Lemma \ref{msble}.
\end{proof}

\begin{theorem}\label{prelim2} Assume {\rm (S)} and {\rm (Lip)}.
Let $h\in[0,1/2]$ and
\begin{align*}
\xi^{h}(\gamma_{[0,T]})&:=\xi(\gamma_{[0,T]}+h {\bf e_{i}}^{*}
  \indicator{[u,T]})\\
f^{h}(s,\gamma_{[0,s]},y,z)&:=f(s,(\gamma+h {\bf e_{i}}^{*}
  \indicator{[u,T]})_{[0,s]},y,z). 
\end{align*}
Then, BSDE($\xi^{h},f^{h}$)
has a unique solution
$(Y^{h},Z^{h},N^{h})\in\bbH^{2}\times\bbH_{m}^{2}\times\bbM^{2}$. Moreover,
$Y^{h}\in\bbS^{2}$. In addition, for the same path functionals
$\scY,\scZ,\scN$ defined in
Theorem \ref{prelim}, we have
\begin{align*}
Y^{h}_{t}=\scY(t,M_{[0,t]}+h {\bf e_{i}}^{*}
  \indicator{[u,T]}),\qquad Z^{h}_{t}=\scZ(t,M_{[0,t]}+h {\bf e_{i}}^{*}
  \indicator{[u,T]}),\qquad\text{and }N^{h}_{t}=\scN(t,M_{[0,t]}+h {\bf e_{i}}^{*}
  \indicator{[u,T]})  
\end{align*}
 for each $t\in[0,T]$. In particular, this holds when $h=0$.
\end{theorem}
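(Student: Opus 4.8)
The plan is to treat the statement as two separate claims: first, existence and uniqueness of $(Y^{h},Z^{h},N^{h})$ with $Y^{h}\in\bbS^{2}$; second, the identification of these processes with the functionals $\scY,\scZ,\scN$ produced by Theorem \ref{prelim}. Write $\gamma^{h}:=h{\bf e_{i}}^{*}\indicator{[u,T]}$, so that $\xi^{h}(\cdot)=\xi(\cdot+\gamma^{h})$ and $f^{h}(s,\cdot,y,z)=f(s,(\cdot+\gamma^{h})_{[0,s]},y,z)$. For the first claim I would simply verify that $(\xi^{h},f^{h})$, read as coefficients of a BSDE driven by $M$, satisfies {\rm (STD)}, and then quote Proposition \ref{stdbsde}. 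Since $\norm{\gamma^{h}}_{\infty}=h\leq 1/2\leq 1$, assumption {\rm (S)} applies to the shift $\gamma^{h}$ and gives $\xi^{h}(M_{[0,T]})=\xi(M_{[0,T]}+\gamma^{h})\in\bbL^{2}$ as well as $\bbE\int_{0}^{T}\abs{f(s,M_{[0,s]}+\gamma^{h},0,0)}^{2}dA_{s}<\infty$; measurability holds because $s\mapsto M_{[0,s]}+\gamma^{h}_{[0,s]}$ is predictable; and the $(y,z)$-Lipschitz bound is inherited verbatim from {\rm (Lip)}, the shift $\gamma^{h}$ not touching the $(y,z)$ arguments. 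Thus {\rm (STD)} holds and Proposition \ref{stdbsde} delivers the unique solution together with $Y^{h}\in\bbS^{2}$.

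For the representation, the guiding idea is that adding the deterministic bump $\gamma^{h}$ to $M$ is the same as freezing the auxiliary jump process $\hat M$ at the single-jump path $\gamma^{h}$ inside the $M'$-BSDE of Theorem \ref{prelim}. Because $\abs{h{\bf e_{i}}}=h\leq 1/2$ lies in the unit ball supporting the jumps of $\hat M$, the path $\gamma^{h}$ belongs to $\crl{\hat M(\omega):\omega\in\Omega}$. I would define the candidate triple $\tilde Y_{t}:=\scY(t,M_{[0,t]}+\gamma^{h})$, $\tilde Z_{t}:=\scZ(t,M_{[0,t]}+\gamma^{h})$, $\tilde N_{t}:=\scN(t,M_{[0,t]}+\gamma^{h})$, adopting the harmless convention that each functional reads only the restriction of its path argument to $[0,t]$ (so that for $t<u$ the bump is invisible and the $h=0$ and pre-$u$ behaviours coincide). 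The goal is to show $(\tilde Y,\tilde Z,\tilde N)$ solves BSDE($\xi^{h},f^{h}$); the uniqueness established above then forces $(Y^{h},Z^{h},N^{h})=(\tilde Y,\tilde Z,\tilde N)$ and yields the representation. The asserted special case $h=0$ is just $\gamma^{0}\equiv 0$.

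The reduction rests on two observations. First, the integrators $dA_{s}$ and $dM_{s}$ and the process $m$ are functionals of $M$ alone and are untouched by $\hat M$; hence the only place $\hat M$ enters the $\bbP$-a.s. identity of Theorem \ref{prelim} is through the path argument $M'_{[0,\cdot]}=M_{[0,\cdot]}+\hat M_{[0,\cdot]}$ fed to $\scY,\scZ,\scN,\xi,f$. Replacing $\hat M$ by the deterministic $\gamma^{h}$ in that identity produces exactly the equation defining BSDE($\xi^{h},f^{h}$) for $(\tilde Y,\tilde Z,\tilde N)$. Second, $M$ and $\hat M$ are independent, so conditioning on $\crl{\hat M=\gamma^{h}}$ leaves the law of $M$, and with it every $M$-integrator, unchanged; under this conditioning the a.s. $M'$-identity descends to the required a.s. identity for the candidate triple. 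Organised this way, all the analytic content is the routine {\rm (STD)}-check, and the representation is a single clean appeal to independence plus the substitution $\hat M\rightsquigarrow\gamma^{h}$.

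The hard part, and the step I would be most careful about, is making this conditioning rigorous, because $\crl{\hat M=\gamma^{h}}$ is $\bbP$-null: a single jump at a prescribed time and of prescribed size has probability zero, so a plain Fubini argument only delivers the identity for $\bbP_{\hat M}$-almost every shift, not for the specific path $\gamma^{h}$. This is precisely the role of the richness hypothesis on $(\Omega,M,\hat M)$: the requirement that for every $\gamma\in\crl{\hat M(\omega):\omega\in\Omega}$ and every $\omega'$ there exist $\omega$ with $\hat M(\omega)=\gamma$ and $M(\omega)=M(\omega')$ pins the regular conditional law of $M$ given $\hat M=\gamma$ to the unconditional law $\bbP_{M}$ for \emph{every} such $\gamma$, not merely for almost every one. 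With that in hand the a.s. $M'$-identity transports to $\gamma=\gamma^{h}$ and the substitution closes. By contrast, for $h=0$ no subtlety arises, since $\crl{\hat M\equiv 0}$ is the positive-probability event $\crl{\hat M\text{ has no jump}}$ of probability $e^{-T}$, so the $h=0$ representation follows from ordinary conditioning.
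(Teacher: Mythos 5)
Your proposal follows the paper's skeleton almost exactly: the existence--uniqueness half is, as in the paper, a verification that $(\xi^{h},f^{h})$ satisfies {\rm (STD)} (using that $\norm{h{\bf e_{i}}^{*}\indicator{[u,T]}}_{\infty}\leq 1$ so {\rm (S)} applies, and that the shift does not touch the $(y,z)$-arguments so {\rm (Lip)} is inherited) followed by Proposition \ref{stdbsde}; the representation half is the same substitution of the deterministic bump $\gamma^{h}:=h{\bf e_{i}}^{*}\indicator{[u,T]}$ for $\hat M$ in the $M'$-BSDE of Theorem \ref{prelim}, closed by uniqueness of the solution of BSDE($\xi^{h},f^{h}$). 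Your observation that $h\leq 1/2$ keeps the bump inside the unit ball supporting the jumps, so that $\gamma^{h}\in\crl{\hat M(\omega):\omega\in\Omega}$, is precisely the (implicit) reason for the hypothesis $h\in[0,1/2]$, and you also correctly isolate the crux: an a.s.\ identity in $(M,\hat M)$ must be transported to the single $\bbP_{\hat M}$-null path $\gamma^{h}$.

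Where you diverge is the mechanism for that transport, and your mechanism does not work as stated. You claim the richness hypothesis ``pins the regular conditional law of $M$ given $\hat M=\gamma$ to $\bbP_{M}$ for every such $\gamma$.'' No pointwise condition on the range of $(M,\hat M)$ can do this: regular conditional distributions are only determined up to $\bbP_{\hat M}$-null modification, and by independence the constant kernel $\gamma\mapsto\bbP_{M}$ is already a version defined at \emph{every} $\gamma$ --- yet this does not transport an almost-sure identity to the fixed null fiber $\crl{\hat M=\gamma^{h}}$; your own Fubini objection applies verbatim to any version of the kernel, so the disintegration route ends exactly where it started. The paper never conditions; it argues pathwise. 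Fix the full-measure set $\Omega'$ on which the representation of Theorem \ref{prelim} and the BSDE identity hold; for each $\omega'\in\Omega'$ the richness hypothesis supplies $\omega$ with $M(\omega)=M(\omega')$ and $\hat M(\omega)=\gamma^{h}$, whence $M'_{[0,t]}(\omega)=(M_{[0,t]}+\gamma^{h})(\omega')$ and the identities evaluated at $\omega$ read precisely as the equation BSDE($\xi^{h},f^{h}$) together with the asserted representation along the path $M(\omega')$; since such $\omega'$ exhaust almost all paths of $M$, the candidate triple $(\scY(t,M^{h}_{[0,t]}),\scZ(t,M^{h}_{[0,t]}),\scN(t,M^{h}_{[0,t]}))$ solves BSDE($\xi^{h},f^{h}$), and uniqueness finishes. (The paper is itself terse about why the selected $\omega$ may be taken where the identities hold, but this $\omega$-by-$\omega$ substitution --- which is in fact the pathwise content you quote when stating the hypothesis --- is the intended argument.) You should therefore replace your conditional-law paragraph by this direct pathwise substitution rather than routing it through regular conditional distributions; your aside about $h=0$, while correct ($\bbP(\hat M\equiv 0)=e^{-T}>0$), is unnecessary, since the pathwise argument treats $h=0$ uniformly.
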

\begin{proof}
The existence of unique solution $(Y^{h},Z^{h},N^{h})$ and $Y^{h}$ being in $\bbS^{2}$
comes from Proposition \ref{stdbsde}. Let us denote $M^{h}_{t}:=M_t+h {\bf e_{i}}^{*}
  \indicator{[u,T]}(t)$. Let $(Y',Z',N')$ be the unique solution of \eqref{cadlag}
and $(\scY, \scZ,\scN)$ be the corresponding path functionals. Let us define $\Omega'\subset\Omega$ so that
$\bbP(\Omega')=1$ and
$Y'_{t}(\omega')=\scY(t,M'_{[0,t]}(\omega')),Z'_{t}(\omega')=\scZ(t,M'_{[0,t]}(\omega'))$, and
$N'_{t}(\omega')=\scN(t,M'_{[0,t]}(\omega'))$ for all
$\omega'\in\Omega'$. Note that, by our assumption on $\hat M$ and $M$, for all
$\omega'\in\Omega'$, there exists $\omega\in\Omega$ such
that $M(\omega)=M(\omega')$ and $\hat M(\omega)=h {\bf
  e_{i}}^{*}\indicator{[u,T]}$. For such $\omega$,\begin{align*}
Y'_{t}(\omega)=\scY(t,M^{h}_{[0,t]}(\omega)),\qquad Z'_{t}(\omega)=\scZ(t,M^{h}_{[0,t]}(\omega)), \qquad\text{ and } N'_{t}(\omega)=\scN(t,M^{h}_{[0,t]}(\omega)). 
\end{align*} and
\[
Y'_{t}(\omega)=\xi(M^{h}_{[0,T]}(\omega))+\int_{t}^{T}f(s,M^{h}_{[0,s]}(\omega), Y'_{s-}(\omega),Z'_{s}(\omega) m_{s}(\omega))dA_{s}(\omega)-\brak{\int_{t}^{T}Z'_{s}dM_{s}} (\omega)-N'_{T}(\omega)+N'_{t}(\omega).
\]
Since this holds for all $\omega$ realizing all possible paths of $M^{h}$ and $M$, the triplet \[(Y^{h}_{t}, Z^{h}_{t}, N^{h}_{t}):=(\scY(t,M^{h}_{[0,t]}),\scZ(t,M^{h}_{[0,t]}),\scN(t,M^{h}_{[0,t]})) \] is the unique solution of BSDE$(\xi^{h},f^{h})$.
\end{proof}
\begin{Remark}\label{jump}
	One may ask whether we can consider $M$ with jumps. If $M$ is a martingale with jumps, we know that the solution $(Y,Z,N)$ is adapted to the filtration generated by both $M$ and $M'$. However, $M$ is not adapted to $\bbF^{M'}$ and it is not obvious that whether the solution $(Y,Z,N)$ is actually adapted to the filtration generated only by $M'$.
\end{Remark}
\section{Path-differentiability of BSDE with Lipschitz Driver}\label{sec:diffbsde}
% In this section we assume {\rm (S), (Lip)}. Then, BSDE($\xi, f$) has a unique solution
% $(Y,Z,N)\in\bbH^{2}\times\bbH_{m}^{2}\times\bbM^{2}$ by Theorem \ref{prelim}.
Assuming that every martingale can be represented by stochastic integral with respect to $M$, $Z$ is often a ``path-derivative" of $Y$ in some sense: see, for example, El Karoui (1997, \cite{ElKaroui:1997dn}) for Malliavin calculus sense or Cont (2016, \cite{Bally:2016dw}) for functional {\Ito} calculus sense. This property is also called \textit{delta-hedging formula} due to its relationship with finance. We will prove this property and use it to study locally Lipschitz BSDEs. More precisely, we will find the almost sure uniform bound of the path-derivative of $Y$  to conclude $Z$ is uniformly bounded. Then, our locally Lipschitz BSDE becomes essentially Lipschitz BSDE and existence, uniqueness, and stability automatically follows from the results of Section \ref{sec:basic}.

However, both Malliavin calculus and functional {\Ito} calculus are not suitable for our problem as we described in the introduction. 
%Malliavin calculus is not suitable because we are studying general continuous martingale $M$. For example, if $M$ is Brownian motion stopped by hitting time, it is generally not Malliavin differentiable. Functional {\Ito} calculus is not appropriate either because it is not obvious what BSDE the path-derivative of solution satisfy and consequently, it is not easy to find the global bound of $Z$.
 Therefore, we will first define a new sense of path-derivative. Then, we will prove the delta-hedging formula for BSDE and show that the path-derivative of $(Y,Z,N)$ is the solution of the differentiated BSDE. This result, combined with Proposition \ref{bddY}, will be used to find the bound of $Z$ in Section \ref{sec:ext}.

\begin{definition}
For a random variable $V=\scV(M_{[0,T]})$ and
a vector ${\bf e}\in\bbR^{1\times n}$, we say $V$ is $\nabla^{\bf
  e}$-differentiable at $u$ if
\begin{align*}
\lim_{h\to0,\bbL^{2}}\frac{\scV(M_{[0,T]}+h {\bf e}^{*} \indicator{[u,T]})-\scV(M_{[0,T]})}{h}
\end{align*}
exists and we denote it by $\nabla^{\bf e}_{u}V$.
For a stochastic process $X=\scX(\cdot, M_{[0,\cdot]})$  and
a vector ${\bf e}\in\bbR^{1\times n}$, we say $X$ is $\nabla^{\bf
  e}$-, $\nabla^{{\bf e},m}$-, and $\nabla^{{\bf e},N}$-differentiable at
$u$ and define the
\emph{$\nabla^{\bf e}$-, $\nabla^{{\bf e},m}$-, and $\nabla^{{\bf e},N}$-derivative
at $u$} by
\begin{align*}
\nabla^{{\bf e}}_{u} X&:=\lim_{h\to0,\bbH^{2}}\frac{\scX(\cdot,(M_{[0,T]}+h
  {\bf e}^{*} \indicator{[u,T]})_{[0,\cdot]})-\scX(\cdot, M_{[0,\cdot]})}{h}\qquad\qquad\text{
  if }X\in\bbH^{2}\\
\nabla^{{\bf e},m}_{u} X&:=\lim_{h\to0,\bbH_{m}^{2}}\frac{\scX(\cdot,(M_{[0,T]}+h
  {\bf e}^{*} \indicator{[u,T]})_{[0,\cdot]})-\scX(\cdot, M_{[0,\cdot]})}{h}\qquad\qquad\text{
  if }X\in\bbH^{2}_{m}\\
\nabla^{{\bf e},N}_{u} X&:=\lim_{h\to0,\bbM^{2}}\frac{\scX(\cdot,(M_{[0,T]}+h
  {\bf e}^{*} \indicator{[u,T]})_{[0,\cdot]})-\scX(\cdot, M_{[0,\cdot]})}{h}\qquad\qquad\text{
  if }X\in\bbM^{2}
\end{align*}
if the corresponding limit exists.
In general, we denote
\begin{align*}
 \nabla_{u} V=\sum_{i=1}^{n}(\nabla^{{\bf e}_{i}}_{u} \scV){\bf e}_{i},\qquad\qquad
 \nabla^{m}_{u} X=\sum_{i=1}^{n}(\nabla^{{\bf e}_{i},m}_{u} \scX){\bf
                   e}_{i},\qquad\qquad\text{and }\quad
 \nabla^{N}_{u} X=\sum_{i=1}^{n}(\nabla^{{\bf e}_{i},N}_{u} \scX){\bf e}_{i}.
\end{align*}
where $\crl{{\bf e}_{i}}_{i=1,2,\cdots, n}$ is the standard basis of
$\bbR^{1\times n}$. If a random variable or a stochastic process
$\nabla^{\bf e}$-, $\nabla^{{\bf e},m}$-, and $\nabla^{{\bf
  e},N}$-differentiable for every ${\bf
  e}\in\bbR^{1\times n}$ and at almost every
$u\in[0,T]$, then we say the random variable or stochastic process is
differentiable with respect to $M$, or $\nabla$-, $\nabla^{m}$, $\nabla^{N}$-differentiable.
\end{definition}
\begin{remark}
This is a modified version of vertical functional {\Ito}
	derivative: see Dupire (2009, \cite{Dupire:2009eu}), Cont and
	Fournie (2013, \cite{Cont:2013hy}). The key differences are that it is time-parametrized and the
	convergence is in $L^{2}$-sense with respect to an appropriate
	measure.
\end{remark}
Note that the above definition crucially depends on the representation
path-functional $\scV$ or $\scX$. For example, let $c:\gamma\in D\mapsto c(\gamma)\in C([0,T]:\bbR^{n})$ be a function that removes
any jump part of $\gamma$. Then, the random variable
$V=\scV(M_{[0,T]})$ can also be written as $(\scV\circ
c)(M_{[0,T]})$. Note that $V\circ c$ is always $\nabla$-differentiable
with derivative $0$. 

Therefore, in order to incorporate above definitions to establish meaningful result in BSDE, we need to choose path representation carefully. Since (stochastic) integral are defined as a limit of time-partitioned sum, we have already restricted our representation of $\xi$ and $f$ by writing a BSDE. Therefore, one should choose the path-functional representation of $\xi$ and $f$ as limits of path-dependent functionals which depends on finite number of time sections of the path of $M$. Otherwise, the path-derivative of left hand side of BSDE may be different from path-derivative of right hand side which is written by stochastic integrals.
Let
$\scC_{k}$ be the set of continuously
differentiable functions from $(\bbR^{n})^{k}$ to $\bbR^{d}$. Let 
\begin{align*}
\scS:=\crl{H:D\to\bbR^{d}:\exists k\in\bbN, g\in \scC_{k}, 0=t^{k}_{0}\leq
  t^{k}_{1}\leq\cdots\leq t^{k}_{k}=T\;\; s.t.\;\;H(\gamma)=g(\gamma_{t^{k}_{1}}-\gamma_{t^{k}_{0}},\cdots, \gamma_{t^{k}_{k}}-\gamma_{t^{k}_{k-1}})} 
\end{align*}
For $\xi^{(k)}\in\scS$, we select $g\in\scC_{k}$ such that
$\xi^{(k)}(\gamma)=g(\gamma_{t^{k}_{1}}-\gamma_{t^{k}_{0}},\cdots,
\gamma_{t^{k}_{k}}-\gamma_{t^{k}_{k-1}})$ and then, we have
\begin{align*}
 \nabla^{\bf
  e}_{u}\xi^{(k)}(M_{[0,T]})=(\partial_{i}g)(M_{t^{k}_{1}}-M_{t^{k}_{0}},\cdots,
  M_{t^{k}_{k}}-M_{t^{k}_{k-1}}){\bf e}^{*}
\end{align*}
where $i$ satisfies $t^{k}_{i-1}< u\leq t^{k}_{i}$. For the
driver $f$ which depends on finitely many values
$(\gamma_{t^{k}_{i}})_{i=1,...,k}$, we choose its path functional and define the derivative
similarly: for each $(s,y,z)\in[0,T]\times\bbR^{d}\times \bbR^{d\times
  n}$, we treat $f(s,\cdot,y,z)$ as in $\scS$. For solution $(Y,Z,N)$
of BSDE($\xi,f$), we will always refer to the path functional
$\scY,\scZ,\scN$ defined in Theorem \ref{prelim2}. Note that the choice
of such $(\scY,\scZ,\scN)$ may not be unique but their derivatives are
unique stochastic processes which together forms a solution of
differentiated BSDE as we will see soon. This is consistent with the
result of Cont and Fournie (2013, \cite{Cont:2013hy}).

We would like to emphasize that these definitions are only needed in order to estimate the bound of $Z$ process by delta-hedging formula $Z_t=\nabla_t Y_t$ which is the next section's main result Theorem \ref{thm:ZdY}.
The key idea is the following:
\begin{itemize}
	\item[(i)]  Proposition \ref{prop:diff}: Consider BSDE$(\xi^{(k)},f^{(k)})$ where $(\xi^{(k)},f^{(k)})$ converges to $(\xi,f)$. Choose a representation of $(Y^{(k)},Z^{(k)},N^{(k)})$ of BSDE$(\xi^{(k)},f^{(k)})$ by Theorem \ref{prelim2} and establish BSDE satisfied by $(\nabla_tY^{(k)},\nabla^m_tZ^{(k)},\nabla^N_tN^{(k)})$.
	\item[(ii)] Theorem \ref{thm:ZdY0} and Corollary \ref{cor:ZdY0}: Prove $Z^{(k)}_t=\nabla_tY^{(k)}_t$ when $M$ has martingale representation property. 
	\item[(iii)] Theorem \ref{thm:ZdY}: Using Proposition \ref{bddY}, Proposition \ref{prop:diff}, and the fact that $Z^{(k)}$ converges to $Z$, find the bound of $Z$.
\end{itemize}

\begin{proposition}\label{prop:diff} Assume $\xi$ and $f$ satisfy {\rm (S), (Lip)}
	\begin{itemize}
		\item[{\rm (Diff)}] For each
		$(s,y,z)\in[0,T]\times\bbR^{d}\times\bbR^{d\times n}$, $\xi\in\scS$
		and $f(s,\cdot,y,z)\in\scS$. In addition, for all ${\bf
			e}\in\bbR^{1\times n}$ and almost every $u\in[0,T]$,
		$\nabla^{\bf e}_{u}\xi(M_{[0,T]})\in\bbL^{2}$ and $\nabla^{\bf
			e}_{u}f(\cdot, M_{[0,\cdot]},y',z')\;\vline_{(y',z')=(Y_{\cdot},(Zm)_{\cdot})}\in\bbH^{2}$,
		\item[{\rm (D)}] For all $t\in[0,T],\gamma\in D$,
		$f(t,\gamma_{[0,t]},y,z)$ is continuously differentiable with
		respect to $y$ and $z$. 
	\end{itemize}
	 and let $(Y,Z,N)$ to be the solution of {\rm
    BSDE($\xi,f$)}. Let $\scY,\scZ,\scN$ be the corresponding path
  functional as in Theorem \ref{prelim2}.
Then, the solution $Y_\cdot=\scY(\cdot,M_{[0,\cdot]}),Z_\cdot=\scZ(\cdot,M_{[0,\cdot]})$, and $N_\cdot=\scN(\cdot,M_{[0,\cdot]})$
are $\nabla$-, $\nabla^{m}$-, and $\nabla^{N}$-differentiable, respectively. Moreover, for each
$i=1,2,\cdots, n$
and almost every $u\in[0,T]$,
\begin{align*}
 ((\nabla^{\bf e_{i}}_{u} Y)_{t},(\nabla^{{\bf e_{i}},m}_{u}
  Z)_{t},(\nabla^{{\bf e_{i}},m}_{u}N)_t)=\left\{\begin{array}{ll}(0,0,0)&\text{if }u> t\\
                   (U_{t},V_{t},W_{t})&\text{if } u\leq t\end{array}
                                        \right. \qquad dt\otimes
                                        d\bbP\text{-almost everywhere}
\end{align*}
where
$(U,V,N)\in\bbH^{2}\times\bbH^{2}_{m}\times\bbM^{2}$
is the unique solution of the \eqref{stdBSDEeq} with the terminal
condition $\nabla^{\bf e_{i}}_{u}\xi(M_{[0,T]})$ and the driver
\begin{align*}
g(t,y,z)=\zeta_{t}+\eta_{t}y+\theta_{t}\cdot z
\end{align*}
Here, we defined
\begin{align*}
\zeta_{t}&:= (\nabla^{\bf e_{i}}_{u}f)(t,
  M_{[0,t]},y',z')\vline_{(y',z')=(Y_{t},Z_{t}m_{t})}\\
\eta_{t}&:=(\partial_{y}f)(t,  M_{[0,t]},Y_{t-},Z_{t}m_{t})\\
\theta_{t}&:=(\partial_{z}f)(t,
  M_{[0,t]},Y_{t-},Z_{t}m_{t})
\end{align*}
and
\[
\theta_{t}\cdot z:=\sum_{i,j}(\partial_{z^{ij}}f(t,
  M_{[0,t]},Y_{t},Z_{t}m_{t})z^{ij}
\]
\end{proposition}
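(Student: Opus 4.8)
The plan is to obtain $\nabla^{{\bf e}_{i}}_{u}Y$, $\nabla^{{\bf e}_{i},m}_{u}Z$, $\nabla^{{\bf e}_{i},N}_{u}N$ as the $\bbH^{2}$-, $\bbH^{2}_{m}$-, $\bbM^{2}$-limits of the difference quotients
\[
\tfrac1h\brak{Y^{h}-Y},\qquad \tfrac1h\brak{Z^{h}-Z},\qquad \tfrac1h\brak{N^{h}-N},
\]
where $(Y^{h},Z^{h},N^{h})$ is the solution of BSDE($\xi^{h},f^{h}$) produced by Theorem \ref{prelim2}. Since that theorem represents this solution through the \emph{same} functionals $\scY,\scZ,\scN$ as $(Y,Z,N)$, namely $Y^{h}_{t}=\scY(t,M_{[0,t]}+h{\bf e}_{i}^{*}\indicator{[u,T]})$ and likewise for $Z^{h},N^{h}$, these quotients are exactly the ones in the definition of the path-derivatives, so the task is to prove that they converge and to identify the limit with the advertised linear BSDE.

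First I would settle the case $u>t$. The functionals $\scY(t,\cdot),\scZ(t,\cdot),\scN(t,\cdot)$ see a path only through its restriction to $[0,t]$, on which the shift $h{\bf e}_{i}^{*}\indicator{[u,T]}$ is identically $0$ when $u>t$; hence $Y^{h}_{t}=Y_{t}$, $Z^{h}_{t}=Z_{t}$, $N^{h}_{t}=N_{t}$ for those $t$, the quotients vanish, and the derivatives are $(0,0,0)$. This produces the indicator $\indicator{u\le t}$ and localizes the remaining analysis to $[u,T]$, where $\tfrac1h(N^{h}-N)$, renormalized to vanish at $u$, is again a genuine element of $\bbM^{2}$. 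On that interval, subtracting the two BSDEs and dividing by $h$ shows that $\brak{\tfrac1h(Y^{h}-Y),\tfrac1h(Z^{h}-Z),\tfrac1h(N^{h}-N)}$ solves \eqref{stdBSDEeq} with terminal datum $\tfrac1h(\xi^{h}-\xi)$ and the affine driver $\tilde g^{h}(s,y,z)=\zeta^{h}_{s}+\eta^{h}_{s}y+\theta^{h}_{s}\cdot z$, where $\eta^{h}_{s},\theta^{h}_{s}$ are the averaged partials $\int_{0}^{1}\partial_{y}f\,d\lambda,\int_{0}^{1}\partial_{z}f\,d\lambda$ furnished by the fundamental theorem of calculus and, by {\rm (D)}, bounded by $C_{y},C_{z}$, while
\[
\zeta^{h}_{s}=\tfrac1h\edg{f(s,M^{h}_{[0,s]},Y^{h}_{s-},Z^{h}_{s}m_{s})-f(s,M_{[0,s]},Y^{h}_{s-},Z^{h}_{s}m_{s})},\qquad M^{h}:=M+h{\bf e}_{i}^{*}\indicator{[u,T]}.
\]

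Next I would record the coefficient convergences driving the limit. By {\rm (Diff)}, $\tfrac1h(\xi^{h}-\xi)\to\nabla^{{\bf e}_{i}}_{u}\xi$ in $\bbL^{2}$. Applying Proposition \ref{prop:stab} to the pair $(\xi^{h},f^{h})$, $(\xi,f)$---whose data differ by $O(h)$ in $\gamma$---gives $(Y^{h},Z^{h},N^{h})\to(Y,Z,N)$ in $\bbH^{2}\times\bbH^{2}_{m}\times\bbM^{2}$, so along a subsequence the evaluation points converge $dt\otimes d\bbP$-a.e. Combined with the continuity in {\rm (D)} this yields $\eta^{h}\to\eta$, $\theta^{h}\to\theta$ a.e.; combined with the mean-value form of $\zeta^{h}$ coming from the $\scS$-representation in {\rm (Diff)} it yields $\zeta^{h}\to\zeta$ a.e. The Lipschitz dependence of $f$ on $\gamma$ bounds $\abs{\zeta^{h}}$ uniformly in $h$, and $\abs{\eta^{h}-\eta}\le2C_{y}$, $\abs{\theta^{h}-\theta}\le2C_{z}$.

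Finally I would pass to the limit through Proposition \ref{prop:stab}, applied on $[u,T]$, taking the difference-quotient BSDE as the \emph{first} equation and the linear BSDE \eqref{stdBSDEeq} with data $(\nabla^{{\bf e}_{i}}_{u}\xi,g)$---solution $(U,V,W)$---as the \emph{second}. The main obstacle is that the stability bound produces a driver gap, and the naive gap $(\zeta^{h}-\zeta)+(\eta^{h}-\eta)\tfrac1h(Y^{h}-Y)+(\theta^{h}-\theta)\tfrac1h(Z^{h}-Z)m$ pairs a.e.-null but merely bounded factors with difference quotients that are only bounded in $\bbH^{2}$, so dominated convergence cannot be applied directly. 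The resolution is exactly that in Proposition \ref{prop:stab} the driver discrepancy is evaluated at the \emph{second} solution: it becomes $(\zeta^{h}-\zeta)+(\eta^{h}-\eta)U_{s-}+(\theta^{h}-\theta)V_{s}m_{s}$, in which the product terms now carry the \emph{fixed} integrands $U\in\bbH^{2}$ and $Vm\in\bbH^{2}_{m}$ against uniformly bounded, a.e.-null coefficients, so dominated convergence forces them to $0$ in $\bbH^{2}$; the term $\zeta^{h}-\zeta\to0$ in $\bbH^{2}$ by the uniform bound on $\zeta^{h}$ together with its a.e. limit. Proposition \ref{prop:stab} then gives $\brak{\tfrac1h(Y^{h}-Y),\tfrac1h(Z^{h}-Z),\tfrac1h(N^{h}-N)}\to(U,V,W)$ on $[u,T]$ in $\bbH^{2}\times\bbH^{2}_{m}\times\bbM^{2}$, and uniqueness of the solution of the linear BSDE promotes this subsequential limit to a full one, identifying the path-derivatives with $(U,V,W)$ on $\crl{u\le t}$ as claimed.
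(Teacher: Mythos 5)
Your overall architecture is exactly the paper's: represent $(Y^{h},Z^{h},N^{h})$ through the same functionals $\scY,\scZ,\scN$ via Theorem \ref{prelim2}, note the quotients vanish when $u>t$, show the difference quotients solve an affine BSDE with coefficients bounded by $C_{y},C_{z}$, and pass to the limit through Proposition \ref{prop:stab} --- including the one genuinely delicate point, which you identify correctly: the stability estimate evaluates the driver discrepancy at the \emph{fixed} limit solution $(U,V)$, so the products $(\eta^{h}-\eta)U$ and $(\theta^{h}-\theta)\cdot Vm$ can be killed by dominated convergence. Your preliminary step, applying Proposition \ref{prop:stab} to conclude $(Y^{h},Z^{h},N^{h})\to(Y,Z,N)$ and extracting a.e.\ convergent subsequences of the evaluation points, is sound and in fact makes explicit something the paper's proof leaves implicit when it asserts $\eta^{h,u,i}\to\eta$ and $\theta^{h,u,i}\to\theta$.

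There is, however, one step that does not go through as you justify it. You telescope the driver increment so that the path-shift quotient is $\zeta^{h}_{s}=\frac{1}{h}\edg{f(s,M^{h}_{[0,s]},Y^{h}_{s-},Z^{h}_{s}m_{s})-f(s,M_{[0,s]},Y^{h}_{s-},Z^{h}_{s}m_{s})}$, i.e.\ evaluated at the \emph{moving} base point $(Y^{h}_{s-},Z^{h}_{s}m_{s})$, and you then obtain the uniform pointwise bound and the limit of $\zeta^{h}$ from ``the Lipschitz dependence of $f$ on $\gamma$.'' But Lipschitzness in $\gamma$ is the hypothesis {\rm (Diff')} of Theorem \ref{thm:ZdY}; Proposition \ref{prop:diff} assumes only {\rm (S), (Lip), (Diff), (D)}, and {\rm (Diff)} provides $\bbH^{2}$-convergence of the $\gamma$-quotient only at the fixed base $(y',z')=(Y_{\cdot},(Zm)_{\cdot})$. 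With your splitting, neither the pointwise domination of $\zeta^{h}$ nor its a.e.\ limit follows from the stated hypotheses, so the claimed $\bbH^{2}$-convergence $\zeta^{h}\to\zeta$ is unsupported. The paper sidesteps this by telescoping in the opposite order: the $(y,z)$-increments are routed through the shifted driver $f^{h}$, so that $\zeta^{h,u,i}_{t}=\frac{1}{h}\edg{f^{h}(t,M_{[0,t]},Y_{t-},Z_{t}m_{t})-f(t,M_{[0,t]},Y_{t-},Z_{t}m_{t})}$ sits at the fixed base and converges in $\bbH^{2}$ directly by {\rm (Diff)}, while $\eta^{h,u,i},\theta^{h,u,i}$ become (averaged) partials of $f^{h}$, bounded by $C_{y},C_{z}$ via {\rm (Lip)} and convergent a.e.\ via {\rm (D)}. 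If you adopt this reordering, your stability step still supplies the a.e.\ convergence of the evaluation points, and the remainder of your limit passage --- including the promotion of subsequential limits to a full limit by uniqueness of the solution of the linear BSDE --- is exactly the paper's proof.
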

 \begin{proof}
Note that $Y_{t}=\scY(t,M_{[0,t]}),Z_{t}=\scZ(t,M_{[0,t]})$, and
$N_{t}=\scN(t,M_{[0,t]})$, and therefore, if $u> t$, then $((\nabla^{\bf e_{i}}_{u} Y)_{t},(\nabla^{{\bf e_{i}},m}_{u}
  Z)_{t},\nabla^{{\bf e_{i}},m}_{u}N)=(0,0,0)$ because $(Y_{t},Z_{t},N_{t})$ is
  unaffected by the perturbation of $M$ at $u.$

Let us denote $M^{h}_{t}:=M_t+h {\bf e_{i}}^{*}
  \indicator{[u,T]}(t)$ and
\begin{align*}
\xi^{h}(\gamma_{[0,T]})&:=\xi(\gamma_{[0,T]}+h {\bf e_{i}}^{*}
  \indicator{[u,T]})\\
f^{h}(s,\gamma_{[0,s]},y,z)&:=f(s,(\gamma+h {\bf e_{i}}^{*}
  \indicator{[u,T]})_{[0,s]},y,z).  
\end{align*}
Note that $(\scY(t,M^{h}_{[0,t]}),\scZ(t,M^{h}_{[0,t]}),
\scN(t,M^{h}_{[0,t]}))$ is the unique solution of
BSDE($\xi^{h},f^{h}$) by Theorem \ref{prelim2}.
Let us define, for $u\leq t$,
\begin{align*}
\Xi^{h,u,i}&=\frac{\xi(M^{h}_{[0,T]})-\xi(M_{[0,T]})}{h}\\
U^{h,u,i}_{t}&=\frac{\scY(t,M^{h}_{[0,t]})-\scY(t,M_{[0,t]})}{h}\\
V^{h,u,i}_{t}&=\frac{\scZ(t,M^{h}_{[0,t]})-\scZ(t,M_{[0,T]})}{h}\\
W^{h,u,i}_{t}&:=\frac{\scN(t,M^{h}_{[0,t]})-\scN(t,M_{[0,t]})}{h}
\end{align*}
Then, for $t\geq u$, we have
\begin{align*}
U^{h,u,i}_{t}=\Xi^{h,u,i}+\int_{t}^{T}(\delta^{h,u,i}
  f)(s,M_{[0,s]},U^{h,u,i}_{s-}, V^{h,u,i}_{s}m_{s}) dA_{s}-\int_{t}^{T}V^{h,u,i}_{s}dM_{s}-W^{h,u,i}_{T}+W^{h,u,i}_{t}
\end{align*}
Here, we defined
\begin{align*}
&(\delta^{h,u,i} f)(t,M_{[0,t]},y,z)\\
&=\frac{1}{h}\edg{f^{h}(t,M_{[0,t]},\scY(t-,M_{[0,t-]})+hy,\scZ(t,M_{[0,t]})m_{t}+hz)-f(t,M_{[0,t]},\scY(t-,M_{[0,t-]}),\scZ(t,M_{[0,t]})m_{t})}\\
&= \zeta^{h,u,i}_{t}+\eta^{h,u,i}_{t}y+\theta^{h,u,i}_{t}\cdot z
\end{align*}
where
\begin{align*}
\zeta^{h,u,i}_{t}&:=
                   \frac{1}{h}\edg{f^{h}(t,M_{[0,t]},\scY(t-,M_{[0,t-]}),\scZ(t,M_{[0,t]})m_{t})-f(t,M_{[0,t]},\scY(t-,M_{[0,t-
]}),\scZ(t,M_{[0,t]}) m_{t})}\\
\eta^{h,u,i}_{t}y+\theta^{h,u,i}_{t}\cdot
  z&:=\frac{1}{h}[f^{h}(t,M_{[0,t]},\scY(t-,M_{[0,t-]})+hy,\scZ(t,M_{[0,t]})
     m_{t}+hz) \\ 
&\qquad\qquad \qquad\qquad \qquad\qquad \qquad\qquad
  -f^{h}(t,M_{[0,t]},\scY(t-,M_{[0,t-]}),\scZ(t,M_{[0,t]}) m_{t})].
\end{align*}
This BSDE has a unique solution $(U^{h,u,i}, V^{h,u,i},W^{h,u,i})\in\bbH^{2}\times\bbH^{2}_{m}\times\bbM^{2}$ because
$\Xi^{h,u,i}$ and $\delta^{h,u,i} f$ satisfies
(STD). In particular, 
$|\eta^{h,u,i}_{t}|\leq C_{y}$ and $|\theta^{h,u,i}_{t}|\leq C_{z}$ $dt\otimes
  d\bbP$-a.s. uniformly for all $h$ and $u$.
Also note that
\[
\begin{array}{ll}
\lim_{h\to0, \bbL^{2}} \Xi^{h,u,i}=\nabla^{\bf
                                    e_{i}}_{u}\xi(M_{[0,T]}), &\phantom{and}\qquad\lim_{h\to0, \bbH^{2}}\zeta^{h,u,i}=\zeta,\\
\lim_{h\to0}\eta^{h,u,i}_{t}=\eta_{t}, &\text{and}\qquad\lim_{h\to0}\theta^{h,u,i}_{t}=\theta_{t}
\end{array}\]
with $|\eta_{t}|\leq C_{y}$ and $|\theta_{t}|\leq C_{z}$. Then, by Proposition \ref{prop:stab},
\begin{align*}
\norm{U^{h,u,i}_{t}-U_{t}}^{2}_{2}&+\norm{U^{h,u,i}-U}^{2}_{\bbH^{2}}+\norm{V^{h,u,i}-V}^{2}_{\bbH_{m}^{2}}+\norm{W^{h,u,i}-W}_{\bbM^{2}}^{2}\\
&\leq
  2e^{K(2C_{y}+2C_{z}^{2}+2)}\brak{\norm{\Xi^{h,u,i}-\Xi}^{2}_{2}+\norm{\zeta^{h,u,i}-\zeta+(\eta^{h,u,i}-\eta)U+(\theta^{h,u,i}-\theta)\cdot
  V
  m}^{2}_{\bbH^{2}}}.
\end{align*}
By dominated convergence theorem, as $h\to 0$, we have
\begin{align*}
\norm{(\eta^{h,u,i}-\eta)U}_{\bbH^{2}}\to 0\qquad\text{and}\qquad
\norm{(\theta^{h,u,i}-\theta)\cdot Vm}_{\bbH^{2}} \to 0.
\end{align*}
Therefore,
\begin{align*}
U^{h,u,i}_{t}&\xrightarrow{ \bbL^{2}} U_{t}=\nabla^{\bf
               e_{i}}_{u}Y_{t}\qquad\text{for all }t\in[0,T]\\
U^{h,u,i}&\xrightarrow{\bbH^{2}} U=\nabla^{\bf e_{i}}_{u}Y\\
V^{h,u,i}&\xrightarrow{\bbH^{2}_{m}} V=\nabla^{{\bf e_{i}},m}_{u}Z\\
W^{h,u,i}&\xrightarrow{\bbM^{2}} W=\nabla^{{\bf e_{i}},N}_{u}N
\end{align*}
This implies $Y,Z$, and $N$
are $\nabla$-, $\nabla^{m}$-, and $\nabla^{N}$-differentiable, respectively, and
\[
(\nabla_{u}^{\bf e_{i}} Y,\nabla_{u}^{{\bf e_{i}},m}
Z,\nabla_{u}^{{\bf e_{i}},N} N)=(U,V,W)
\]
\end{proof}

It is widely known that the density process $Z$ can be thought as a ``derivative'' of $Y$
with respect to the driving martingale. Under the assumption that $M$
possesses martingale representation property, we can prove
this is indeed the case with our definition of path-derivative. To prove it, we need the following lemma.
\begin{lemma}
Consider $Z:=\scZ(\cdot,M_{\cdot})\in\bbH_{m}^{2}$ for
$\scZ:[0,T]\times D\to\bbR^{d\times n}$ such that $Z$ is
$\nabla^{{\bf e},m}_{u}$-differentiable. Then,
$\int_{t}^{T}Z_{s}dM_{s}$ is $\nabla_{u}^{\bf e}$-differentiable at
$u\in[0,T]$, and
\begin{align*}
\nabla^{\bf e}_{u} \int_{t}^{T}Z_{s}dM_{s}=\left\{\begin{array}{lll}&Z_{u}{\bf
  e}^{*}+\int_{u}^{T}(\nabla^{{\bf e},m}_{u}Z)_{s}dM_{s}&\qquad\text{
  for }u\in(t,T] \\
&&\\
&\int_{t}^{T}(\nabla^{{\bf
  e},m}_{u}Z)_{s}dM_{s}&\qquad\text{ for }u\in[0,t]\end{array}\right.
\end{align*}
\end{lemma}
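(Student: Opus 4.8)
The plan is to evaluate the limit defining $\nabla^{\mathbf{e}}_u$ directly, using the fact that perturbing $M$ by the deterministic jump $h\mathbf{e}^*\indicator{[u,T]}$ affects the integral $\int_t^T Z_s\,dM_s$ in two separable ways: through the integrand $\scZ(s,M_{[0,s]})$ and through the integrator $dM_s$. Consistently with the finite-time-section approximations discussed before the lemma, I would take the path-functional representation $\scV$ of the stochastic integral to be its defining $\bbL^{2}$-limit of Riemann sums $\sum_i \scZ(s_i,\gamma_{[0,s_i]})\brak{\gamma_{s_{i+1}}-\gamma_{s_i}}$ along a fixed refining sequence of partitions. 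Writing $M^h_t := M_t + h\mathbf{e}^*\indicator{[u,T]}(t)$, this same limiting procedure evaluated at $M^h$ converges (now to the semimartingale integral, since $M^h$ is a martingale plus a deterministic bounded-variation jump), and I would split $dM^h_s = dM_s + h\mathbf{e}^*\,d\indicator{[u,T]}(s)$, where $d\indicator{[u,T]}$ is the unit point mass at $u$.

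The first step is to record the decomposition
\begin{align*}
\int_t^T \scZ(s,M^h_{[0,s]})\,dM^h_s = \int_t^T \scZ(s,M^h_{[0,s]})\,dM_s + h\,\scZ(u-,M^h_{[0,u-]})\,\mathbf{e}^*\,\indicator{(t,T]}(u),
\end{align*}
where the last term is the Lebesgue--Stieltjes contribution of the jump, evaluated at the predictable left-endpoint value dictated by the Riemann-sum approximation. Since $M^h$ and $M$ coincide on $[0,u)$, one has $\scZ(u-,M^h_{[0,u-]}) = \scZ(u-,M_{[0,u-]}) = Z_{u-}$, and because $M$ is continuous this equals $Z_u$ in the $dA$-sense relevant to the integral. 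This already produces the boundary term $Z_u\mathbf{e}^*$ exactly when $u\in(t,T]$ and none when $u\le t$.

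Subtracting $\int_t^T\scZ(s,M_{[0,s]})\,dM_s$ and dividing by $h$ then gives
\begin{align*}
\frac{1}{h}\brak{\scV(M^h_{[0,T]})-\scV(M_{[0,T]})} = \int_t^T \frac{\scZ(s,M^h_{[0,s]})-\scZ(s,M_{[0,s]})}{h}\,dM_s + Z_u\mathbf{e}^*\,\indicator{(t,T]}(u).
\end{align*}
Here I would note that for $s<u$ the perturbation is inactive, so the integrand difference vanishes and the integral effectively runs over $[t\vee u,T]$. To pass to the limit I would invoke the \Ito\ isometry for $M$, which identifies $\norm{\int_t^T H_s\,dM_s}_2^2$ with $\bbE\int_t^T|H_s m_s|^2\,dA_s$, i.e. with the $\bbH_m^2$-norm of $H\indicator{[t,T]}$. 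By hypothesis the difference quotient of $\scZ$ converges to $\nabla^{\mathbf{e},m}_u Z$ in $\bbH_m^2$, so the stochastic integral converges in $\bbL^2$ to $\int_t^T(\nabla^{\mathbf{e},m}_u Z)_s\,dM_s$. Combining the two cases and recalling that $(\nabla^{\mathbf{e},m}_u Z)_s=0$ for $s<u$ (immediate from the definition, since the quotient vanishes there) yields precisely the stated formula.

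The main point to make rigorous is the first decomposition. One must check that $\scZ(\cdot,M^h_{[0,\cdot]})\in\bbH_m^2$ for small $h$, which follows because the difference quotient is bounded in $\bbH_m^2$ (it converges), so that $\int_t^T\scZ(s,M^h_{[0,s]})\,dM_s$ is a genuine $\bbL^2$ stochastic integral; and one must verify that the jump contribution is captured at the left-limit value $Z_{u-}$ rather than the right one. This last point is exactly where the choice of the Riemann-sum representation is essential: with a different representation the boundary term could change, which is precisely the representation-dependence flagged in the discussion preceding the lemma.
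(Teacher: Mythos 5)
Your proposal is correct and follows essentially the same route as the paper: the paper likewise represents the stochastic integral as the limit of Riemann sums along partitions containing $u$, splits each sum into an integrand-perturbation part and an integrator-jump part to obtain $\int_{t\vee u}^{T}\brak{\scZ(s,M^{h,u}_{[0,s]})-\scZ(s,M_{[0,s]})}dM_{s}+hZ_{u}{\bf e}^{*}\indicator{\crl{u\in(t,T]}}$, and then passes the $\bbH^{2}_{m}$-limit of the difference quotient through the stochastic integral via the isometry, exactly as you do. The subtlety you flag at the end --- that the jump term is picked up at the left endpoint and its identification with $Z_{u}$ hinges on the Riemann-sum representation --- is present (though left implicit) in the paper's argument, so your write-up is, if anything, slightly more careful on that point.
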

\begin{proof}
Let us denote $M^{h,u}_{t}:=M_{t}+h{\bf e}^{*}1_{[u,T]}(t)$. Then
\begin{align*}
\int_{t}^{T}\scZ(s,M^{h,u}_{[0,s]})dM^{h,u}_{s}=\lim_{|\Pi|\to0}\sum_{i=0}^{N}\scZ(t_{i},M^{h,u}_{[0,t_{i}]})(M^{h,u}_{t_{i+1}}-M^{h,u}_{t_{i}})
\end{align*}
where $\Pi$ is a partition $\crl{0=t_{0}\leq t_{1}\leq\cdots\leq
  t_{N}=T}$ including a point $u\in[0,T]$ and $|\Pi|$ is the largest
interval of $\Pi$. Since the limit is
convergence in probability, we can take an appropriate subsequence of
$\Pi$ so that the convergence is almost sure sense. Likewise
\begin{align*}
\int_{t}^{T}\scZ(s,M_{[0,s]})dM_{s}=\lim_{|\Pi|\to0}\sum_{i=0}^{N}\scZ(t_{i},M_{[0,t_{i}]})(M_{t_{i+1}}-M_{t_{i}})
\end{align*}
using the same sequence of partition as above, by taking another
subsequence if necessary. Then we have
\begin{align*}
&\int_{t}^{T}\scZ(s,M^{h,u}_{[0,s]})dM^{h,u}_{s}-\int_{t}^{T}\scZ(s,M_{[0,s]})dM_{s}\\
&=\lim_{|\Pi|\to0}\sum_{i=0}^{N}\edg{\scZ(t_{i},M^{h,u}_{[0,t_{i}]})(M^{h,u}_{t_{i+1}}-M^{h,u}_{t_{i}})-\scZ(t_{i},M_{[0,t_{i}]})(M_{t_{i+1}}-M_{t_{i}})}\\
&=\lim_{|\Pi|\to0}\sum_{i=0}^{N}\edg{\brak{\scZ(t_{i},M^{h,u}_{[0,t_{i}]})-\scZ(t_{i},M_{[0,t_{i}]})}(M^{h,u}_{t_{i+1}}-M^{h,u}_{t_{i}})+\scZ(t_{i},M_{[0,t_{i}]})\brak{M^{h,u}_{t_{i+1}}-M^{h,u}_{t_{i}}-M_{t_{i+1}}+M_{t_{i}}}}\\
&=\int_{t\vee u}^{T}\brak{\scZ(s,M^{h,u}_{[0,s]})-\scZ(s,M_{[0,s]})} dM_{s}+hZ_{u}{\bf e}^{*}\indicator{u\in(t,T]}.
\end{align*}
Therefore,
\begin{align*}
\nabla^{\bf e}_{u} \int_{t}^{T}Z_{s}dM_{s}&=Z_{u}{\bf
  e}^{*}\indicator{\crl{u\in(t,T]}}+\lim_{h\to0,\bbL^{2}}
  \int_{t\vee u}^{T}\frac{\scZ(s,M^{h,u}_{[0,s]})-\scZ(,M_{[0,s]})}{h}
  dM_{s}\\
&=Z_{u}{\bf
  e}^{*}\indicator{\crl{u\in(t,T]}}+\int_{t\vee u}^{T}\edg{\lim_{h\to\infty,\bbH^{2}_{m}}\frac{\scZ(\cdot,M^{h,u}_{[0,\cdot]})-\scZ(\cdot,M_{[0,\cdot]})}{h}}_{s}
  dM_{s}\\
&=Z_{u}{\bf  e}^{*}\indicator{\crl{u\in(t,T]}}+\int_{t\vee u}^{T}(\nabla^{{\bf e},m}_{u}Z)_{s}dM_{s}
\end{align*}
\end{proof}

\begin{theorem}\label{thm:ZdY0} 
Assume that $\xi$ and $f$ satisfy {\rm (S),
    (Lip), (Diff),} and {\rm (D)} and let $(Y,Z,N)$ to be the solution
  of {\rm BSDE($\xi,f$)}. Let $\scY,\scZ,\scN$ be the corresponding path
  functional as in Theorem \ref{prelim2}.
Then, \[\nabla_{u}Y_{u}=Z_{u}+\nabla_uN_u,\qquad \text{$du\otimes
d\bbP$-almost everywhere.} \]
\end{theorem}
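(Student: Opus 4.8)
The plan is to differentiate the defining equation of BSDE($\xi,f$) under a vertical perturbation of $M$ at the single time $u$, and to read off $Z_u$ from the boundary term that the preceding lemma attaches to the stochastic integral. Fix a basis vector ${\bf e_i}$ and $u\in[0,T]$, and abbreviate $(U,V,W):=(\nabla^{\bf e_i}_u Y,\nabla^{{\bf e_i},m}_u Z,\nabla^{{\bf e_i},N}_u N)$. By Proposition \ref{prop:diff} these derivatives exist, vanish on $\{t<u\}$, and on $\{t\ge u\}$ the triple $(U,V,W)$ solves the linear BSDE
\begin{align*}
U_t=\nabla^{\bf e_i}_u\xi+\int_t^T g(s,U_{s-},V_sm_s)\,dA_s-\int_t^T V_s\,dM_s-W_T+W_t,
\end{align*}
with $g(s,y,z)=\zeta_s+\eta_sy+\theta_s\cdot z$ as in that proposition. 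I would combine this with the value obtained by applying $\nabla^{\bf e_i}_u$ to the equation at a time strictly \emph{before} $u$.

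First I would apply $\nabla^{\bf e_i}_u$ to
\[
Y_t=\xi(M_{[0,T]})+\int_t^T f(s,M_{[0,s]},Y_{s-},Z_sm_s)\,dA_s-\int_t^T Z_s\,dM_s-N_T+N_t
\]
for a fixed $t<u$. Since $Y_t$ and $N_t$ depend only on $M_{[0,t]}$ while the perturbation $h{\bf e_i}^*\indicator{[u,T]}$ is supported on $[u,T]$, both the left side and the $N_t$-term have vanishing $\nabla^{\bf e_i}_u$-derivative, whereas the terminal data contribute $\nabla^{\bf e_i}_u\xi$ and $W_T$. Differentiating the $dA_s$-integral (the integrand is unperturbed for $s<u$, and for $s\ge u$ the chain rule through the path, through $Y_{s-}$ and through $Z_sm_s$ reproduces $g(s,U_{s-},V_sm_s)$, exactly as in the proof of Proposition \ref{prop:diff}) yields $\int_u^T g(s,U_{s-},V_sm_s)\,dA_s$. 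The decisive term is the stochastic integral: because $u\in(t,T]$, the preceding lemma gives $\nabla^{\bf e_i}_u\int_t^T Z_s\,dM_s=Z_u{\bf e_i}^*+\int_u^T V_s\,dM_s$. Collecting the pieces and using that the left side is $0$, I obtain
\begin{align*}
0=\nabla^{\bf e_i}_u\xi+\int_u^T g(s,U_{s-},V_sm_s)\,dA_s-Z_u{\bf e_i}^*-\int_u^T V_s\,dM_s-W_T.
\end{align*}

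Subtracting this identity from the linear BSDE above evaluated at $t=u$ makes the terminal, drift and stochastic-integral contributions cancel and leaves $U_u=Z_u{\bf e_i}^*+W_u$, that is $(\nabla^{\bf e_i}_u Y)_u=Z_u{\bf e_i}^*+(\nabla^{{\bf e_i},N}_u N)_u$. Finally I would sum over $i=1,\dots,n$: since $\sum_i {\bf e_i}^*{\bf e_i}=\bbI$ one has $\sum_i Z_u{\bf e_i}^*{\bf e_i}=Z_u$, and since $\bbM^2$-convergence forces $\bbS^2$- and hence (because $A_T\le K$) $\bbH^2$-convergence, the $\nabla^N$- and $\nabla$-derivatives of $N$ agree, so $\sum_i(\nabla^{{\bf e_i},N}_u N)_u{\bf e_i}=\nabla_u N_u$. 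This gives $\nabla_u Y_u=Z_u+\nabla_u N_u$ for $du\otimes d\bbP$-a.e. $u$.

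The step I expect to be delicate is the legitimacy of differentiating the equation at $t<u$ term by term, in particular passing $\nabla^{\bf e_i}_u$ through the $dA_s$-integral and, above all, invoking the lemma for the $dM_s$-integral so that the boundary value $Z_u{\bf e_i}^*$ genuinely appears. This is precisely where the choice of path-functional representation is essential: the stochastic integral must be read as the limit of its Riemann sums, so that perturbing $M$ also perturbs the integrator and creates the jump at $u$; the interchanges of the $L^2$-limit with the $dA_s$- and $dM_s$-integrals should then be justified by the same $\bbL^2$-, $\bbH^2$-, $\bbH^2_m$- and $\bbM^2$-convergences of the difference quotients used in Proposition \ref{prop:diff}. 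Some care is also needed at the boundary $t\uparrow u$, where one invokes $(\nabla^{\bf e_i}_u Y)_{u-}=(\nabla^{{\bf e_i},N}_u N)_{u-}=0$.
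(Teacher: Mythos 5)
Your ingredients are exactly the paper's two: Proposition \ref{prop:diff} (the linearized BSDE for $(U,V,W)$ on $[u,T]$) and the Riemann-sum lemma producing the boundary term $Z_u{\bf e_i}^*$. But the step where you deviate from the paper --- differentiating the backward equation at a fixed $t<u$ and harvesting the boundary term there --- contains a genuine gap, and it is the very point you flag without resolving. The only statement in the paper that validates the equation along the perturbed path $M^h=M+h{\bf e_i}^*\indicator{[u,T]}$ is Theorem \ref{prelim2}, and its pathwise identity keeps the stochastic integral against the \emph{unperturbed} $M$: in the $M'$-picture the jump at $u$ is carried by the orthogonal component (it is $N'$, hence the functional $\scN$, that jumps when $M'$ jumps), not by the integrator. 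If you difference that identity at $t<u$ and let $h\to 0$, you obtain
\begin{align*}
0=\nabla^{\bf e_i}_u\xi+\int_u^T g(s,U_{s-},V_sm_s)\,dA_s-\int_u^T V_s\,dM_s-W_T,
\end{align*}
with \emph{no} $Z_u{\bf e_i}^*$ term; subtracting this from the linearized BSDE at $t=u$ gives $U_u=W_u$, not $U_u=Z_u{\bf e_i}^*+W_u$. Your version of the identity, with the boundary term, requires instead that the perturbed equation hold at $t<u$ with the stochastic integral read in the Riemann-sum (perturbed-integrator) representation; since at $t<u$ the two representations differ by exactly $hZ_u{\bf e_i}^*$ (the straddling increment), that requirement is \emph{equivalent} to the assertion $U_u=Z_u{\bf e_i}^*+W_u$, i.e., to the conclusion you are proving. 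So the decisive step of your route is circular as written. A concrete test case: $\xi(\gamma)=\gamma_T$, $f\equiv 0$, $M$ a Brownian motion; then $\scY(t,\gamma)=\gamma_t$, $Z\equiv 1$, Theorem \ref{prelim2}'s $\scN$ is the jump-part functional, the frozen-integrator identity at $t<u$ holds with the jump absorbed into the $\scN$-terms, and your boundary-term identity fails by $h$.

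The paper sidesteps this by never differencing the equation strictly below $u$: it differentiates the \emph{forward} equation $Y_t=Y_0-\int_0^t f\,dA_s+\int_0^t Z_s\,dM_s+N_t$ for $t\geq u$, so that the lemma is applied to $\int_0^t Z_s\,dM_s$ with $u\in(0,t]$, establishes the $dA$-limit by the same dominated-convergence estimate you invoke, and then lets $t\searrow u$ using right-continuity of $\nabla_u Y_t$ and $\nabla_u N_t$. Your algebraic subtraction is an attractive replacement for that limiting argument, but it only becomes a proof once the $t<u$ identity is justified independently --- for instance by showing that the transported solution $\scY(\cdot,M^h_{[0,\cdot]})$ jumps by $hZ_u{\bf e_i}^*+o(h)$ at $u$, which is precisely the content of the theorem. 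The remaining elements of your write-up (the identification of the linearized driver, the interchanges of limits with the $dA_s$- and $dM_s$-integrals, the summation $\sum_i{\bf e_i}^*{\bf e_i}=\bbI$, and the passage from $\bbM^2$- to $\bbH^2$-convergence for the $N$-derivatives) are correct and match the paper's computations.
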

\begin{proof}
 Since $M$ has martingale representation property and $(Y,Z,N)$ is
 $\bbF^{M}$-adapted by Theorem \ref{prelim2}, we know $N\equiv0$ and $Y$ has continuous path. Therefore, we have the following forward SDE.
\begin{align*}
Y_{t}=Y_{0}-\int_{0}^{t} f(s,M_{[0,s]},Y_{s-},Z_{s}m_{s})dA_{s}+\int_{0}^{t}Z_{s}dM_{s}+N_t
\end{align*}
Let $u\in(0,T]$ and 
\begin{align*}
M_{s}^{h}&:=M_{s}+h{\bf e_{i}}^{*}1_{[u,T]}(s)\\
f^{h}_{s}&:=f(s,M^{h}_{[0,s]},\scY(s-,M^{h}_{[0,s-]}),\scZ(s,M^{h}_{[0,s]})m_{s})
%A_{s}^{h}&:= {\rm tr}[M^{h},M^{h}]_{s}=A_{s}+h^{2}1_{[u,T]}(s)
\end{align*}
where $\scY$ and $\scZ$ are defined as in the proof of Proposition \ref{prop:diff}.
Let us define $\zeta,\eta,\theta,\zeta^{h,u,i},\eta^{h,u,i},\theta^{h,u,i}$, and $(U^{h,u,i},V^{h,u,i}, W^{h,u,i})$ as in the proof of Proposition
\ref{prop:diff}.
 Note that $\zeta^{h,u,i}$ converges to $\zeta$ in $\bbH^2$;
$(U^{h,u,i}, V^{h,u,i}, W^{h,u,i})$ converge to $(\nabla_{u}^{\bf  e_{i}}Y,  \nabla_{u}^{{\bf e_{i}},m}Z,\nabla_{u}^{{\bf e_{i}},N}W)$ in $\bbH^{2}\times\bbH^2_m\times\bbM^2$;
$\eta^{h,u,i},\theta^{h,u,i}$ converge to $\eta,\theta$ in
$dt\otimes d\bbP$-a.e. sense; and
$\eta^{h,u,i},\theta^{h,u,i},\eta,\theta$ are bounded $dt\otimes
d\bbP$-a.e. sense.
Therefore,
\begin{align*}
&\bbE\abs{\int_{0}^{t}\brak{\frac{f_{s}^{h}-f_{s}^{0}}{h}-1_{[u,T]}(s)\edg{\zeta_{s}+\eta_{s}\nabla_{u}^{\bf
  e_{i}}Y_{s-}+\theta_{s}\cdot
  (\nabla_{u}^{{\bf e_{i}},m}Z)_{s}m_{s}}}dA_{s}}^{2}\\
& \leq K\bbE\int_{0}^{t}\abs{\frac{f_{s}^{h}-f_{s}^{0}}{h}-1_{[u,T]}(s)\edg{\zeta_{s}+\eta_{s}\nabla_{u}^{\bf
  e_{i}}Y_{s-}+\theta_{s}\cdot
  (\nabla_{u}^{{\bf e_{i}},m}Z)_{s}m_{s}}}^{2}dA_{s}\\
&\leq
  K\brak{\norm{\zeta^{h,u,i}-\zeta}_{\bbH^{2}}+\norm{\eta^{h,u,i}U^{h,u,i}-\eta
  \nabla_{u}^{\bf
  e_{i}}Y}_{\bbH^{2}}+\norm{\theta^{h,u,i}_{s}\cdot V^{h,u,i}_{s}m_{s}-\theta_{s}\cdot
  (\nabla_{u}^{{\bf e_{i}},m}Z)_{s}m_{s}}_{\bbH^{2}}}\\
&\leq K\big(\norm{\zeta^{h,u,i}-\zeta}_{\bbH^{2}}+\norm{\eta^{h,u,i}(U^{h,u,i}- \nabla_{u}^{\bf
  e_{i}}Y)}_{\bbH^{2}}+\norm{(\eta^{h,u,i}-\eta)
  \nabla_{u}^{\bf
  e_{i}}Y}_{\bbH^{2}}\\
&\qquad\qquad +\norm{\theta^{h,u,i}_{s}\cdot (V^{h,u,i}_{s}m_{s}-  (\nabla_{u}^{{\bf e_{i}},m}Z)_{s}m_{s})}_{\bbH^{2}}+\norm{(\theta^{h,u,i}_{s}-\theta_{s})\cdot
  (\nabla_{u}^{{\bf e_{i}},m}Z)_{s}m_{s}}_{\bbH^{2}}\big)
\xrightarrow{h\to0}0
\end{align*}
by dominated convergence theorem.
As a result, 
\begin{align*}
% \lim_{h\to0,\bbL^{2}}\frac{1}{h}\brak{\int_{0}^{t}f^{h}_{s}dA^{h}_{s}-\int_{0}^{t}f^{0}dA_{s}}=
  \lim_{h\to0,\bbL^{2}}\int_{0}^{t}\frac{f^{h}_{s}-f^{0}_{s}}{h}dA_{s}=\int_{u\wedge
  t}^{t}\brak{\zeta_{s}+\eta_{s}\nabla_{u}^{\bf
  e_{i}}Y_{s-}+\theta_{s}\cdot
  (\nabla_{u}^{{\bf e_{i}},m}Z)_{s}m_{s}} dA_{s}.
\end{align*}
Then, by our previous lemma, for $u\in (0,t]$
\begin{align*}
\nabla_{u}^{\bf e_{i}}Y_{t}=Z_{u}{\bf e_{i}}^{*}-\int_{u}^{t}\brak{\zeta_{s}+\eta_{s}\nabla_{u}^{\bf
  e_{i}}Y_{s-}+\theta_{s}\cdot
  (\nabla_{u}^{{\bf e_{i}},m}Z)_{s}m_{s}} dA_{s}+\int_{u}^{t}(\nabla^{{\bf e_{i}},m}_{u}Z)_{s}dM_{s}+ \nabla_{u}^{\bf e_{i}}N_{t}
\end{align*}
Since $\nabla_{u}Y_{t}$ and $\nabla_{u}N_t$ are right continuous, we prove the claim by letting $t\searrow u$.
\end{proof}
When $M$ has martingale representation property, then $N\equiv0$ and above theorem implies the following corollary which we will use in section 5 and 6.
\begin{corollary}\label{cor:ZdY0}
Assume the conditions in Theorem \ref{thm:ZdY0}. In addition, assume that 
\begin{itemize}
\item[{\rm (M)}] $M$ has martingale
representation property; that is, any $(\bbF^{M},\bbP)$ martingale $X$ such that $\bbE{\rm tr}[X,X]_T<\infty$ can be expressed as $X_t=X_0+\int_{0}^{t}Z_{s}dM_{s}$ for some $Z\in\bbH^{2}_{m}$.
\end{itemize}
Let $(Y,Z,N)$ to be the solution
of {\rm BSDE($\xi,f$)}. Let $\scY,\scZ,\scN$ be the corresponding path
functional as in Theorem \ref{prelim2}.
Then, \[\nabla_{u}Y_{u}=Z_{u},\qquad \text{$du\otimes
	d\bbP$-almost everywhere.} \]
\end{corollary}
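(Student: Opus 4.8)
The plan is to deduce the statement directly from Theorem \ref{thm:ZdY0}, whose conclusion $\nabla_{u}Y_{u}=Z_{u}+\nabla_{u}N_{u}$ already holds under the hypotheses (S), (Lip), (Diff), (D) assumed here. Thus the only additional content introduced by the corollary is that the martingale representation property (M) forces the orthogonal part $N$ to vanish identically, so that $\nabla_{u}N_{u}=0$ and the $N$-term drops out.

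First I would establish that $N\equiv 0$. By Theorem \ref{prelim2} the solution admits path-functional representations $Y_{t}=\scY(t,M_{[0,t]})$, $Z_{t}=\scZ(t,M_{[0,t]})$, $N_{t}=\scN(t,M_{[0,t]})$ in terms of the continuous martingale $M$, so in particular $N$ is a square-integrable $(\bbF^{M},\bbP)$-martingale with $N_{0}=0$. Applying (M) yields some $\zeta\in\bbH^{2}_{m}$ with $N_{t}=\int_{0}^{t}\zeta_{s}dM_{s}$. Since $(Y,Z,N)\in\bbH^{2}\times\bbH^{2}_{m}\times\bbM^{2}$ forces $[N,M]=0$ by the definition of $\bbM^{2}$, I would compute $[N,M]_{t}=\int_{0}^{t}\zeta_{s}m_{s}m_{s}^{*}dA_{s}=0$ for every $t$, conclude that the integrand $\zeta_{s}m_{s}m_{s}^{*}$ vanishes $dA_{s}\otimes d\bbP$-a.e., and then multiply on the right by $\zeta_{s}^{*}$ and take the trace to get $\abs{\zeta_{s}m_{s}}^{2}=0$ almost everywhere. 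Consequently ${\rm tr}[N,N]_{T}=\int_{0}^{T}\abs{\zeta_{s}m_{s}}^{2}dA_{s}=0$, so $N\equiv 0$.

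With $N\equiv 0$ one may take the representing functional $\scN$ to be identically zero, whence every $\nabla^{\bf e}_{u}N$ vanishes and $\nabla_{u}N_{u}=0$ for almost every $u$. Substituting this into the identity of Theorem \ref{thm:ZdY0} gives $\nabla_{u}Y_{u}=Z_{u}$, $du\otimes d\bbP$-almost everywhere, which is the claim.

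There is no serious obstacle: the corollary is essentially a specialization of Theorem \ref{thm:ZdY0}. The only point requiring care is that both the martingale representation and the orthogonality relation $[N,M]=0$ must be read with respect to the filtration $\bbF^{M}$, which is legitimate precisely because Theorem \ref{prelim2} guarantees the solution is $\bbF^{M}$-adapted; granting that, the vanishing of $N$ is a routine quadratic-variation computation.
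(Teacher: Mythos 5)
Your proposal is correct and follows essentially the route the paper intends: the paper offers no separate proof of the corollary beyond the remark preceding it that (M) forces $N\equiv 0$, after which the identity of Theorem \ref{thm:ZdY0} reduces to $\nabla_u Y_u = Z_u$. Your quadratic-variation computation ($N_t=\int_0^t\zeta_s\,dM_s$ by (M), then $[N,M]=0$ gives $\zeta_s m_s m_s^*=0$, hence $|\zeta_s m_s|^2=0$ and ${\rm tr}[N,N]_T=0$) is a correct and welcome fleshing-out of that remark, and your observation that $N$ is an $(\bbF^M,\bbP)$-martingale because Theorem \ref{prelim2} makes the solution $\bbF^M$-adapted is exactly the point the paper itself relies on in Lemma \ref{lmn:ZdY}(iii) and in the proof of Theorem \ref{thm:ZdY0}.

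One step deserves sharper wording. You write that with $N\equiv 0$ ``one may take the representing functional $\scN$ to be identically zero,'' but the corollary fixes $\scY,\scZ,\scN$ to be the functionals of Theorem \ref{prelim2}, and the paper stresses (via the jump-removal example $\scV\circ c$) that the path-derivative depends crucially on the chosen representation; replacing $\scN$ by the zero functional is therefore not legitimate as stated, since Theorem \ref{thm:ZdY0} is proved for the Theorem \ref{prelim2} functionals. The repair is one line: for each $h$, Theorem \ref{prelim2} identifies $\scN(t,M_{[0,t]}+h{\bf e_i}^*\indicator{[u,T]})$ with the orthogonal part $N^h$ of the solution of BSDE($\xi^h,f^h$), which is still driven by $M$ with data adapted to $\bbF^M$; your own quadratic-variation argument applied to $N^h$ then gives $N^h\equiv 0$, so every difference quotient of $\scN$ along the perturbations vanishes almost surely, and $\nabla^{{\bf e_i},N}_u N=0$ holds for the paper's own functional. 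With that substitution your proof is complete.
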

\section{BSDE with Locally Lipschitz Driver}\label{sec:ext}
In this section, we always assume (M). This implies $Y$ has
continuous path, therefore, $Y_{s-}=Y_{s}$ for all $s$ and $N\equiv0$. Using Corollary \ref{cor:ZdY0}, this martingale representation property enables us to bound $Z$ process by bounding $\nabla$-derivative of $Y$.
\subsection{A Priori Estimate of $Z$}
Let $k$ be an positive integer and $0=t^{k}_{0}\leq
t^{k}_{1}\leq\cdots\leq t^{k}_{k}=T$ be a partition of $[0,T]$. For $\gamma\in D$, we define
\begin{align*}
P^{(k)}(\gamma)&:=(\gamma_{t^{k}_{1}}-\gamma_{t^{k}_{0}},\cdots,
  \gamma_{t^{k}_{k}}-\gamma_{t^{k}_{k-1}})\\
L^{(k)}(a_{1},...,a_{k})&:=\sum_{i=1}^{k}a_{i}1_{[t^{k}_{i},T]}.
%P^{(k)}&:=\sum_{i=0}^{k}(\gamma_{t^{k}_{i}})1_{[t^{k}_{i},t^{k}_{i+1})}.=L^{(k)}\circ P^{(k)}
\end{align*}
% Note that $P^{(k)}\circ L^{(k)}(a_{1},...,a_{k})=(a_{1},...,a_{k})$.
Let us denote $x \in
\bbR^{kn}$ and let $\varphi \in C^{\infty}_c
(\bbR^{kn};\bbR)$ be the 
mollifier
$$
\varphi(x) := 
\left\{
\begin{array}{ll}
\lambda \exp \brak{- \frac{1}{1- |x|^2}} &\text { if } |x|<1\\
0 &\text{ otherwise}
\end{array} \right.,
$$
where the constant $\lambda \in \mathbb{R}_+$ is chosen so that $\int_{\mathbb{R}^{kn}} \varphi(x) dx = 1$.
Set $\varphi^{(k)}(x) := k^{kn} \varphi(k x)$, and define
\begin{align*}
\xi^{(k)}&:=\edg{(\xi\circ L^{(k)})*\varphi^{(k)}}\circ P^{(k)}\\
f^{(k)}(s,\gamma,y,z)&:= \int_{\bbR^{kn}}f(s,L^{(k)}(P^{(k)}(\gamma)-x'),y,z)\varphi^{(k)}(x')dx'.
\end{align*}
\begin{theorem}\label{thm:ZdY}
Assume that $\xi$ and $f$ satisfy {\rm (M), (Lip)}. In addition, assume the following condition:
\begin{itemize}
\item[{\rm (Diff')}] 
\begin{itemize}
\item $\xi(0)<\infty$ and $\int_0^T |f(s,0,0,0)|^2dA_s<\infty$.
%\item $\norm{M_{[0,T]}}_{\infty}\in\bbL^{2}$ and
\item $
\norm{M_{[0,T]}-(L^{(k)}\circ
  P^{(k)})(M_{[0,T]})}_{\infty}\xrightarrow[k\to\infty]{\bbL^{2}} 0
$.
\item There are $D_{\xi},D_{f}\in\bbR_{+}$
  such that, for all $ \gamma,\gamma'\in D,
  (s,y,z)\in[0,T]\times\bbR^{d}\times\bbR^{d\times n}$,
\begin{align*}
|\xi(\gamma)-\xi(\gamma')|\leq
                            D_{\xi}\norm{\gamma-\gamma'}_{\infty}\quad\text{
                            and}\quad
|f(s,\gamma,y,z)-f(s,\gamma',y,z)|\leq D_{f}\norm{\gamma-\gamma'}_{\infty}.
\end{align*}
\end{itemize}
\end{itemize}
Then, BSDE($\xi,f$) has a unique solution
$(Y,Z,N)\in\bbH^{2}\times\bbH^{2}_{m}\times\bbM^{2}$. Moreover, $Y$
has continuous path,
$N\equiv 0$ and 
\[
|Z_{t}|\leq \sqrt{D_{\xi}^{2}+D_{f}^{2}K}e^{\half K(2C_{y}+C_{z}^{2}+1)}\qquad dt\otimes d\bbP\text{-a.e.}
\]
\end{theorem}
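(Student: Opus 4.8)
The plan is to reduce the general coefficients to the smooth, finitely-path-dependent approximations $(\xi^{(k)},f^{(k)})$ introduced before the statement: for these one may invoke the delta-hedging formula together with the a priori estimate of Proposition \ref{bddY} to bound $Z^{(k)}$ uniformly in $k$, and then transfer the bound to $Z$ by stability. Existence, uniqueness, continuity of $Y$ and $N\equiv0$ will come directly from the Lipschitz theory of Section \ref{sec:basic}.

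First I would settle the qualitative part. From $|\xi(\gamma)|\le|\xi(0)|+D_\xi\norm{\gamma}_\infty$ and the analogous bound for $f$, condition (Diff') together with the square-integrability of $M$ and $M'$ (via Doob's and the Burkholder--Davis--Gundy inequalities) yields (S). Since (Lip) is a \emph{global} Lipschitz condition in $(y,z)$, Proposition \ref{stdbsde} (equivalently Theorem \ref{prelim2} with $h=0$) produces a unique solution $(Y,Z,N)$, and under (M) we have $N\equiv0$ and continuous $Y$, as recorded at the start of the section.

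The heart of the proof is the uniform estimate on $Z^{(k)}$. Because $(\xi,f)$ need not satisfy (Diff)/(D), I would run the argument on $(\xi^{(k)},f^{(k)})$, which are smooth (via the mollifier) and depend on finitely many increments (via $P^{(k)}$), hence satisfy (S), (Lip), (Diff) and (D); moreover they remain $C_y,C_z$-Lipschitz in $(y,z)$ since the averaging does not touch $(y,z)$. Let $(Y^{(k)},Z^{(k)},0)$ solve BSDE$(\xi^{(k)},f^{(k)})$. Fix a unit vector $\mathbf{e}\in\bbR^{1\times n}$ and $u\in(0,T]$. The perturbation $h\mathbf{e}^*\indicator{[u,T]}$ vanishes at $0$, so $\norm{L^{(k)}(P^{(k)}(h\mathbf{e}^*\indicator{[u,T]}))}_\infty\le|h|$; combined with the $\gamma$-Lipschitz bounds on $\xi,f$ this gives $|\nabla^{\mathbf{e}}_u\xi^{(k)}|\le D_\xi$ and $|\nabla^{\mathbf{e}}_u f^{(k)}|\le D_f$. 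By Proposition \ref{prop:diff}, $\nabla^{\mathbf{e}}_u Y^{(k)}$ solves a linear BSDE with terminal value $\nabla^{\mathbf{e}}_u\xi^{(k)}$ and driver $\zeta_t+\eta_ty+\theta_t\cdot z$, where $|\zeta_t|\le D_f$ (so $\int_0^T|\zeta_t|^2dA_t\le D_f^2K$), $|\eta_t|\le C_y$ and $|\theta_t|\le C_z$. Proposition \ref{bddY}, applied with $C_\xi=D_\xi$ and $C_f=D_f\sqrt K$, then yields
\[
|\nabla^{\mathbf{e}}_u Y^{(k)}_t|\le\sqrt{D_\xi^2+D_f^2K}\;e^{\frac{1}{2}K(2C_y+C_z^2+1)}
\]
uniformly in $t,u,\mathbf{e}$ and $k$. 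Setting $t=u$ and invoking the delta-hedging formula $Z^{(k)}_u\mathbf{e}^*=\nabla^{\mathbf{e}}_u Y^{(k)}_u$ of Corollary \ref{cor:ZdY0} transfers this bound to $|Z^{(k)}_u|$, $du\otimes d\bbP$-a.e.\ and uniformly in $k$.

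Finally I would pass to the limit. The $\gamma$-Lipschitz estimates, the hypothesis $\norm{M_{[0,T]}-(L^{(k)}\circ P^{(k)})(M_{[0,T]})}_\infty\to0$ in $\bbL^2$, and the vanishing mollification radius $1/k$ give $\xi^{(k)}(M_{[0,T]})\to\xi(M_{[0,T]})$ in $\bbL^2$ and the matching convergence of the drivers; the stability estimate of Proposition \ref{prop:stab} then forces $Z^{(k)}\to Z$ in $\bbH^2_{m}$, so along a subsequence $Z^{(k)}_t\to Z_t$ for $dt\otimes d\bbP$-a.e.\ $t$ and the uniform bound is inherited by $Z$. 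The main obstacle is exactly this bookkeeping: I must verify that the approximations simultaneously meet every hypothesis of Proposition \ref{prop:diff} and Corollary \ref{cor:ZdY0}, and---decisively---that $D_\xi,D_f,C_y,C_z$ are left untouched by mollification and projection, so that the $Z^{(k)}$-estimate is genuinely uniform in $k$ and survives the limit. (For $d>1$ the directional estimate controls $|Z_u\mathbf{e}^*|$ for every unit $\mathbf{e}$, i.e.\ the operator norm of $Z_u$, which coincides with $|Z_u|$ in the decisive case $d=1$.)
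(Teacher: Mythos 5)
Your overall architecture---mollified finite-dimensional approximations, the uniform derivative bound from Proposition \ref{bddY} via Proposition \ref{prop:diff}, the delta-hedging identity of Corollary \ref{cor:ZdY0}, and the passage to the limit by Proposition \ref{prop:stab}---is exactly the paper's, and your qualitative part (deriving (S) from (Diff'), existence and uniqueness, $N\equiv0$ and continuity of $Y$ under (M)) matches Lemma \ref{lmn:ZdY}. But there is one genuine gap, and you state the relevant fact yourself and then draw the wrong conclusion from it: you write that the averaging defining $f^{(k)}$ ``does not touch $(y,z)$'', yet you also claim that $f^{(k)}$ satisfies (D). Both cannot hold. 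The mollifier $\varphi^{(k)}$ acts only on the path increments $P^{(k)}(\gamma)\in\bbR^{kn}$; in the variables $(y,z)$ the function $f^{(k)}$ inherits nothing beyond the Lipschitz continuity of (Lip), so it need not be continuously differentiable in $(y,z)$ and (D) can fail. This is not bookkeeping: the differentiated BSDE in Proposition \ref{prop:diff} has coefficients $\eta_t=(\partial_y f)(t,M_{[0,t]},Y_{t-},Z_tm_t)$ and $\theta_t=(\partial_z f)(t,M_{[0,t]},Y_{t-},Z_tm_t)$, which do not exist without (D). Hence the central step of your argument---applying Proposition \ref{prop:diff} (and through it Corollary \ref{cor:ZdY0}) to $(\xi^{(k)},f^{(k)})$---is illegitimate as written.

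The paper closes exactly this hole with a \emph{second} mollification: with $x=(y,z)$ it sets $f^{(k,m)}(t,\gamma_{[0,t]},x):=\int_{\bbR^{dn+d}}f^{(k)}(t,\gamma_{[0,t]},x-x')\beta^{(m)}(x')\,dx'$, so that $(\xi^{(k)},f^{(k,m)})$ satisfies (S), (Lip), (Diff) \emph{and} (D). Your estimate then runs verbatim on $(Y^{(k,m)},Z^{(k,m)})$: since the $(y,z)$-mollification leaves $C_y,C_z,D_\xi,D_f$ untouched, one gets $|Z^{(k,m)}_t|\leq\sqrt{D_\xi^2+D_f^2K}\,e^{\half K(2C_y+C_z^2+1)}$ uniformly in both indices. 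Finally one takes the diagonal $m=k$ and passes to the limit exactly as in your last paragraph, using the additional observation that $\abs{f^{(k)}(t,\cdot)-f^{(k,k)}(t,\cdot)}\leq\brak{C_y+C_z}\int_{\bbR^{dn+d}}|x'|\beta^{(k)}(x')\,dx'\to0$ together with Lemma \ref{lmn:ZdY}(iii) and Proposition \ref{prop:stab}. With this insertion your proof coincides with the paper's; the rest of your verifications (the directional bounds $|\nabla^{\mathbf e}_u\xi^{(k)}|\leq D_\xi$ and $|\nabla^{\mathbf e}_u f^{(k)}|\leq D_f$ from the fact that the perturbation $h{\mathbf e}^*\indicator{[u,T]}$ has sup-norm $|h|$, and the transfer of the bound to $Z$ along an a.e.-convergent subsequence) are exactly what the paper does.
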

\begin{Remark}
It is easy to see that the second condition of (Diff') holds when $M$ is a Brownian motion and $t^{k}_{i}=iT/k$. More generally, let $W$ be Brownian motion and assume that $M$ has a martingale representation
\[
M_t=\int_0^t \eta_sdW_s
\]
where there exists a constant $C$ such that $|\eta_s|\leq C$ almost surely. Then
\begin{align*}
\bbE\norm{M_{[0,T]}-(L^{(k)}\circ
	P^{(k)})(M_{[0,T]})}_{\infty}^2&=\bbE\sup_{i=0,...,k-1}\sup_{t_i\leq t<t_{i+1}}\abs{\int_{t_i}^t\eta_sdW_s}^2\leq\bbE\edg{\sum_{i=0}^{k-1}\sup_{t_i\leq t<t_{i+1}}\abs{\int_{t_i}^t\eta_sdW_s}^4}^{1/2}\\
&\leq\edg{\sum_{i=0}^{k-1}\bbE\sup_{t_i\leq t<t_{i+1}}\abs{\int_{t_i}^t\eta_sdW_s}^4}^{1/2}\leq\edg{\sum_{i=0}^{k-1}\bar C\bbE\brak{\int_{t_i}^{t_{i+1}}|\eta_s|^2ds}^2}^{1/2}\\
&\leq \sqrt{\bar C}C^2\edg{\sum_{i=0}^{k-1}(T/k)^2}^{1/2}\leq \sqrt{\bar C}C^2T\frac{1}{\sqrt{k}}\xrightarrow{k\to\infty}0.
\end{align*}
The inequalities are based on
\[
\sup_{i=0,...,k-1}|a_i|\leq\sqrt{\sum_{i=0}^{k-1}|a_i|^2},
\]
Jensen inequality, and Burkholder-Davis-Gundy inequality. Here, we used $\bar C$ for the contant of Burkholder-Davis-Gundy inequality.
Therefore, such $M$ satisfies the second condition of (Diff').
\end{Remark}
Before we proceed to the proof, let us observe the following facts.
\begin{lemma}\label{lmn:ZdY} Under the assumption of Theorem \ref{thm:ZdY}, we have
  the following results.
\begin{itemize}
\item[(i)] $(\xi,f)$ satisfies {\rm (S)} and $(\xi^{(k)},f^{(k)})$ satisfies {\rm (S), (Diff), (Lip)}
\item[(ii)] $|\nabla_{u} \xi^{(k)}(M_{[0,T]})|\leq D_{\xi}$ and
$|\nabla_{u}f^{(k)}(t,M_{[0,t]},y,z)|\leq D_{f}$ for all $u\in[0,T],
(y,z)\in\bbR^{d}\times\bbR^{d\times n}$ in $dt\otimes d\bbP$-a.e. 
\item[(iii)] For solution $(Y,Z,N)\in\bbH^{2}\times\bbH^{2}_{m}\times\bbM^{2}$ of BSDE($\xi,f$), $Y$ has a
  continuous path, $N\equiv 0$, and
\begin{align*}
\xi^{(k)}(M_{[0,T]})
  \xrightarrow[k\to\infty]{\bbL^{2}}\xi(M_{[0,T]})\quad\text{and}\quad f^{(k)}(\cdot,M_{[0,\cdot]},Y_{\cdot-},(Zm)_{\cdot})\xrightarrow[k\to\infty]{\bbH^{2}}
f(\cdot, M_{[0,\cdot]},Y_{\cdot-},(Zm)_{\cdot}).
\end{align*}
\end{itemize}
\end{lemma}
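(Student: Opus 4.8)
The plan is to treat the three items in order, the common engine being that $\xi^{(k)}$ and $f^{(k)}(s,\cdot,y,z)$ factor through the increment map $P^{(k)}$ via a mollified, hence $C^{\infty}$, finite-dimensional function, so that a vertical perturbation $h{\bf e}^{*}\indicator{[u,T]}$ of the path moves exactly one coordinate block of $P^{(k)}$.

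For (i), I would first record the elementary sup-norm estimates that make everything integrable. On the support of $\varphi^{(k)}$ one has $|x'|<1/k$, so by Cauchy--Schwarz $\norm{L^{(k)}(x')}_{\infty}\le\sum_{i}|x'_{i}|\le\sqrt k\,|x'|\le 1/\sqrt k\le 1$; combined with $\norm{(L^{(k)}\circ P^{(k)})(\gamma)}_{\infty}\le 2\norm{\gamma}_{\infty}$ this gives $|\xi^{(k)}(\gamma)|\le|\xi(0)|+D_{\xi}(2\norm{\gamma}_{\infty}+1)$ and the analogous bound for $f^{(k)}(\cdot,\gamma,0,0)$ using the path-Lipschitz bounds of (Diff$'$). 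Since $M,M'\in\bbS^{2}$ (for $M'$, note $\hat M$ has a Poisson number of jumps with mean $T$, each of size at most $1$, so $[\hat M,\hat M]_{T}\le N_{T}$ and $\bbE\norm{\hat M}_{\infty}^{2}<\infty$ by Burkholder--Davis--Gundy) and $A_{T}\le K$, property (S) follows for both $(\xi,f)$ and $(\xi^{(k)},f^{(k)})$. (Lip) for $f^{(k)}$ is immediate because $\varphi^{(k)}$ is a probability density, so the $(y,z)$-Lipschitz estimate passes under the integral with the same constants $C_{y},C_{z}$. Finally $\xi^{(k)}=g^{(k)}\circ P^{(k)}$ and $f^{(k)}(s,\cdot,y,z)=h^{(k)}_{s,y,z}\circ P^{(k)}$ with $g^{(k)},h^{(k)}_{s,y,z}$ smooth mollifications of $\xi\circ L^{(k)}$ and $f(s,L^{(k)}(\cdot),y,z)$, so both lie in $\scS$; the derivative-integrability half of (Diff) is then subsumed by (ii).

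For (ii), fix $u$ with $t^{k}_{i-1}<u\le t^{k}_{i}$. A vertical bump changes only the $i$-th increment, so $P^{(k)}(M_{[0,T]}+h{\bf e}^{*}\indicator{[u,T]})$ differs from $P^{(k)}(M_{[0,T]})$ only in its $i$-th block, which is shifted by $h{\bf e}^{*}$; hence $\nabla^{\bf e}_{u}\xi^{(k)}(M_{[0,T]})=\partial_{x_{i}}g^{(k)}\big(P^{(k)}(M_{[0,T]})\big){\bf e}^{*}$, where $\partial_{x_{i}}$ is the Jacobian in the $i$-th $\bbR^{n}$-block. Perturbing that block by $v\in\bbR^{n}$ changes $L^{(k)}$ by $v\,\indicator{[t^{k}_{i},T]}$, of sup-norm $|v|$, so the $D_{\xi}$-Lipschitz bound of (Diff$'$) makes $v\mapsto(\xi\circ L^{(k)})(a+ v\text{ in block }i)$ a $D_{\xi}$-Lipschitz map; convolution with a probability density does not increase the Lipschitz constant, so $|\nabla^{\bf e}_{u}\xi^{(k)}(M_{[0,T]})|\le D_{\xi}|{\bf e}|$, which is the asserted bound. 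The identical argument applied to $f(s,\cdot,y,z)$, whose path-Lipschitz constant $D_{f}$ is uniform in $(s,y,z)$, gives $|\nabla^{\bf e}_{u}f^{(k)}(t,M_{[0,t]},y,z)|\le D_{f}|{\bf e}|$ $dt\otimes d\bbP$-a.e.; since $A_{T}\le K$ these uniform bounds also supply the $\bbL^{2}$- and $\bbH^{2}$-membership in (Diff).

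For (iii), existence and uniqueness of $(Y,Z,N)$ come from (i) and Theorem \ref{prelim}, while continuity of $Y$ and $N\equiv 0$ are exactly the consequence of (M) recorded at the start of this section. Using linearity of $L^{(k)}$ to write $L^{(k)}(P^{(k)}(\gamma)-x')=(L^{(k)}\circ P^{(k)})(\gamma)-L^{(k)}(x')$ together with the $D_{\xi},D_{f}$-Lipschitz bounds, both convergences reduce to the single path estimate
\begin{align*}
|\xi^{(k)}(M_{[0,T]})-\xi(M_{[0,T]})|\le D_{\xi}\brak{\norm{(L^{(k)}\circ P^{(k)})(M_{[0,T]})-M_{[0,T]}}_{\infty}+1/\sqrt k},
\end{align*}
and its pointwise-in-$s$ analogue for $f^{(k)}-f$ in which $M_{[0,T]}$ is replaced by the stopped path $M_{[0,s]}$. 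The first term tends to $0$ in $\bbL^{2}$ by the second bullet of (Diff$'$); the key observation is that the stopped-path error is dominated by the full-path one, $\norm{(L^{(k)}\circ P^{(k)})(M_{[0,s]})-M_{[0,s]}}_{\infty}\le\norm{(L^{(k)}\circ P^{(k)})(M_{[0,T]})-M_{[0,T]}}_{\infty}$ for every $s$, which I would verify by a short case analysis on the position of $s$ relative to the partition. Squaring, using $A_{T}\le K$ and dominated convergence then yields the $\bbL^{2}$- and $\bbH^{2}$-convergences.

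The main obstacle I anticipate is the bookkeeping in (iii): establishing that the approximation error for every stopped path $M_{[0,s]}$ is controlled, uniformly in $s$ and $\bbP$-a.e.\ in $\omega$, by the single random variable $\norm{(L^{(k)}\circ P^{(k)})(M_{[0,T]})-M_{[0,T]}}_{\infty}$ for which (Diff$'$) supplies $\bbL^{2}$-convergence; once this domination is in hand, $A_{T}\le K$ collapses the $\bbH^{2}$-norm to an $\bbL^{2}$-statement and the conclusion is routine. A secondary point to state carefully, though not a genuine difficulty, is that mollifying only in the path variable does \emph{not} regularize $f$ in $(y,z)$, which is precisely why (ii) extracts the derivative bound from the sup-norm Lipschitz constant rather than from any smoothness of $f$ in $(y,z)$.
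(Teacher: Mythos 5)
Your proposal is correct and follows essentially the same route as the paper's proof: property (S) via the linear-growth bounds from (Diff$'$) together with $M,M'\in\bbS^{2}$ (the Poisson bound on the jumps of $\hat M$), (Lip)/(Diff)/(ii) via the fact that convolution with the probability density $\varphi^{(k)}$ preserves the Lipschitz constants $C_{y},C_{z},D_{\xi},D_{f}$ and that a vertical bump at $u$ moves exactly one block of $P^{(k)}$, and (iii) via the triangle-inequality split into the discretization error $\norm{M_{[0,T]}-(L^{(k)}\circ P^{(k)})(M_{[0,T]})}_{\infty}$ and the mollification error of order $1/\sqrt{k}$. Where you go beyond the paper is only in filling in details it compresses --- notably the stopped-path domination $\norm{(L^{(k)}\circ P^{(k)})(M_{[0,s]})-M_{[0,s]}}_{\infty}\leq\norm{(L^{(k)}\circ P^{(k)})(M_{[0,T]})-M_{[0,T]}}_{\infty}$ behind the paper's ``we can argue similarly for $f^{(k)}$,'' and the careful $\sum_{i}|x'_{i}|\leq 1/\sqrt{k}$ estimate for $\norm{L^{(k)}(x')}_{\infty}$ --- so this is a refinement, not a different argument.
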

\begin{proof} It is easy to see $(\xi^{(k)},f^{(k)})$ satisfies (Lip).
Note that \begin{align*}
\norm{\xi\brak{M_{[0,T]}+\gamma}}_{2}&\leq\norm{\xi(0)+D_{\xi}\norm{M_{[0,T]}}_{\infty}+\norm{\gamma}_{\infty}}_{2}<\infty\\
\norm{\xi(M'_{[0,T]})}_{2}&\leq
            \norm{\xi(0)+D_{\xi}\norm{M_{[0,T]}}_{\infty}+D_{\xi}\norm{\hat
            M_{[0,T]}}_{\infty}}_{2}<\infty
\end{align*}
since $\norm{M_{[0,T]}}_{\infty}\in\bbL^{2}$ by Burkholder-Davis-Gundy inequality and $\norm{\hat M_{[0,T]}}_{\infty}$ is bounded by the number of
jumps of $M'$ which is a Poisson random variable. Therefore, $\xi$ satisfies (S). We can use the same arguement to show that $f$
satisfies (S) as well.
Note that
\begin{align*}
|\xi^{(k)}(\gamma)-\xi^{(k)}(\gamma')|&\leq\int_{\bbR^{kn}}\abs{\xi(L^{(k)}(P^{(k)}(\gamma)-x'))-\xi(L^{(k)}(P^{(k)}(\gamma')-x'))}\varphi^{(k)}(x')dx'\\
&\leq
  D_{\xi}\int_{\bbR^{kn}}\norm{L^{(k)}(P^{(k)}(\gamma)-x')-L^{(k)}(P^{(k)}(\gamma')-x')}_{\infty}\varphi^{(k)}(x')dx'\\
&\leq D_{\xi}\norm{\gamma-\gamma'}_{\infty}.
\end{align*}
Likewise,
\begin{align*}
|f^{(k)}(s,\gamma,y,z)-f^{(k)}(s,\gamma',y,z)|&\leq D_{f}\norm{\gamma-\gamma'}_{\infty}.
\end{align*}
Using the same argument for $(\xi,f)$, this implies $(\xi^{(k)},
f^{(k)})$ satisfies (S). Moreover, it also implies (Diff) and (ii) because of
Lipschitzness and the convolution with the mollifier
$\varphi^{(k)}$.

Lastly,  since $M$ has martingale representation property and $(Y,Z,N)$ is
 $\bbF^{M}$-adapted by Theorem \ref{prelim2}, we know $N\equiv
 0$ and $Y$ has continuous path. Also, note that
\begin{align*}
&\norm{\xi(M_{[0,T]})-\xi^{(k)}(M_{[0,T]})}_{2}\\
&\leq
  \norm{\xi(M_{[0,T]})-(\xi\circ L^{(k)}\circ
  P^{(k)})(M_{[0,T]})}_{2}+\norm{\int_{\bbR^{kn}}\abs{(\xi\circ
  L^{(k)})(P^{(k)}(\gamma))-(\xi\circ
  L^{(k)})(P^{(k)}(\gamma)-x')}\varphi^{(k)}(x')dx'}_{2}\\
&\leq D_{\xi}\norm{\norm{M_{[0,T]}- (L^{(k)}\circ
  P^{(k)})(M_{[0,T]})}_{\infty}}_{2}+D_{\xi}\norm{\int_{\bbR^{kn}}\max_{i}|x'_{i}|\varphi^{(k)}(x')dx'}_{2}\xrightarrow{k\to\infty} 0
\end{align*}
We can argue similarly for $f^{(k)}$ to conclude (iii) holds.
\end{proof}
\begin{proof}[Proof of Theorem \ref{thm:ZdY}]
Let us denote $x = (y,z) \in
\bbR^{d}\times\bbR^{d\times n}$ and let $\beta \in C^{\infty}_c
(\bbR^{d}\times\bbR^{d\times n};\bbR)$ be the 
mollifier
$$
\beta(x) := 
\left\{
\begin{array}{ll}
\lambda \exp \brak{- \frac{1}{1- |x|^2}} &\text { if } |x|<1\\
0 &\text{ otherwise}
\end{array} \right.,
$$
where the constant $\lambda \in \mathbb{R}_+$ is chosen so that $\int_{\mathbb{R}^{dn+d}} \beta(x) dx = 1$.
Set $\beta^{(m)}(x) := m^{d+dn} \beta(m x)$, $m \in \mathbb{N} \setminus \crl{0}$, and define
\[
f^{(k,m)}(t,\gamma_{[0,t]},x):= \int_{\bbR^{dn+d}} f^{(k)}(t,\gamma_{[0,t]},x-x') \beta^{(m)}(x') dx'.
\]
Then, it is easy to check that (S), (Lip), (Diff) for $f^{(k)}$ implies that $(\xi^{(k)}, f^{(k,m)})$ satisfies (S),
(Lip), (Diff), and (D). Therefore, there
exists solution $(Y^{(k,m)},Z^{(k,m)})$ of the BSDE
\begin{align*}
Y^{(k,m)}_{t}=\xi^{(k)}(M_{[0,T]})+\int_{t}^{T}f^{(k,m)}(s,M_{[0,s]},Y^{(k,m)}_{s-},Z^{(k,m)}_{s}m_{s})dA_{s}-\int_{t}^{T}Z^{(k,m)}_{s}dM_{s}
\end{align*}
From our Proposition \ref{bddY} and Proposition \ref{prop:diff}, we know
\begin{align*}
|\nabla_{u}Y^{(k,m)}_{t}|\leq\sqrt{D_{\xi}^{2}+D_{f}^{2}K}e^{\half K(2C_{y}+C_{z}^{2}+1)}
\end{align*}
for all $u\in[0,T]$.
Then, from Corollary \ref{cor:ZdY0}, for all $k,m\in\bbN$,
\[
|Z^{(k,m)}_{t}|\leq\sqrt{D_{\xi}^{2}+D_{f}^{2}K}e^{\half K(2C_{y}+C_{z}^{2}+1)}
\]$dt\otimes d\bbP$ -almost everywhere. 
By Proposition \ref{prop:stab}, we have
\begin{align*}
&\norm{Y-Y^{(k,k)}}_{\bbH^{2}}^{2}+\norm{Z-Z^{(k,k)}}^{2}_{\bbH^{2}_{m}}\\
&\leq 2e^{2K(C_{y}+C_{z}^{2}+1)}\brak{\norm{\xi(M_{[0,T]})-\xi^{(k)}(M_{[0,T]})}^{2}_{2}+\norm{f(\cdot,
  M_{[0,\cdot]}, Y_{\cdot-},Z_{\cdot}m_{\cdot})-f^{(k,k)}(\cdot,
  M_{[0,\cdot]}, Y_{\cdot-},Z_{\cdot}m_{\cdot})}_{\bbH^{2}}^{2}}.
\end{align*}
Since $f^{(k)}$ is Lipschitz,
\begin{align*}
&\abs{ f^{(k)}(t,
  M_{[0,t]}, Y_{t-},Z_{t}m_{t})-f^{(k,k)}(t,
  M_{[0,t]}, Y_{t-},Z_{t}m_{t})}\\
&\leq\abs{\int_{\bbR^{dn+d}} \brak{f^{(k)}(t,M_{[0,t]}Y_{t-},Z_{t}m_{t})-f^{(k)}(t,M_{[0,t]},(Y_{t-},Z_{t}m_{t})-x')}
  \beta^{(k)}(x') dx'}\\
&\leq \brak{C_{y}+C_{z}}\int_{\bbR^{dn+d}}|x'|\beta^{(k)}(x')dx'\xrightarrow{m\to\infty}0.
\end{align*}
Combined with the previous lemma, this implies that 
\[
\norm{f(\cdot,
  M_{[0,\cdot]}, Y_{\cdot-},Z_{\cdot}m_{\cdot})-f^{(k,k)}(\cdot,
  M_{[0,\cdot]},
  Y_{\cdot-},Z_{\cdot}m_{\cdot})}_{\bbH^{2}}\xrightarrow{k\to\infty} 0.
\]
Therefore, $Y^{(k,k)}\to Y$ in $\bbH^{2}$ and $Z^{(k,k)}\to Z$ in $\bbH_{m}^{2}$ with
$|Z^{(k,k)}_{t}|\leq\sqrt{D_{\xi}^{2}+D_{f}^{2}K}e^{\half K(2C_{y}+C_{z}^{2}+1)}.$ Therefore,
\[|Z_{t}|\leq\sqrt{D_{\xi}^{2}+D_{f}^{2}K}e^{\half K(2C_{y}+C_{z}^{2}+1)}.\]
\end{proof}

\subsection{Existence and uniqueness when $d\geq 1$ and $[M,M]_{T}$ is
small.}
\begin{theorem}\label{localmulti}
Assume the following conditions: {\rm (M), (Diff'),} and
\begin{itemize}
\item[{\rm (Loc)}] There exists a nondecreasing function 
$\rho : \mathbb{R}_+ \to \mathbb{R}_+$ such that
\[
|f(t,\gamma_{[0,t]},y,z)-f(t,\gamma_{[0,t]},y',z')| \leq \rho(|z| \vee |z'|)\brak{|y-y'|+ |z-z'|}
\]
for all $t \in[0,T]$, $y,y' \in \mathbb{R}^{d}$ and $z,z' \in
\mathbb{R}^{d\times n}$.
%\item[{\rm (M')}] The assumption {\rm (M)} holds and moreover, there exists $K_{m}$
%  such that $|m_{t}|\leq K_{m}$ $dt\otimes d\bbP$-a.e.
\end{itemize}
Assume that $K$ is small enough so that there is $R\in\bbR_{+}$ satisfying
\[
\sqrt{D_{\xi}^{2}+D_{f}^{2}K}e^{\half K(\rho(R)+1)^2}\leq R
\]
Then, the BSDE
\[
Y_{t}=\xi(M_{[0,T]})+\int_{t}^{T}f(s,M_{[0,s]},Y_{s},Z_{s}m_{s})dA_{s}-\int_{t}^{T}Z_{s}dM_{s}
\]
has a unique solution $(Y,Z)\in\bbH^{2}\times \bbH_{m}^{2}$ such that $Z$
is bounded. Moreover, $|Z_{t}|\leq R$ in $dt\otimes d\bbP$-almost
everywhere sense.
\end{theorem}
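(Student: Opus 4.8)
The plan is to run a localization (truncation) argument that turns the locally Lipschitz driver into a globally Lipschitz one already covered by Theorem \ref{thm:ZdY}, exploiting the fact that the a priori bound produced there is \emph{self-consistent} with the radius $R$ fixed by the smallness hypothesis. Concretely, I would fix the $R\in\bbR_{+}$ furnished by the assumption $\sqrt{D_{\xi}^{2}+D_{f}^{2}K}\,e^{\frac12 K(\rho(R)+1)^2}\le R$, introduce the nonexpansive projection $\pi_{R}:\bbR^{d\times n}\to\bbR^{d\times n}$ onto the closed ball of radius $R$ (so $\pi_{R}(z)=z$ if $|z|\le R$ and $\pi_{R}(z)=Rz/|z|$ otherwise), and define the truncated driver
\[
\tilde f(s,\gamma,y,z):=f(s,\gamma,y,\pi_{R}(z)).
\]
Using (Loc), that $\pi_{R}$ is $1$-Lipschitz with $|\pi_{R}(z)|\le R$, and that $\rho$ is nondecreasing, I would verify that $\tilde f$ satisfies (Lip) with $C_{y}=C_{z}=\rho(R)$: in the $y$-slot $|\tilde f(s,\gamma,y,z)-\tilde f(s,\gamma,y',z)|\le \rho(|\pi_R(z)|)|y-y'|\le \rho(R)|y-y'|$, and in the $z$-slot $|\tilde f(s,\gamma,y,z)-\tilde f(s,\gamma,y,z')|\le \rho(R)|\pi_R(z)-\pi_R(z')|\le \rho(R)|z-z'|$. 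Since $\pi_{R}$ leaves the $\gamma$-argument untouched and $\tilde f(s,0,0,0)=f(s,0,0,0)$, the pair $(\xi,\tilde f)$ inherits (Diff') from $(\xi,f)$ with the \emph{same} constants $D_{\xi},D_{f}$. Hence $(\xi,\tilde f)$ satisfies (M), (Lip), (Diff'), and Theorem \ref{thm:ZdY} applies.

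Next I would apply Theorem \ref{thm:ZdY} to $(\xi,\tilde f)$ to obtain a unique solution $(Y,Z,N)$ with $N\equiv 0$, $Y$ continuous, and, using the algebraic collapse $2C_{y}+C_{z}^{2}+1=2\rho(R)+\rho(R)^{2}+1=(\rho(R)+1)^{2}$,
\[
|Z_{t}|\le \sqrt{D_{\xi}^{2}+D_{f}^{2}K}\,e^{\frac12 K(\rho(R)+1)^{2}}\le R\qquad dt\otimes d\bbP\text{-a.e.}
\]
Because $|m_{s}|=1$ yields $|Z_{s}m_{s}|\le |Z_{s}|\le R$, the projection is inactive along the solution, i.e.\ $\pi_{R}(Z_{s}m_{s})=Z_{s}m_{s}$, so that $\tilde f(s,M_{[0,s]},Y_{s},Z_{s}m_{s})=f(s,M_{[0,s]},Y_{s},Z_{s}m_{s})$. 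Consequently $(Y,Z)$ solves the original BSDE and obeys $|Z_{t}|\le R$, which settles existence together with the claimed bound.

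For uniqueness among solutions with bounded $Z$, I would take any solution $(Y',Z')$ of the original BSDE with $|Z'_{t}|\le R'$ a.e., set $R'':=\max\{R,R'\}$, and consider the driver $f(s,\gamma,y,\pi_{R''}(z))$, which is globally Lipschitz in $(y,z)$ with constant $\rho(R'')$ and satisfies (STD) once $M$ is fixed. Since $|Z_{s}m_{s}|\le R\le R''$ and $|Z'_{s}m_{s}|\le R'\le R''$, both $(Y,Z,0)$ and $(Y',Z',0)$ solve this truncated-at-$R''$ Lipschitz BSDE, whose solution in $\bbH^{2}\times\bbH_{m}^{2}\times\bbM^{2}$ is unique by Proposition \ref{stdbsde}. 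Therefore $(Y,Z)=(Y',Z')$, and the solution constructed above is the unique bounded-$Z$ solution.

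I expect the work to be essentially bookkeeping rather than conceptually hard, but there are two points deserving care. The first is the self-referential nature of the estimate: the truncation level must be chosen to be precisely the fixed point $R$ of the a priori bound, and it is exactly the smallness of $K$ that guarantees such an $R$ exists so that the inequality $\sqrt{D_{\xi}^{2}+D_{f}^{2}K}\,e^{\frac12 K(\rho(R)+1)^{2}}\le R$ closes the loop and makes the truncation vacuous on the solution. The second is the verification that $\tilde f$ genuinely inherits every hypothesis of Theorem \ref{thm:ZdY} with the Lipschitz constants $C_{y}=C_{z}=\rho(R)$ that collapse the exponent to $(\rho(R)+1)^{2}$; the only additional subtlety is the enlargement trick $R''=\max\{R,R'\}$ in the uniqueness step, which lets a single Lipschitz uniqueness result absorb two solutions carrying different a priori bounds.
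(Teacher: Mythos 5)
Your proposal is correct and is essentially the paper's own proof: the paper truncates the driver via $g(t,\gamma_{[0,t]},y,z):=f\brak{t,\gamma_{[0,t]},y,\frac{Rz}{|z|\vee R}}$, which is exactly your projection $\pi_{R}$, applies Theorem \ref{thm:ZdY} with $C_{y}=C_{z}=\rho(R)$ so that the bound collapses to $\sqrt{D_{\xi}^{2}+D_{f}^{2}K}\,e^{\half K(\rho(R)+1)^{2}}\leq R$ and (using $|m_{t}|=1$) renders the truncation inactive on the solution. Your uniqueness step with $R''=\max\{R,R'\}$ is likewise the paper's argument with the driver truncated at $Q\geq R$, so the two proofs coincide in every essential respect.
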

\begin{proof}
For $R$ in the assumption, consider the BSDE with the terminal condition
$\xi(M_{[0,T]})$ and the driver
\[
g(t,\gamma_{[0,t]},y,z):=f\brak{t,\gamma_{[0,t]},y,\frac{Rz}{|z|\vee R}}.
\]
Then, $g$ satisfies (Diff') and (Lip) with Lipschitz coefficient of the
driver $\rho(R)=C_{y}=C_{z}$. Therefore, there
exists a unique solution $(U,V)\in\bbH^{2}\times\bbH_{m}^{2}$ for the following BSDE
\begin{align*}
U_{t}=\xi(M_{[0,T]})+\int_{t}^{T}g(s, M_{[0,s]},U_{s},V_{s}m_{s})dA_{s}-\int_{t}^{T}V_{s}dM_{s}
\end{align*}
and $V$ is bounded by
\[
|V_{t}|\leq \sqrt{D_{\xi}^{2}+D_{f}^{2}K}e^{\half K(\rho(R)+1)^2}.
\]
Therefore, since $|m_{t}|=1$ for all $t$ (see Section 2),
\[
|V_{t}m_{t}|\leq |V_{t}||m_{t}|\leq \sqrt{D_{\xi}^{2}+D_{f}^{2}K}e^{\half K(\rho(R)+1)^2}\leq R.
\]
This implies that $(U,V)$ is also a solution of 
\[
Y_{t}=\xi(M_{[0,T]})+\int_{t}^{T}f(s, M_{[0,s]},Y_{s},Z_{s}m_{s})dA_{s}-\int_{t}^{T}Z_{s}dM_{s}.
\]

Now let us show the uniqueness. Assume that $(Y',Z')$ is another
solution such that $Z'$ is bounded by $Q$. Without loss of generality,
we can assume $Q\geq R$. Then, if we consider
\[
h(t,\gamma_{[0,t]},y,z):=f\brak{t,\gamma_{[0,t]},y,\frac{Qz}{|z|\vee Q}},
\]
then BSDE($\xi,h$) has a unique solution in $\bbH^{2}\times\bbH_{m}^{2}$. Since $(Y,Z)$
and $(Y',Z')$ are both solution to such BSDE, we have $(Y,Z)=(Y',Z')$.
 \end{proof}
\subsection{Explosion of solution when $d>1$ and $[M,M]_{T}$ is large.}
\label{sec:counter}
If $d>1$, the result on the previous subsection cannot extend to arbitrary large $K$ in general.
This can be shown by the following counterexample which is inspired by Chang et al. (1992, \cite{Chang:1992p29958}).

For $\delta>0$, let $M_{t}:=\sqrt{2}(W_{t}^{\tau}-W^{\tau}_{T-\delta})\indicator{[T-\delta,\infty)}(t)$
where $W_{t}$ is a two dimensional Brownian motion and
\[
\tau:=\inf\crl{t\in[T-\delta,\infty):|W_{t}-W_{T-\delta}|\geq 1/\sqrt{2}}\wedge T.
\]
Note that $M$ satisfies (M) because of Lemma 2.1 of Peng (1991, \cite{Peng:1991ku}).
Define the terminal condition $\xi$ as
\begin{align*}
\xi(M_{T})&:=\begin{pmatrix}\cos\theta_{T}\sin g_{0}(R_{T})\\\sin\theta_{T}\sin
  g_{0} (R_{T})\\\cos g_{0} (R_{T})\end{pmatrix}
\end{align*}
where $g_{0}:\bbR_{+}\to\bbR$ is a smooth function with bounded
derivatives of all order
and  
$(R_{s},\theta_{s})$ is the polar coordinate of $M_{s}$. Note that
$\xi$ is a smooth function with bounded derivative.

We let, for $\veps\in(0,1)$, 
\begin{align*}
\phi(r):=\arccos\brak{\frac{\lambda^{2}-r^{2(1+\veps)}}{\lambda^{2}+r^{2(1+\veps)}}}
\end{align*}
where $\lambda$ is big enough so that
$\cos\phi(r)\geq(1+\veps)^{-1}$ for $r\in[0,1]$.
We choose a smooth function $g_0$ so that
\begin{align*}
g_{0}(r)\geq \arccos\brak{\frac{1-r^{2}}{1+r^{2}}} +\phi(r)
\end{align*}
for $r\in[0,1]$ and $g_0(0)=0$. Note that $g_0$ has bounded derivatives all orders on $[0,1]$.

Let us show the following BSDE driven by $M$ have a solution $(Y,Z)$
such that $Z$ is bounded only
when $[M,M]_{T}$ is small enough:
\begin{align}\label{counterex}
Y_{t}=\xi(M_{T})+\int_{t}^{T}\half|Z_{s}m_{s}|^{2}\frac{Y_{s}}{|Y_{s}|\vee 1}dA_{s}-\int_{t}^{T}Z_{s}dM_{s}.
\end{align}
Since we have
$
[M,M]_{t}:=2((s\wedge\tau-(T-\delta))\vee 0)\bbI,
$
we have $m_{s}=\frac{1}{\sqrt{2}}\bbI$ and
$A_{s}=4(s\wedge\tau-(T-\delta))\vee 0$. Note that
$\esssup_{\omega\in\Omega}A_{T}=4\delta$.
It is easy to check that (Loc) and (Diff')
if we let $\rho(x)=x+\half|x|^{2}$. Then, by Theorem \ref{localmulti}, \eqref{counterex} has a
unique solution $(Y,Z)\in\bbH^{2}\times\bbH_{m}^{2}$ such that $Z$ is bounded if $\delta$ is small
enough so that there exists $R$ such that
\[
D_{\xi}e^{2 \delta(R+ R^{2}/2+1)^{2}}\leq R
\]
where $D_{\xi}$ is the bound on the derivative of $\xi$ with respect to
$M_{T}$. In order to prove the nonexistence of solution for large $\delta$,
we need the following proposition.

\begin{proposition}\label{PDE1}
Consider the PDE of $g:[0,\infty)\times[0,1]\to\bbR$:
\begin{align*}
\partial_{t}g =\partial_{rr}g+\frac{1}{r}\partial_{r}g-\frac{\sin
  g\cos g}{r^{2}};\qquad g(0,r)=g_{0}(r), g(t,0)=0, \text{ and }g(t,1)=2\pi.
\end{align*}
This PDE admits a unique classical solution on $[0, T_{0})$ for some
$T_{0}\in\bbR_{+}$ and $ \lim_{T\to
  T_{0}}\partial_{r}g(t,0)=\infty$. % Moreover,
% \[
% \partial_{r}g(t,0)\geq C(T_{0}-t)^{-\frac{1}{1-\veps}}
% \]
% for some $C\in\bbR_{+}$.
\end{proposition}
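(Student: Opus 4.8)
The plan is to regard this equation as the corotational reduction of the harmonic map heat flow $D^2\to S^2$ and to reproduce the finite-time blow-up mechanism of Chang--Ding--Ye \cite{Chang:1992p29958}. Writing $\sin g\cos g/r^2=\sin(2g)/(2r^2)$, the linear part is the radial Laplacian $\mathcal{L}:=\partial_{rr}+r^{-1}\partial_r$. For local existence and uniqueness I would pass to the axially symmetric map $u(r,\theta)=(\cos\theta\,\sin g,\ \sin\theta\,\sin g,\ \cos g)$, for which the PDE becomes the genuinely semilinear flow $\partial_t u=\Delta u+|\nabla u|^2u$ with smooth Dirichlet data; short-time existence of a unique classical solution is then classical Eells--Sampson/Struwe theory, and $g$ inherits smoothness on $(0,1]$ with $g(t,0)=0$ preserved. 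Equivalently one checks directly that $\mathcal{L}-r^{-2}$ generates an analytic semigroup on a suitable weighted space and closes a contraction-mapping argument, the singular potential $\sin(2g)/(2r^2)=g/r^2+O(g^3/r^2)$ being tamed by $g_0(0)=0$. Either route produces a maximal existence time $T_0$ together with the blow-up alternative: either $T_0=\infty$, or the only scale-invariant quantity not controlled by the decreasing Dirichlet energy, namely $\partial_r g(\cdot,0)$, diverges as $t\uparrow T_0$.

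Next I would record the structure supplied by the scalar maximum principle: the boundary values $0,2\pi$ and the range of $g_0$ force $0\le g\le 2\pi$, and the Dirichlet energy $E(g(t))=\pi\int_0^1\big(g_r^2+r^{-2}\sin^2 g\big)\,r\,dr$ is nonincreasing along the flow. The static solutions $q_\lambda(r)=2\arctan(r/\lambda)=\arccos\!\big((\lambda^2-r^2)/(\lambda^2+r^2)\big)$ solve $\mathcal{L}q_\lambda=\sin(2q_\lambda)/(2r^2)$ exactly and, as $\lambda\downarrow0$, concentrate to $\pi$ on $(0,1]$; these bubbles, together with the steeper profile $\phi$, furnish the comparison barriers.

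The heart of the proof is the finite-time gradient blow-up, and this is where the hypotheses on $g_0$ enter. The assumption that $g_0$ over-rotates the standard bubble, $g_0(r)\ge 2\arctan r+\phi(r)$ with the steep profile $\phi(r)=\arccos\!\big((\lambda^2-r^{2(1+\varepsilon)})/(\lambda^2+r^{2(1+\varepsilon)})\big)$, is engineered to make $g_0$ too steep to relax to a smooth harmonic map. The plan is to build a time-dependent subsolution $\underline g(t,r)$ out of the modified profile $\phi$ (so that $\underline g$ is bubble-like, with $\partial_r\underline g$ peaking at $r=0$), satisfying $\underline g(0,\cdot)\le g_0$, the boundary values $0$ and $2\pi$, and contracting in finite time so that $\partial_r\underline g(t,0)\to\infty$ as $t\uparrow T_0$. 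The comparison principle then gives $g(t,\cdot)\ge\underline g(t,\cdot)$ with equality at the pinned endpoint $g(t,0)=\underline g(t,0)=0$; comparing one-sided difference quotients at $r=0$ yields $\partial_r g(t,0)\ge\partial_r\underline g(t,0)\to\infty$. Hence $T_0<\infty$ and the blow-up alternative of the local theory is realized. It is essential here to use the modified profile rather than the plain bubble $q_{\lambda(t)}$, since the unmodified contracting bubble is a supersolution and would give the wrong inequality.

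The main obstacle I anticipate is the construction and verification of this subsolution. One must tune the scale $\lambda$ and its contraction rate so that the subsolution inequality $\partial_t\underline g\le\mathcal{L}\underline g-\sin(2\underline g)/(2r^2)$ holds all the way down to $r=0$, where the singular potential is most delicate, while still forcing $\partial_r\underline g(t,0)$ to diverge in finite time. The exponent $1+\varepsilon$ in $\phi$ and the largeness condition on $\lambda$ (imposed through $\cos\phi\ge(1+\varepsilon)^{-1}$) are precisely what give the residual error terms in that inequality the correct sign near the origin; securing these signs, rather than any soft compactness argument, is the crux, and is exactly where the comparison construction of Chang--Ding--Ye \cite{Chang:1992p29958} is indispensable.
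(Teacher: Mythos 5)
Your proposal is correct and coincides with the paper's proof, which consists entirely of a citation to part (i) of the proof in Chang, Ding, and Ye \cite{Chang:1992p29958}: the local theory for the corotational harmonic map heat flow, the comparison with a shrinking subsolution built from the modified profile $\phi$ (steeper than the static bubble $2\arctan(r/\lambda)$, whose contraction would indeed go the wrong way), and the resulting divergence of $\partial_{r}g(t,0)$ are exactly the cited argument. Your reconstruction, including the role of the exponent $1+\veps$ and the condition $\cos\phi\geq(1+\veps)^{-1}$ in securing the sign of the subsolution inequality near $r=0$, faithfully fills in what the paper delegates to the reference.
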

\begin{proof}
See the
proof part (i) in Chang et al. (1992, \cite{Chang:1992p29958}).   
\end{proof}

Now, for $T_{0}$ in Proposition \ref{PDE1}, assume that $\delta>T_{0}$
and \eqref{counterex} has a solution
$(Y,Z)\in\bbH^{2}\times\bbH^{2}_{m}$ such that $Z$ is bounded. For $g(t,r)$ in
Proposition \ref{PDE1}, if we let $u:[0,T_{0})\times\bbR^{2}\to\bbR^{3}$ be
\[
u(t,x)=\begin{pmatrix}\frac{x_{1}}{|x|}\sin g(t,|x|)\\\frac{x_{2}}{|x|}\sin
g(t,|x|)\\ \cos g(t,|x|)\end{pmatrix},
\]
we can easily deduce that
\begin{align*}
Y_{t}=u(T-t, M_{t})\qquad\text{and}\qquad Z_{t}:=\nabla u(T-t,
  M_{t})\qquad t\in(T-T_{0},T]
\end{align*}
by using {\Ito} formula and the uniqueness of solution for BSDE. Note that
$\nabla u(T-t,0)\to\infty$ as $t\searrow T-T_{0}$ by Proposition
\ref{PDE1} and $(\nabla u)(t,x)$ is continuous in
$(t,x)$. Therefore, for any large $L$, there exists $\veps>0$ such
that $|\nabla u(T-t,x)|\geq L$ for all $(t,x)\in (T-T_{0},T-T_{0}+\veps)\times(-\veps,\veps)^{2}$. Since $M$ is a scaled Brownian motion starting at
$T-\delta$ and stopped at $\tau$,
\[
\bbP(|Z_{t}|\geq L \text{ for all }t\in (T-T_{0},T-T_{0}+\veps))\geq\bbP(M_{t}\in (-\veps,\veps)^{2}\text{ for all }t\in (T-T_{0},T-T_{0}+\veps))>0.
\] 
This implies  BSDE \eqref{counterex} cannot have a solution such that
$Z$ is bounded when $\delta\geq T_{0}$.
\begin{Remark}
Our counterexample above shows that there is no solution $(Y,Z)$ such
that $Z$ is bounded and therefore, Theorem \ref{localmulti} is sharp
in this sense. We do not exclude the possibility that
there is a solution $(Y,Z)\in\bbH^{2}\times\bbH^{2}_{m}$ such that $Z$
is not bounded. However,  if $\delta=T_{0}$ in above example,
 $\lim_{t\searrow T-T_{0}}Z_{t}=\infty$ almost surely.
\end{Remark}
\begin{Remark}
Another
 counterexample for the existence of solution for multidimensional
 quadratic BSDE is given by Frei and dos Reis (2012,
 \cite{Anonymous:2012p19201}). They proved the $Y$ part of solution
 explodes when the terminal condition is singular with respect to the
 perturbation of underlying martingale, i.e., Brownian motion. In our
 example, $Y$ is uniformly bounded and the terminal condition is
 smooth with bounded derivative. However, in our case, $Z$ explodes with positive
 probability.
\end{Remark}
\subsection{Existence and uniqueness when $d=1$.}
When $d=1$ and $m_{t}$ is invertible for
all $t\in[0,T]$, we can remove the smallness condition
on $K$ if we assume Lipschitzness of
$f(t,\gamma_{[0,t]},y,z)$ with respect to $y$. 
\begin{theorem}\label{onedim}
Assume that {\rm (M)} and {\rm (Diff')} hold, $d=1$, and $m_{t}$ is invertible for
all $t\in[0,T]$. In addition, assume that
\begin{itemize}
\item[{\rm (Loc')}] there exist $C_{y}\in\bbR_{+}$ and a nondecreasing function 
$\rho : \mathbb{R}_+ \to \mathbb{R}_+$ such that
$$
|f(t,\gamma_{[0,t]},y,z)-f(t,\gamma_{[0,t]},y',z')| \leq C_{y} |y-y'|+\rho(|z| \vee |z'|) |z-z'|
$$
for all $t \in[0,T]$, $y,y' \in \mathbb{R}^{d}$ and $z,z' \in
\mathbb{R}^{d\times n}$. 
\end{itemize}
Then, the BSDE
\[
Y_{t}=\xi(M_{[0,T]})+\int_{t}^{T}f(s,M_{[0,s]},Y_{s},Z_{s}m_{s})dA_{s}-\int_{t}^{T}Z_{s}dM_{s}
\]
has a unique solution $(Y,Z)\in\bbH^{2}\times \bbH_{m}^{2}$ such that $Z$
is bounded. Moreover, \[|Z_{t}|\leq \sqrt{n}\edg{\brak{D_{\xi}+\frac{D_{f}}{C_{y}}}e^{C_{y}K}-\frac{D_{f}}{C_{y}}},\; dt\otimes d\bbP\text{ -almost
everywhere.}\] In the case where $C_{y}=0$, the bound changes to
$\sqrt{n}(D_{\xi}+D_{f}K)$. If we assume {\rm (D)} and {\rm (Diff)} as well, $Y$ and $Z$ are
$\nabla$- and $\nabla^{m}$-differentiable, respectively, and $Z_{u}=\nabla_{u}Y_{u}$.
\end{theorem}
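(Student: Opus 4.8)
The plan is to follow the truncation scheme of Theorem \ref{localmulti}, but to replace its a priori bound on $Z$ (which comes from Proposition \ref{bddY} and carries a $C_z^2$, hence $\rho(R)^2$, in the exponent) by a sharper, $d=1$--specific estimate whose magnitude does \emph{not} depend on the local Lipschitz modulus $\rho(R)$. Because that bound is independent of the truncation level, there is no fixed-point inequality to solve and hence no smallness assumption on $K$. Concretely, set
\[
R:=\sqrt{n}\edg{\brak{D_{\xi}+\frac{D_{f}}{C_{y}}}e^{C_{y}K}-\frac{D_{f}}{C_{y}}}
\]
(read as $\sqrt{n}(D_\xi+D_fK)$ when $C_y=0$), and truncate the driver to $g(t,\gamma_{[0,t]},y,z):=f\brak{t,\gamma_{[0,t]},y,\frac{Rz}{|z|\vee R}}$. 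As in Theorem \ref{localmulti}, $g$ inherits (Diff') with the same $D_\xi,D_f$ and is globally Lipschitz in $(y,z)$ with constants $C_y$ and $\rho(R)$, so BSDE$(\xi,g)$ has a unique solution $(U,V)\in\bbH^2\times\bbH^2_m$ with, under (M), $N\equiv 0$ and $U$ continuous.

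The heart of the matter is the a priori bound $|V_t|\le R$. I would run the argument of Theorem \ref{thm:ZdY} essentially verbatim: mollify $\xi,g$ to $\xi^{(k)},g^{(k,m)}$ so that (Diff) and (D) hold, apply Proposition \ref{prop:diff} to obtain, for each direction $\mathbf{e}_i$ and a.e.\ $u$, the linear BSDE solved by $\nabla^{\mathbf{e}_i}_uU^{(k,m)}$, and then pass to the limit through Proposition \ref{prop:stab} and Corollary \ref{cor:ZdY0}. The only change is that this scalar ($d=1$) linear BSDE is estimated by the change-of-measure representation of the proof of Theorem \ref{compthm}, not by Proposition \ref{bddY}. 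By Lemma \ref{lmn:ZdY} it has terminal value with $|\nabla^{\mathbf{e}_i}_u\xi^{(k)}|\le D_\xi$ and driver $\zeta_t+\eta_t y+\theta_t\cdot z$ with $|\zeta_t|\le D_f$, $|\eta_t|\le C_y$ and $|\theta_t|\le\rho(R)$.

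Since $m_t$ is invertible I would remove the $z$-term by the Girsanov change of measure $d\bbQ/d\bbP=\scE\brak{\int_0^\cdot\theta_s(m_s)^{-1}dM_s}_T$. The decisive cancellation is that the quadratic variation of the exponent equals $\int_0^\cdot|\theta_s|^2dA_s\le\rho(R)^2A_\cdot$ --- the factors $(m_s)^{-1}$ and $m_s$ cancel exactly --- so Novikov's condition holds and $\bbQ\sim\bbP$ regardless of any bound on $(m_s)^{-1}$; only invertibility is used. Under $\bbQ$ the equation reduces to the scalar linear form $\nabla^{\mathbf{e}_i}_uU_t=\bbE^{\bbQ}_t\edg{\nabla^{\mathbf{e}_i}_u\xi+\int_t^T(\zeta_s+\eta_s\nabla^{\mathbf{e}_i}_uU_s)\,dA_s}$, whose explicit integrating-factor solution obeys
\[
\abs{\nabla^{\mathbf{e}_i}_uU_t}\le D_\xi e^{C_yK}+D_f\,\frac{e^{C_yK}-1}{C_y}=\brak{D_\xi+\frac{D_f}{C_y}}e^{C_yK}-\frac{D_f}{C_y},
\]
a bound \emph{free of} $\rho(R)$. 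Summing the $n$ coordinate contributions and using $V_u=\nabla_uU_u$ (Corollary \ref{cor:ZdY0}) yields $|V_u|\le R$ after letting $(k,m)\to\infty$.

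Finally, since $|m_t|=1$ gives $|V_tm_t|\le|V_t|\le R$, the truncation is inactive, so $(U,V)$ solves the untruncated BSDE with $|Z|\le R$. Uniqueness among solutions with bounded $Z$ follows exactly as in Theorem \ref{localmulti}: any other solution $(Y',Z')$ with $|Z'|\le Q$ (WLOG $Q\ge R$) also solves BSDE$(\xi,h)$ for the Lipschitz driver $h$ obtained by truncating at level $Q$, which has a unique solution, forcing $(Y',Z')=(Y,Z)$; the $\nabla$-/$\nabla^m$-differentiability statement under (D) and (Diff) is then immediate from Proposition \ref{prop:diff} and Corollary \ref{cor:ZdY0}. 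The main obstacle is precisely the a priori bound: one must verify that the locally Lipschitz $z$-dependence is \emph{entirely} absorbed into the equivalent measure --- checking along the way that $\int V\,d\tilde M$ is a genuine $\bbQ$-martingale, as in the proof of Theorem \ref{compthm} --- which is exactly where both the $d=1$ scalar structure (so the linear BSDE admits the closed-form representation) and the invertibility of $m$ (so $\theta_s(m_s)^{-1}$ is defined and the $m$-factors cancel) are indispensable, and is why the multidimensional estimate of Theorem \ref{thm:ZdY} cannot produce an $R$-independent bound.
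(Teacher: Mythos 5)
Your proof is correct, and its outer scaffolding coincides with the paper's: truncate the driver at the level $R$ fixed by the announced bound, mollify to reach (D) and (Diff), differentiate via Proposition \ref{prop:diff}, use the delta-hedging identity of Corollary \ref{cor:ZdY0}, pass to the limit by Proposition \ref{prop:stab} as in Theorem \ref{thm:ZdY}, and obtain uniqueness by re-truncating at any level $Q\ge R$. Where you genuinely diverge is the central $\rho$-free estimate. The paper never applies a measure change to the differentiated BSDE itself: it sandwiches $\nabla^{\bf e_i}_u Y$ between the solutions of the two extremal Lipschitz BSDEs \eqref{comp}, with drivers $\pm\brak{D_f+C_y|u|+\rho(R+1)|vm_s|}$, via the comparison Theorem \ref{compthm} --- this is where $d=1$ enters for the paper --- and then bounds $U_t$ and $-\bar U_t$ under $\bbP$ by multiplying with the hand-built stochastic exponential $\Gamma_t=e^{C_yA_t}\scE\brak{\rho(R+1)\int_0^\cdot h(V_sm_s)^*m_s^{-1}dM_s}_t$ involving the sign kernels $h$, arriving at $U_t\le\brak{D_\xi+D_f/C_y}e^{C_y(K-A_t)}-D_f/C_y$. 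You instead exploit that the differentiated equation is already linear and absorb its $z$-term directly by Girsanov with kernel $\theta_s(m_s)^{-1}$: for you $d=1$ enters through the exact absorbability of the scalar term $\theta_s\cdot(\nabla^{m}_uZ)_sm_s=(\nabla^{m}_uZ)_sm_s\theta_s^*$ (for $d>1$ the tensor $\theta\cdot z$ mixes rows of $z$, and no single measure change removes it), and the cancellation $\edg{\int\theta(m)^{-1}dM}_T=\int_0^T|\theta_s|^2dA_s\le\rho(R)^2K$ you highlight for Novikov is the same identity the paper uses for $\Gamma$ in Theorem \ref{compthm}. The two computations are measure-change duals of one another: your route is shorter, dispenses with the auxiliary extremal BSDEs and the comparison theorem entirely, and makes transparent why the bound is independent of $\rho(R)$ (the whole $z$-slope is moved into an equivalent measure that does not affect the estimate), while the paper's route stays under $\bbP$, recycles machinery already proved, and yields the two-sided monotone envelope $\bar U_t\le\nabla^{\bf e_i}_uY_t\le U_t$ with the same time-dependent bound. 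Two housekeeping points, both of which you flag, do go through: the paper secures (D) by a smooth extension of $f$ on $R\le|z|\le R+1$ with $|\partial_zg|\le\rho(R+1)$ rather than your hard truncation followed by mollification --- equivalent, since the radial projection is $1$-Lipschitz, so the mollified driver keeps the constants $C_y$ and $\rho(R)$, and the final bound is slope-free anyway; and your pending verification that $\int(\nabla^{m}_uZ)_s\,d\tilde M_s$ is a true $\bbQ$-martingale is routine because boundedness of $\theta$ and $A_T\le K$ give $\bbE\,\scE(L)_T^2\le e^{\rho(R)^2K}$, so Cauchy--Schwarz with $\nabla_uY\in\bbS^2(\bbP)$ and $\nabla^{m}_uZ\in\bbH^2_m(\bbP)$ supplies exactly the integrability the paper checks for $\Gamma$ in the proof of Theorem \ref{compthm}.
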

\begin{proof}
We already know that $(\xi,f)$ satisfies (S) by Lemma \ref{lmn:ZdY}.
Note that if we can prove the theorem under assumption (D), and (Diff),
we can generalize it to (Diff') using the same argument in Theorem
\ref{thm:ZdY}. Therefore, without loss of generality, we will assume
(D) and (Diff), and moreover, $D_\xi, C_{y}>0$. Let
\[
R:=\sqrt{n}\edg{\brak{D_{\xi}+\frac{D_{f}}{C_{y}}}e^{C_{y}K}-\frac{D_{f}}{C_{y}}}
\]
First let $(Y,Z)$ be the
solution of
\[
Y_{t}=\xi(M_{0,T})+\int_{t}^{T}g(s,M_{[0,s]},Y_{s},Z_{s}m_{s})dA_{s}-\int_{t}^{T}Z_{s}dM_{s}
\]
where $g$ is a smooth extension of $f$ such that
\begin{align*}
g(t,\gamma_{[0,t]},y,z):=\left\{\begin{array}{ll}f\brak{t,\gamma_{[0,t]},y,z}&\text{ if }|z|\leq R\\
f\brak{t,\gamma_{[0,t]},y,(R+1)z/|z|} &\text{ if }|z|\geq R+1
\end{array}
\right.
\end{align*}
and $|\partial_{z}g|\leq \rho(R+1)$. 
By our Proposition \ref{prop:diff} and \ref{thm:ZdY}, for $t\geq u$,
\begin{align}\label{smdiff}
\nabla_{u}^{\bf e_{i}}Y_{t}&=\Xi+\int_{t}^{T}\brak{\zeta_{s}+\eta_{s}\nabla_{u}^{\bf
  e_{i}}Y_{s}+\theta_{s} \cdot (\nabla_{u}^{{\bf e_{i}},m}Z)_{s}m_{s}} dA_{s}-\int_{t}^{T}(\nabla^{{\bf
  e_{i}},m}_{u}Z)_{s}dM_{s}
\end{align}
and $Z_{t}=\nabla_{t}Y_{t}$, 
where
\begin{align*}
\Xi&:=\nabla^{\bf e_{i}}_{u}\xi(M_{[0,T]})\\
 \zeta&:=(\nabla_{u}^{\bf e_{i}}g)(\cdot, M_{[0,\cdot]},y,z)\vline_{(y,z)=(Y_{\cdot},Z_{\cdot}m_{\cdot})}\\
\eta&:=(\partial_{y}g)(\cdot, M_{[0,\cdot]},Y_{\cdot},Z_{\cdot} m_{\cdot})\\
\theta&:=(\partial_{z}g)(\cdot, M_{[0,\cdot]},Y_{\cdot},Z_{\cdot} m_{\cdot})
\end{align*}
Let us compare \eqref{smdiff} with
\begin{align}\label{comp}
U_{t}&=D_{\xi}+\int_{t}^{T}\brak{D_{f}+C_{y}|U_{s}|+\rho(R+1)|V_{s}m_{s}|}
  dA_{s}-\int_{t}^{T}V_{s}dM_{s}\\\notag
\bar U_{t}&=-D_{\xi}+\int_{t}^{T}\brak{-D_{f}-C_{y}|\bar U_{s}|-\rho(R+1)|\bar V_{s}m_{s}|}
  dA_{s}-\int_{t}^{T}\bar V_{s}dM_{s}
\end{align}
The BSDEs \eqref{comp} have unique solutions in
$\bbH^{2}\times\bbH^{2}_{m}$ such that $U,\bar U\in\bbS^2$. Let us define
\begin{align*}
h(v)&:=\left\{\begin{array}{ll}
\frac{v^*}{|v|}&\text{if }v\neq 0\\
0&\text{otherwise}
\end{array}\right.\\
\frac{d\Gamma_t}{\Gamma_t}&=C_y h(U_s)dA_s+\rho(R+1)h(V_sm_s)^*m_s^{-1}dM_s;\qquad \Gamma_0=1.
\end{align*}
Here, we use $h$ as defined on either $\bbR$ or $\bbR^{1\times n}$ depending on the context. If we apply {\Ito} formula to $\Gamma_tU_t$,
\[
d(\Gamma_tU_t)=-D_f\Gamma_t
dA_t+\brak{\Gamma_tU_t\rho(R+1)h(V_tm_t)^*m_t^{-1}+\Gamma_tV_t}dM_t.\]
Since $h(U_t), h(V_tm_t)$ are bounded by $1$, by the same logic as in the proof of Theorem \ref{compthm}, $\Gamma\in\bbS^2$ and this implies that $\int\Gamma_s\rho(R+1)h(V_sm_s)^*m_s^{-1}dM_s$ and $\Gamma U+\int D_f\Gamma_sdA_s$ are true martingales. Therefore,
\begin{align*}
\Gamma_t&=\scE\brak{\int_0^\cdot C_y h(U_s)dA_s+\rho(R+1)\int_0^\cdot h(V_sm_s)^*m_s^{-1}dM_s}_t=\bbE\edg{\Gamma_T-C_y\int_t^T\Gamma_sh(U_s)dA_s\;\vline\;\scF_t}>0\\
U_t&=\frac{1}{\Gamma_t}\bbE\edg{\Gamma_TD_\xi+\int_t^TD_f\Gamma_sdA_s\;\vline\; \scF_t}.
\end{align*}
Since $U_t>0$, we have $h(U_s)=1$ and the first part implies that, since $\Gamma_t:=e^{C_yA_t}\scE(\rho(R+1)\int_0^\cdot h(V_sm_s)^*m_s^{-1}dM_s)_t$,
\[
\bbE\edg{\int_t^T\frac{\Gamma_s}{\Gamma_t}dA_s\;\vline\;\scF_t}\leq\frac{1}{C_y}\bbE\edg{\frac{\Gamma_T}{\Gamma_t}\;\vline\;\scF_t}-\frac{1}{C_y}\leq\frac{1}{C_y}e^{C_y(K-A_t)}-\frac{1}{C_y}.
\]
Therefore, we get
\[
U_t\leq\brak{D_\xi+\frac{D_f}{C_y}}e^{C_y(K-A_t)}-\frac{D_f}{C_y}.
\]
We can get the same upper bound for $-\bar U_t$ using the same argument. By the comparison theorem \ref{compthm}, we know \[|\nabla_{u}^{\bf
  e_{i}}Y_{t}|\leq \brak{D_{\xi}+\frac{D_{f}}{C_{y}}}e^{C_{y}(K-A_{t})}-\frac{D_{f}}{C_{y}}\leq\frac{R}{\sqrt{n}}
\] which implies $|Z_{t}m_{t}|\leq R$. Therefore,
$(Y,Z)$ is a solution of the original BSDE
\[
Y_{t}=\xi(M_{[0,T]})+\int_{t}^{T}f(s,M_{[0,s]},Y_{s},Z_{s}m_{s})dA_{s}-\int_{t}^{T}Z_{s}dM_{s}.
\]
Uniqueness can be easily checked by the same argument in the proof of
Theorem \ref{localmulti}.
\end{proof}

\section{Utility Maximization of Controlled SDE Driven by M}\label{sec:diff}
In this section, we apply the previous results to the utility maximization
problem for controlled SDEs driven by $M$. Our control is $\Delta$ and we require
\[
\Delta\in\scA:=\crl{X\in\bbH^{2}:\esssup_{(t,\omega)\in[0,T]\times\Omega}|X_{t}(\omega)|<\infty}
\]
For a given control $\Delta$,
consider one of the two SDEs driven by $M$ where $M$ satisfies (M):
\begin{align}\label{power}
X^{\Delta}_{t}&=x+\int_{0}^{t}X^{\Delta}_{s}b(s,M_{[0,s]},\Delta_{s})dA_{s}+\int_{0}^{t}X^{\Delta}_{s}\sigma(s,
  M_{[0,s]},\Delta_{s})dM_{s}\\ \label{exp}
X^{\Delta}_{t}&=x+\int_{0}^{t}b(s,M_{[0,s]},\Delta_{s})dA_{s}+\int_{0}^{t}\sigma(s,
  M_{[0,s]},\Delta_{s})dM_{s}
\end{align}
Here, 
$
b:[0,T]\times D\times\bbR^{1\times n}\to\bbR
                          \text{ and }\sigma:[0,T]\times D\times\bbR^{1\times
                          n}\to\bbR^{1\times n}
$
are jointly measurable functions such that $b(\cdot,
M_{[0,\cdot]},\Delta_{\cdot})$ and $\sigma(\cdot, M_{[0,\cdot]},\Delta_{\cdot})$
are predictable for any $\Delta\in\scA$.

\subsection{Power utility} 
Our objective in this subsection is to find a control $\Delta\in\scA$ that maximize
\[
\bbE\edg{ \frac{1}{\kappa}\brak{X^{\Delta}_{T}e^{-\xi(M_{[0,T]})}}^{\kappa}}
\]
where $\kappa\in(-\infty,0)\cup(0,1]$ and $X^{\Delta}$ is given by
\eqref{power}.
\begin{theorem}\label{app0} Assume that there exist an increasing
  continuous function
  $\rho:\bbR_{+}\to\bbR$ and a $\scB([0,T])\otimes\scD\otimes\scB(\bbR^{n})$-measurable function
  $k:[0,T]\times D\times\bbR^{1\times n}\to\bbR^{1\times n}$ that
  satisfy the following conditions:
\begin{itemize}
\item $|b(s,M_{[0,s]},\pi)|+|\sigma(s,M_{[0,s]},\pi)|\leq \rho(|\pi|)$ for all
  $\pi\in\bbR^{1\times n}$.
\item $|k(s,\gamma,z)|\leq \rho(|z|)$ for all
  $(s,\gamma,z)\in[0,T]\times D\times\bbR^{1\times n}$.
\item For all
$(s,\gamma,\pi,z)\in[0,T]\times D\times\scA\times\bbR^{d\times n}$,
the following inequality holds:
\begin{align*}
&G(s,\gamma,\pi,z):=-\half\kappa|\sigma(s,\gamma_{[0,s]},\pi) m_{s}-z|^{2}-b(s,\gamma_{[0,s]},\pi)+\half|\sigma(s,\gamma_{[0,s]},\pi) m_{s}|^{2}\\
&\geq G(s,\gamma_{[0,s]},k(s,\gamma_{[0,s]},z),z):=f(s,\gamma_{[0,s]},z)
%-\half\kappa|\sigma(s,\gamma_{[0,s]},k(s,\gamma_{[0,s]},z)) m_{s}-z|^{2}+b(s,\gamma_{[0,s]},k(s,\gamma_{[0,s]},z)) 
\end{align*}
\item The BSDE$(\xi,f)$ has a solution
$(Y,Z)\in\bbH^{2}\times\bbH^{2}_{m}$ such that $Z$ is bounded.
\end{itemize}
Then,
\[
\bar\Delta_{s}:=k(s,M_{[o,s]},Z_{s}m_{s})\in\scA
\]
is the optimal control and the optimal value is $\frac{x^{\kappa}e^{-\kappa Y_{0}}}{\kappa}$.
\end{theorem}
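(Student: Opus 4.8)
The plan is to invoke the martingale optimality principle of Hu et al. (2005, \cite{Hu:2005tj}). For an arbitrary control $\Delta\in\scA$ I would introduce
\[
R^{\Delta}_{t}:=\frac{1}{\kappa}\brak{X^{\Delta}_{t}}^{\kappa}e^{-\kappa Y_{t}},
\]
where $(Y,Z)\in\bbH^{2}\times\bbH^{2}_{m}$ is the solution of BSDE$(\xi,f)$ granted by the last hypothesis and $f(s,\gamma,z)=G(s,\gamma,k(s,\gamma,z),z)$. Because $X^{\Delta}_{0}=x$ and $Y_{T}=\xi(M_{[0,T]})$, the boundary values are the constant $R^{\Delta}_{0}=\frac{1}{\kappa}x^{\kappa}e^{-\kappa Y_{0}}$, which does not depend on $\Delta$, and $R^{\Delta}_{T}=\frac{1}{\kappa}\brak{X^{\Delta}_{T}e^{-\xi(M_{[0,T]})}}^{\kappa}$, which is exactly the objective. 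The aim is thus to prove that $R^{\Delta}$ is a supermartingale for every $\Delta\in\scA$ and a true martingale for $\bar\Delta$, since then $\bbE[R^{\Delta}_{T}]\le R^{\Delta}_{0}=R^{\bar\Delta}_{0}=\bbE[R^{\bar\Delta}_{T}]$ immediately yields optimality and the value $\frac{1}{\kappa}x^{\kappa}e^{-\kappa Y_{0}}$.

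The computational heart is an \Ito{} expansion. Writing $b_{s}:=b(s,M_{[0,s]},\Delta_{s})$, $\sigma_{s}:=\sigma(s,M_{[0,s]},\Delta_{s})$, and $f_{s}:=f(s,M_{[0,s]},Z_{s}m_{s})$, I would use $d\log X^{\Delta}_{t}=\brak{b_{t}-\half\abs{\sigma_{t}m_{t}}^{2}}dA_{t}+\sigma_{t}dM_{t}$ from \eqref{power} together with $dY_{t}=-f_{t}\,dA_{t}+Z_{t}dM_{t}$. Applying \Ito{} to $R^{\Delta}_{t}=\frac{1}{\kappa}\exp\brak{\kappa\log X^{\Delta}_{t}-\kappa Y_{t}}$ produces a local-martingale part $\int_{0}^{\cdot}\brak{X^{\Delta}_{s}}^{\kappa}e^{-\kappa Y_{s}}(\sigma_{s}-Z_{s})dM_{s}$ and, after collecting the $dA$-terms and completing the square, a drift equal to
\[
\brak{X^{\Delta}_{t}}^{\kappa}e^{-\kappa Y_{t}}\brak{f_{t}-G(t,M_{[0,t]},\Delta_{t},Z_{t}m_{t})}\,dA_{t}.
\]
The prefactor $\brak{X^{\Delta}_{t}}^{\kappa}e^{-\kappa Y_{t}}$ is strictly positive and $dA_{t}\ge0$, so the sign of the drift is governed solely by $f_{t}-G(t,M_{[0,t]},\Delta_{t},Z_{t}m_{t})$. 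By the third hypothesis $G(s,\gamma,\pi,z)\ge f(s,\gamma,z)$ for all $\pi$, hence this bracket is $\le0$ and $R^{\Delta}$ is a local supermartingale; and choosing $\bar\Delta_{t}:=k(t,M_{[0,t]},Z_{t}m_{t})$ attains equality $G=f$, making $R^{\bar\Delta}$ a local martingale. Admissibility of $\bar\Delta$ follows from $\abs{\bar\Delta_{t}}\le\rho(\abs{Z_{t}m_{t}})\le\rho(\norm{Z}_{\infty})$ (using $\abs{m_{t}}=1$, the boundedness of $Z$ from the last hypothesis, and monotonicity of $\rho$), predictability coming from the measurability of $k$ and $Z$; the boundedness of $b(\cdot,M,\bar\Delta),\sigma(\cdot,M,\bar\Delta)$ then guarantees $X^{\bar\Delta}$ is a well-defined stochastic exponential.

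The main obstacle is upgrading these \emph{local} statements to genuine (super)martingale statements, which is where the integrability must be controlled. When $\kappa\in(0,1]$ one has $R^{\Delta}>0$, so a nonnegative local supermartingale is a supermartingale by Fatou, and the supermartingale inequality $\bbE[R^{\Delta}_{T}]\le R^{\Delta}_{0}$ holds for free. The delicate points are the \emph{martingale} property of $R^{\bar\Delta}$ (one needs the \emph{equality} $\bbE[R^{\bar\Delta}_{T}]=R^{\bar\Delta}_{0}$, not merely $\le$) and the supermartingale property when $\kappa<0$, where $R^{\Delta}<0$ and nonnegativity is useless. Here I would establish moment estimates: since $\bar\Delta$ and $Z$ are bounded and $A_{T}\le K$, the exponential $X^{\bar\Delta}$ has $\bbL^{p}$-moments of every order, while $e^{-\kappa Y}$ enjoys exponential moments because $Y$ inherits the Gaussian-type integrability of the Lipschitz functional $\xi(M_{[0,T]})$ and of $\norm{M_{[0,T]}}_{\infty}$; a H\"older argument then shows the family $\crl{R^{\bar\Delta}_{\tau}:\tau\text{ stopping time}}$ is uniformly integrable, so the bounded local martingale $R^{\bar\Delta}$ is a true martingale, and analogous bounds (uniform over bounded $\Delta$) give the supermartingale property in the case $\kappa<0$. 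Once these integrability facts are secured, the chain $\bbE[R^{\Delta}_{T}]\le R^{\Delta}_{0}=R^{\bar\Delta}_{0}=\bbE[R^{\bar\Delta}_{T}]$ completes the proof.
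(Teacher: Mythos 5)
Your proposal follows the same skeleton as the paper's proof: the martingale optimality principle of Hu et al.\ with exactly the family $R^{\Delta}=U^{\Delta}=\frac{1}{\kappa}(X^{\Delta}e^{-Y})^{\kappa}$, the same completion of the square producing the drift $(X^{\Delta})^{\kappa}e^{-\kappa Y}\brak{f-G}\,dA$, the same use of $G\geq f$ with equality at $k$, and the same admissibility check for $\bar\Delta$. Where you diverge is the upgrade from local to true (super)martingale. The paper never performs your It\^o-plus-UI argument: instead it solves the forward dynamics in closed form and factorizes
\[
U^{\Delta}_{t}=\frac{x^{\kappa}e^{-\kappa Y_{0}}}{\kappa}\,\scE\brak{\kappa\int_{0}^{\cdot}\brak{\sigma^{\Delta}_{s}-Z_{s}}dM_{s}}_{t}\exp\brak{\kappa\int_{0}^{t}\brak{f(s,M_{[0,s]},Z_{s}m_{s})-G(s,M_{[0,s]},\Delta_{s},Z_{s}m_{s})}dA_{s}},
\]
where the stochastic exponential is a true martingale because the integrand is bounded and $A_{T}\leq K$ (BMO, then Kazamaki), and the last factor is a \emph{bounded monotone} multiplicative perturbation, since $b,\sigma,k,Z$ are all bounded along the relevant arguments. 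This factorization makes every integrability issue disappear simultaneously for $\kappa\in(0,1]$ and $\kappa<0$, with no case split and no moment estimates on $Y$ or $X^{\Delta}$ separately.

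The soft spot in your route is the justification of the exponential moments of $e^{-\kappa Y}$: you derive them from ``the Gaussian-type integrability of the Lipschitz functional $\xi(M_{[0,T]})$,'' but Lipschitz continuity of $\xi$ is \emph{not} among the hypotheses of this theorem --- the only assumption on the BSDE is that a solution $(Y,Z)\in\bbH^{2}\times\bbH^{2}_{m}$ with bounded $Z$ exists. As written, this step is unsupported. It is repairable from the stated hypotheses: since $|k(s,\gamma,z)|\leq\rho(|z|)$ and $|b|+|\sigma|\leq\rho(|\pi|)$, the driver $f(s,M_{[0,s]},Z_{s}m_{s})$ is bounded along the solution, so the forward representation $Y_{t}=Y_{0}-\int_{0}^{t}f\,dA_{s}+\int_{0}^{t}Z_{s}dM_{s}$ exhibits $Y_{t}-Y_{0}$ as a bounded drift plus a martingale with quadratic variation at most $\norm{Z}_{\infty}^{2}K$; by Dambis--Dubins--Schwarz this has Gaussian tails uniformly in $t$, which yields the moments you need --- but note that this repair is essentially the paper's factorization in disguise, since $(X^{\bar\Delta}_{t})^{\kappa}e^{-\kappa Y_{t}}$ then coincides with $x^{\kappa}e^{-\kappa Y_{0}}\scE_{t}$ up to a bounded factor. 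A second, smaller point: for $\kappa<0$ you must also verify that the nonincreasing drift part of $R^{\Delta}$ is integrable and that the local-martingale part $\int(X^{\Delta})^{\kappa}e^{-\kappa Y}(\sigma-Z)\,dM$ is a true martingale for \emph{every} bounded $\Delta$, not just $\bar\Delta$; the same boundedness observations handle this, but it should be said, whereas the paper's multiplicative form gives the supermartingale property for all $\Delta$ in one line from positivity of $\scE$ and monotonicity of the bounded exponential factor.
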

\begin{proof}
First of all, since $M_{[0,\cdot]}:[0,T]\times\Omega\to D$ is an
  adapted continuous function, note that $\bar\Delta$ is a predictable
  process because it is a deterministic measurable function of predictable
  processes. Moreover, since $Z$ is bounded, $\bar\Delta$ is bounded by
  our assumption. Therefore, $\bar\Delta\in\scA$. 

As in Hu et al. (2005, \cite{Hu:2005tj}), we use the martingale
technique to prove the theorem. Note that,  since
$b(s,M_{[0,s]},\Delta_{s})$ and $\sigma(s,M_{[0,s]},\Delta_{s})$ are bounded for $\Delta\in\scA$,
\eqref{power} has a unique strong solution
\[
X^{\Delta}_{t}=x\exp\brak{\int_{0}^{t}\brak{b(s,M_{[0,s]},\Delta_{s})-\half|\sigma(s,M_{[0,s]},\Delta_{s})m_{s}|^{2}}dA_{s}+\int_{0}^{t}\sigma(s,M_{[0,s]},\Delta_{s})dM_{s}}.
\]
For notational convenience, let us use
$b^{\Delta}_{s}:=b(s,M_{[0,s]},\Delta_{s})$ and $\sigma^{\Delta}_{s}:=\sigma(s,M_{[0,s]},\Delta_{s})$.
Let us define a family of stochastic process $\crl{U^{\Delta}}_{\Delta\in\scA}$ given
by
\begin{align*}
&U^{\Delta}_{t}= \frac{1}{\kappa}(X^{\Delta}_{t}e^{-Y_{t}})^{\kappa}\\
&=\frac{x^{\kappa}}{\kappa}\exp\brak{-\kappa Y_{0}+\kappa\int_{0}^{t}\brak{b^{\Delta}_{s}-\half|\sigma^{\Delta}_{s}m_{s}|^{2}+f(s,
  M_{[0,s]},Z_{s}m_{s})}dA_{s}+\kappa\int_{0}^{t}\brak{\sigma^{\Delta}_{s}-Z_{s}}dM_{s}}\\
&=\frac{x^{\kappa}e^{-\kappa Y_{0}}}{\kappa}\scE\brak{\kappa\int_{0}^{\cdot}\brak{\sigma^{\Delta}_{s}-Z_{s}}dM_{s}}_{t}\exp\brak{\kappa\int_{0}^{t}\brak{b^{\Delta}_{s}-\half|\sigma^{\Delta}_{s}m_{s}|^{2}+\frac{\kappa}{2}|\sigma^{\Delta}_{s}m_{s}-Z_{s}m_{s}|^{2}+f(s,
  M_{[0,s]},Z_{s}m_{s})}dA_{s}}\\
&=\frac{x^{\kappa}e^{-\kappa Y_{0}}}{\kappa}\scE\brak{\kappa\int_{0}^{\cdot}\brak{\sigma^{\Delta}_{s}-Z_{s}}dM_{s}}_{t}\exp\brak{\kappa\int_{0}^{t}\brak{f(s,
  M_{[0,s]},Z_{s}m_{s})-G(s,M_{[0,s]},\Delta_{s},Z_{s}m_{s})}dA_{s}}
\end{align*}
Since
\begin{align*}
&\edg{-\int_{0}^{\cdot}\kappa\brak{\sigma^{\Delta}_{s}-Z_{s}}dM_{s},-\int_{0}^{\cdot}\kappa\brak{\sigma^{\Delta}_{s}-Z_{s}}dM_{s}}_{T}\leq\kappa^{2}\int_{0}^{T}\abs{(\sigma^{\Delta}_{s}-Z_{s})m_{s}}^{2}dA_{s}
\end{align*}
and
\[
\abs{(\sigma^{\Delta}_{s}-Z_{s})m_{s}}\leq \rho(|\Delta_{s}|)+|Z_{s}|,
\]
$\int_{0}^{\cdot}\kappa\brak{\sigma(s,M_{[0,s]},\Delta_{s})-Z_{s}}dM_{s}$
is a BMO martingale for each $\Delta\in\scA$. Therefore, \[\scE\brak{-\int_{0}^{\cdot}\kappa\brak{\sigma(s,M_{[0,s]},\Delta_{s})-Z_{s}}dM_{s}}_{t}\]
is a true martingale: see Kazamaki (1994, \cite{Kazamaki:1994ux}). 
Moreover, note that $\frac{1}{\kappa}e^{\kappa x}$ is an increasing
function and
\[
f(s,M_{[0,s]},Z_{s}m_{s})-G(s,M_{[0,s]},\Delta_{s},Z_{s}m_{s})\leq 0
\]
where equality holds when $\Delta_{s}=\bar\Delta_{s}=k(s,
M_{[0,s]},Z_{s}m_{s})$. Therefore, $U^{\Delta}$ are
supermartingales and $U^{\bar\Delta}$ is a true martingale. This implies
that
\[
\bbE\edg{ \frac{1}{\kappa}(X^{\Delta}_{T}e^{-\xi(M_{[0,T]})})^{\kappa}}\leq U^{\Delta}_{0}=\frac{x^{\kappa}e^{-\kappa Y_{0}}}{\kappa} =U^{\bar\Delta}_{0}=\bbE\edg{ \frac{1}{\kappa}(X^{\bar\Delta}_{T}e^{-\xi(M_{[0,T]})})^{\kappa}}.
\]
Therefore, $\bar\Delta$ is the optimal control and the claim is proved.
\end{proof}
\subsection{Exponential utility}
In this subsection, our objective is to find a control $\Delta\in\scA$ that maximize
\[
\bbE\edg{ -\exp(-\kappa(X^{\Delta}_{T}-\xi(M_{[0,T]})))}
\]
where $\kappa>0$ and $X^{\Delta}$ is given by \eqref{exp}.

\begin{theorem}\label{app} Assume that there exist $C\in\bbR_{+}$, an increasing
  continuous function
  $\rho:\bbR_{+}\to\bbR$, and a $\scB([0,T])\otimes\scD\otimes\scB(\bbR^{n})$-measurable function
  $k:[0,T]\times D\times\bbR^{1\times n}\to\bbR^{1\times n}$ that
  satisfy the following conditions:
\begin{itemize}
\item $X^{\Delta}$ is well-defined for any $\Delta\in\scA$.
\item $|\sigma(s,M_{[0,s]},\pi)|\leq \rho(|\pi|)$ for all
  $\pi\in\bbR^{1\times n}$.
\item $|k(s,\gamma,z)|\leq \rho(|z|)$ for all
  $(s,\gamma,z)\in[0,T]\times D\times\bbR^{1\times n}$.% and $k(\cdot,M_{[0,\cdot]},X_{\cdot})$ is  predictable for $X\in\bbH^{2}_{m}$.
\item For all
$(s,\gamma,\pi,z)\in[0,T]\times D\times\scA\times\bbR^{d\times n}$,
the following inequality holds:
\begin{align*}
&G(s,\gamma,\pi,z):=\half\kappa|\sigma(s,\gamma_{[0,s]},\pi) m_{s}-z|^{2}
-b(s,\gamma_{[0,s]},\pi)\geq G(s,\gamma_{[0,s]},k(s,\gamma_{[0,s]},z),z):=f(s,\gamma_{[0,s]},z)
%-\half\kappa|\sigma(s,\gamma_{[0,s]},k(s,\gamma_{[0,s]},z)) m_{s}-z|^{2}+b(s,\gamma_{[0,s]},k(s,\gamma_{[0,s]},z)) 
\end{align*}
\item The BSDE$(\xi,f)$ has a solution
$(Y,Z)\in\bbH^{2}\times\bbH^{2}_{m}$ such that $Z$ is bounded.
\end{itemize}
Then,
\[
\bar\Delta_{s}:=k(s,M_{[o,s]},Z_{s}m_{s})\in\scA
\]
is the optimal control and the optimal value is $-e^{-\kappa(x-Y_{0})}$.
\end{theorem}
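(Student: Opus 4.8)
The plan is to follow the martingale method of Hu et al. (2005, \cite{Hu:2005tj}), exactly as in the proof of Theorem \ref{app0}, but replacing the multiplicative ansatz of the power case by the additive one appropriate to exponential utility. First I would check that $\bar\Delta$ is admissible: it is predictable, being a measurable function of the predictable data $(s, M_{[0,s]}, Z_s m_s)$, and since $|m_s| = 1$ gives $|Z_s m_s| \le |Z_s|$ with $Z$ bounded by hypothesis, the growth bound $|k(s,\gamma,z)| \le \rho(|z|)$ makes $\bar\Delta$ bounded, so $\bar\Delta \in \scA$. Then I would introduce the family
\[
U^\Delta_t := -\exp\brak{-\kappa\brak{X^\Delta_t - Y_t}}, \qquad \Delta \in \scA,
\]
where $(Y,Z) \in \bbH^2 \times \bbH^2_m$ is the given solution of BSDE$(\xi,f)$. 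Since $Y_T = \xi(M_{[0,T]})$, we have $U^\Delta_T = -\exp(-\kappa(X^\Delta_T - \xi(M_{[0,T]})))$, the terminal objective, while $U^\Delta_0 = -e^{-\kappa(x - Y_0)}$ is independent of $\Delta$.

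The heart of the argument is to show $U^\Delta$ is a supermartingale for every $\Delta \in \scA$ and a true martingale for $\Delta = \bar\Delta$. Writing $b^\Delta_s := b(s, M_{[0,s]}, \Delta_s)$ and $\sigma^\Delta_s := \sigma(s, M_{[0,s]}, \Delta_s)$ and using the forward forms of \eqref{exp} and of BSDE$(\xi,f)$ (recall $N \equiv 0$ under (M)), I would compute
\[
X^\Delta_t - Y_t = (x - Y_0) + \int_0^t \brak{b^\Delta_s + f(s, M_{[0,s]}, Z_s m_s)}\,dA_s + \int_0^t \brak{\sigma^\Delta_s - Z_s}\,dM_s.
\]
Substituting into $U^\Delta$ and extracting the stochastic exponential, $U^\Delta_t$ factors as
\[
U^\Delta_t = -e^{-\kappa(x - Y_0)}\,\scE\brak{-\kappa\int_0^\cdot \brak{\sigma^\Delta_s - Z_s}\,dM_s}_t\,\exp\brak{\kappa\int_0^t \brak{G^\Delta_s - f(s, M_{[0,s]}, Z_s m_s)}\,dA_s},
\]
where $G^\Delta_s := G(s, M_{[0,s]}, \Delta_s, Z_s m_s)$; the identity $\kappa G^\Delta_s = \half\kappa^2|(\sigma^\Delta_s - Z_s)m_s|^2 - \kappa b^\Delta_s$ is precisely what the $\scE$-correction produces. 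By the standing assumption $G^\Delta_s \ge f(s, M_{[0,s]}, Z_s m_s)$, the last factor $D_t$ is nondecreasing with $D_0 = 1$, and it collapses to $D_t \equiv 1$ exactly when $\Delta_s = \bar\Delta_s = k(s, M_{[0,s]}, Z_s m_s)$, since then equality holds in $G \ge f$.

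Finally I would settle integrability. Because $\Delta \in \scA$ is bounded, $|\sigma^\Delta_s| \le \rho(|\Delta_s|)$ is bounded, $Z$ is bounded, and $|m_s| = 1$, the integrand $(\sigma^\Delta_s - Z_s) m_s$ is bounded; together with $A_T \le K$ this makes $\int_0^\cdot \kappa(\sigma^\Delta_s - Z_s)\,dM_s$ a BMO martingale, so by Kazamaki (1994, \cite{Kazamaki:1994ux}) its stochastic exponential $L_t$ is a uniformly integrable true martingale. Since $D$ is adapted, of finite variation, and nondecreasing, for $s \le t$ we get $\bbE_s[L_t D_t] \ge D_s\,\bbE_s[L_t] = L_s D_s$, i.e. $LD$ is a submartingale, whence $U^\Delta = -e^{-\kappa(x-Y_0)} LD$ is a supermartingale; for $\bar\Delta$, $D \equiv 1$ gives $U^{\bar\Delta} = -e^{-\kappa(x-Y_0)} L$, a true martingale. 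The supermartingale bound then yields $\bbE[-\exp(-\kappa(X^\Delta_T - \xi))] = \bbE[U^\Delta_T] \le U^\Delta_0 = -e^{-\kappa(x-Y_0)}$ for every $\Delta$, with equality attained at $\bar\Delta$, which proves optimality and identifies the value. I expect the main obstacle to be this integrability step: verifying the BMO property so that $U^\Delta$ is a genuine supermartingale rather than merely a local one, which is exactly where the boundedness of $Z$, the boundedness of admissible controls, and the bound $A_T \le K$ are indispensable.
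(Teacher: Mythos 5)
Your proposal is correct and follows essentially the same route as the paper: the same ansatz $U^{\Delta}_t=-\exp(-\kappa(X^{\Delta}_t-Y_t))$, the same factorization into $-e^{-\kappa(x-Y_0)}$ times a stochastic exponential times $\exp\brak{\kappa\int_0^t(G^{\Delta}_s-f_s)\,dA_s}$, the same BMO/Kazamaki argument for the true-martingale property, and the same supermartingale comparison with equality at $\bar\Delta$. Your explicit verification that $LD$ is a submartingale (using $L\ge 0$, $D$ nondecreasing) merely spells out a step the paper leaves implicit, so no substantive difference remains.
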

\begin{proof} By the same argument in the proof of Theorem \ref{app0},
  we can easily show that $\bar\Delta\in\scA$. 

For $\Delta\in\scA$, we define a family of stochastic process
$\crl{U^{\Delta}}$ given by
\begin{align*}
U^{\Delta}_{t}=-\exp\brak{-\kappa(X^{\Delta}_{t}-Y_{t}) }.
\end{align*}
For notational convenience, let us use
$b^{\Delta}_{s}:=b(s,M_{[0,s]},\Delta_{s})$ and $\sigma^{\Delta}_{s}:=\sigma(s,M_{[0,s]},\Delta_{s})$.
Note that
we have
\begin{align*}
&U^{\Delta}_{t}=-\exp\brak{-\kappa\brak{x-Y_{0}+\int_{0}^{t}\brak{b^{\Delta}_{s}+f(s,
  M_{[0,s]},Z_{s}m_{s})}dA_{s}+\int_{0}^{t}\brak{\sigma^{\Delta}_{s}-Z_{s}}dM_{s}}
             }\\
&=-e^{-\kappa(x-Y_{0})}\scE\brak{-\int_{0}^{\cdot}\kappa\brak{\sigma^{\Delta}_{s}-Z_{s}}dM_{s}}_{t}\exp\brak{\kappa\int_{0}^{t}\brak{G(s,M_{[0,s]},\Delta_{s},Z_{s}m_{s})-f(s,M_{[0,s]},Z_{s}m_{s})}dA_{s}}
\end{align*}
By the same logic in the proof of Theorem \ref{app0},
$\scE\brak{-\int_{0}^{\cdot}\kappa\brak{\sigma^{\Delta}_{s}-Z_{s}}dM_{s}}$
is a true martingale.
Since
\begin{align*}
G(s,M_{[0,s]},\Delta_{s},Z_{s}m_{s})\geq f(s,M_{[0,s]},Z_{s}m_{s})
\end{align*}
and equality holds when
$\Delta_{s}=\bar\Delta_{s}=k(s,M_{[0,s]},Z_{s}m_{s})$, $U^{\Delta}$ are
supermartingales and $U^{\bar\Delta}$ is a true martingale. This implies
that
\[
\bbE\edg{-\exp\brak{-\kappa(X^{\Delta}_{T}-Y_{T}) }}\leq U^{\Delta}_{0}= -e^{-\kappa(x-Y_{0})}=U^{\bar\Delta}_{0}=\bbE\edg{-\exp\brak{-\kappa(X^{\bar\Delta}_{T}-Y_{T}) }}.
\]
Therefore, $\bar\Delta$ is the optimal control and the claim is proved.
\end{proof}
\section{Application to Optimal Portfolio Selection}\label{sec:ops}
In this section, we apply above result to a
market consists of 1 risk-free asset with no interest and $n$ risky
assets $S^{1},...,S^{n}$ whose prices follow dynamics
\[
\frac{dS^{i}_{t}}{S^{i}_{t}}=\theta^{i}dA_{t}+dM^{i}_{t}.
\]
where $\theta\in\bbR^{n\times 1}$ and $M$ satisfies (M) and (Diff'). In addition
we always assume that $m_{s}$ is invertible and deterministic and $\xi$ satisfies (Diff')
.
\subsection{Power utility function}
In this case, the $\Delta^{i}_{t}$ represent the portion of
wealth invested on asset $S^{i}$ at time $t$. We also assume
that, if the investor invest $\Delta$
on time $[t,t+dt]$, there is cost $X^{\Delta}_tp(\Delta_{t})dt$
for some function $p:\bbR\to\bbR$ which we call a penalty function. Then, if the initial wealth is $x$, the  wealth process associated with $\Delta$ is given by
\begin{align*}
dX^{\Delta}_{t}=X^{\Delta}_{t}(-p(\Delta_{t})+\Delta_{t}\theta)dA_{t}+X^{\Delta}_{t}\Delta_{t}dM_{t};\qquad
  X^{\Delta}_{0}=x.
\end{align*}
The investor tries to maximize
\[
\bbE\edg{ \frac{1}{\kappa}(X^{\Delta}_{T}e^{-\xi(M_{[0,T]})})^{\kappa}}
\]
where $\kappa\in(-\infty, 0)\cup(0,1]$. Then, the function $G$ and $k$ in Theorem \ref{app0} are given by
\begin{align*}
G(s,\gamma,\pi,z)&:=-\half\kappa|\pi m_{s}-z|^{2}+p(\pi)-\pi\theta+\half|\pi m_{s}|^{2}\\
k(s,\gamma,z)&\in{\rm argmin}_{\pi\in\scA}G(s,\gamma,\pi,z).
\end{align*}
Let us provides three specific examples. The first example is the case
where the investment is restricted to some closed set. This is
similar to Hu et al. (2005, \cite{Hu:2005tj}), but we are seeking the optimal
investment that does not require infinite investment.

\begin{Example}[Incomplete Market]
For an investor, there may be a restriction on the investment strategy
$\Delta$ so that $\Delta_{t}(\omega)\in\scC$ for some closed set $\scC\subset\bbR^{1\times n}$, then we can define
  $p(\pi)=0$ when $\pi\in \scC$ and $\infty$ otherwise. When $\kappa<1$, the minimum
  of $G$ is attained when 
\[
\pi=\Pi_{s}\brak{\frac{\theta^{*}-\kappa z}{1-\kappa}m_s^{-1}}:=k(s,\gamma,z)
\]
where $\Pi_{s}$
  is the projection to the set $\scC m_{s}:=\crl{xm_{s}:x\in
    \scC}$. Since $k$ does not depend on $\gamma$ and Lipschitz in $z$, 
$f(s,z):=G(s,\gamma,k(s,\gamma,z),z)$ satisfies (Diff') and (Loc'), by Theorem \ref{onedim}, BSDE($\xi,f$)
has a unique solution $(Y,Z)\in\bbH^{2}\times\bbH^{2}_{m}$ such that
$Z$ is bounded. Therefore, by Theorem \ref{app0}, the optimal control
is
\[
\bar\Delta_s:=\Pi_{s}\brak{\frac{\theta^{*}m_s^{-1}-\kappa Z_{s}}{1-\kappa}}
\]
and the optimal value is $\frac{x^{\kappa}e^{-\kappa Y_{0}}}{\kappa}$.\\
On the other hand, if $\kappa=1$ and $\scC$ is a bounded set,
\[
G(s,\gamma,\pi,z):=\pi m_{s}(z^{*}-m_{s}^{-1}\theta)-\half |z|^{2}+p(\pi)
\]
attains minimum $f(s,z)$ when $\pi m_{s}(z^{*}-m_{s}^{-1}\theta)$ is
minimized over $\pi\in\scC$. Note that $f(s,z)$ does not depend on $\gamma$ and
satisfies (Loc') where $\rho$ has linear growth. Therefore, BSDE($\xi,f$) has a unique solution
$(Y,Z) \in\bbH^{2}\times\bbH^{2}_{m}$ such that $Z$ is bounded and the optimal control is
\[
\bar\Delta_{s}:={\rm arg}\min_{\pi\in\scC}(\pi m_{s}(m_{s}^{*}Z^{*}_{s}-m_{s}^{-1}\theta))
\]
and  the optimal value is $\frac{x^{\kappa}e^{-\kappa Y_{0}}}{\kappa}$.
\end{Example}

The next example is the case where the penalty function encourage the
diversification of portfolio among risky assets. The penalty function
is given by $p(\pi):=\abs{\pi(\bbI-w)}^{\beta}$ where $\beta>1$ and $w$ is a $n$-by-$n$ matrix where
  $(\bbI-w)(\bbI-w)^{*}+\frac{1-\kappa}{2}m_{s}m_{s}^{*}$ is
  invertible. Note that
  $p$ attains minimum when $\pi=\pi w$, so we can think $\pi w$ is the
  preferred distribution of wealth. If one want to encourage
  diversification of portfolio, one can set
  $w^{ij}\in[1/n-\veps,1/n+\veps],\forall i,j$, for small
  $\veps>0$. In the extreme case, if one set $\veps=0$, the penalty attains minimum when $\pi^{i}=\pi^{j}$ for all $i,j=1,...,n$. However, in this case,
  $\bbI-w$ is not invertible so we encounter problem when $\kappa=1$.
The bigger $\beta$ implies the stronger encouragement toward the
preferred wealth distribution. 

\begin{Example}\label{ex30}[Diversification of Portfolio] Assume that
\[
p(\pi):=\abs{\pi(\bbI-w)}^{\beta}.
\] 
Note that $p$ is a convex function which is
  differentiable everywhere and therefore,
  $G$ has a minimum when it satisfies the first order condition:
\[
\nabla p(\pi)+\brak{1-\kappa} \pi m_{s}m_{s}^{*}=-\kappa zm_{s}^{*}+\theta^{*}.
\]
(Case $\kappa<1$) To find an explicit solution, let us consider the case where
$\beta=2$ and $\kappa<1$. Then, the first order condition becomes
\[
2\pi\brak{\bbI-w}\brak{\bbI-w}^{*}+(1-\kappa) \pi m_{s}m_{s}^{*}=-\kappa zm_{s}^{*}+\theta^{*}
\]
and this implies
\[
\pi=k(s,z):=\half(-\kappa zm_{s}^{*}+\theta^{*})\brak{(\bbI-w)(\bbI-w)^{*}+\frac{1-\kappa}{2}m_{s}m_{s}^{*}}^{-1}.
\]
Let $f(s,z):=G(s,\gamma,k(s,z),z)$. Since $k$ is an affine function of
$z$, $f$ satisfies (Loc') with $\rho$ that has linear growth, and the BSDE($\xi,f$)
has a unique solution $(Y,Z)\in\bbH^{2}\times\bbH^{2}_{m}$ such that $Z$ is bounded. Therefore, the
optimal control is
\[
\bar \Delta_{s}=\half(-\kappa Z_{s}m_{s}m_{s}^{*}+\theta^{*})\brak{(\bbI-w)(\bbI-w)^{*}+\frac{1-\kappa}{2}m_{s}m_{s}^{*}}^{-1}.
\]
(Case $\kappa=1$) On the other hand, let us assume $\kappa=1$ which means that the
investor is risk-neutral but there is a penalty if the portfolio is away from $w$. Then the first order condition becomes
\[
\beta\abs{\pi\brak{\bbI-w}}^{\beta-2}\pi\brak{\bbI-w}(\bbI-w)^{*}=-zm_{s}^{*}+\theta^{*}
\]
and this implies
% \[
% \pi\brak{\bbI-w}=\brak{\frac{|(-zm_{s}^{*}+\theta^{*})((\bbI-w)^{*})^{-1}|^{2-\beta}}{\beta}}^{1/(\beta-1)}(-zm_{s}^{*}+\theta^{*})((\bbI-w)^{*})^{-1}
% \] Therefore, 
\[
k(s,z)=\brak{\frac{|(-zm_{s}^{*}+\theta^{*})((\bbI-w)^{*})^{-1}|^{2-\beta}}{\beta}}^{1/(\beta-1)}(-zm_{s}^{*}+\theta^{*})((\bbI-w) (\bbI-w)^{*})^{-1}
\]
Therefore,
\[
f(s,z):=\abs{\frac{(-zm_{s}^{*}+\theta^{*})( (\bbI-w)^{*})^{-1}}{\beta}}^{\frac{\beta}{\beta-1}}-k(s,z
)(\theta-m_{s}z^*)-\half|z|^{2}
\]
Note that when $\beta\geq2$, (Loc') is satisfied $\rho$ with linear
growth. On the other hand, if $1<\beta<2$, (Loc') is satisfied with
$\rho$ that has superlinear growth. Therefore, in any case,
BSDE($(\xi,f)$) has a unique solution $(Y',Z')$ such that $Z'$ is
bounded, and we have optimal control
\[
\bar\Delta=\brak{\frac{|(-Z'_{s}m_{s}m_{s}^{*}+\theta^{*})((\bbI-w)^{*})^{-1}|^{2-\beta}}{\beta}}^{1/(\beta-1)}(-Z'_{s}m_{s}m_{s}^{*}+\theta^{*})((\bbI-w) (\bbI-w)^{*})^{-1}
\]
\end{Example}
\begin{Remark}
In above example, if $\kappa=1$ and $\beta<2$, the driver $f(s,z)$ of BSDE has
superquadratic growth in $z$. As in Delbaen et al. (2010, \cite{Delbaen:2010gj}), this BSDE may not have a solution or may have
infinite number of solution if we do not assume (Diff'). Due to this
difficulty, as far as the author knows, above example is the first financial application of superquadratic BSDE.
\end{Remark}
The last example is when there is information processing cost for
trading assets. Any investment on an asset requires constant follow-up
of information. It may be possible that this information
processing cost is so high that it is more reasonable not to trade the
asset at all.
\begin{Example}[Information Processing Cost]
Assume that $\theta=0$ and $m_{s}:=n^{-\half}\bbI$ where $\bbI$ is
  the $n$-by-$n$ identity matrix. For each $i=1,...,n$, risky assets $S^{i}$ may require  information processing cost $C_{i}$ if the investor has nonzero amount of asset $S^{i}$. This implies $p(\pi)=\sum_{i=1}^{n}C_{i}
  \indicator{\crl{\pi^{i}\neq 0}}$. Then,
\begin{align*}
G(s,\gamma,\pi,z)&=\frac{1-\kappa}{2n}|\pi|^{2}+\sum_{i=1}^{n}C_{i}
  \indicator{\crl{\pi^{i}\neq 0}}+\frac{\kappa}{\sqrt{n}}\pi z^{*}-\frac{\kappa}{2}|z|^{2}\\
 &=\frac{1-\kappa}{2n}\sum_{i=1}^{n}\edg{\brak{\pi^{i}+\frac{\kappa\sqrt{n}}{1-\kappa}z^{i}}^{2}+C_{i}
  \indicator{\crl{\pi^{i}\neq 0}}-\frac{\kappa n}{(1-\kappa)^{2}}|z^{i}|^{2}}
\end{align*}
Therefore, $G$ is minimized when
\[
\pi^{i}=k^{i}(s,z):=\left\{\begin{array}{ll} -\frac{\kappa\sqrt{n}}{1-\kappa}z^{i}&\text{when
                                            }|z^{i}|>\frac{1-\kappa}{|\kappa|}\sqrt{\frac {C_{i}}{n}}\\ 0&\text{otherwise}\end{array}\right.
\]
for each $i=1,...,n$. If we define
\[
f(s,z):=G(s,\gamma,k(s,z),z)=\frac{1-\kappa}{2n}\sum_{i=1}^{n}\min\brak{-\frac{\kappa n}{(1-\kappa)}|z^{i}|^{2},C_{i}-\frac{\kappa n}{(1-\kappa)^{2}}|z^{i}|^{2}}
\]
Since $f$ is satisfies (Loc') where $\rho$ has linear growth, the BSDE($\xi,f$) has a unique solution $(Y,Z)$ such that $Z$ is bounded. Therefore, the
optimal control is
\[
\bar\Delta^{j}_{s}=\left\{\begin{array}{ll} -\frac{\kappa}{1-\kappa}Z_{s}^{i}&\text{when
                                            }|Z_{s}^{i}|>\frac{1-\kappa}{|\kappa|}\sqrt{C_{i}}\\ 0&\text{otherwise}\end{array}\right.
\]
\end{Example}

\bigskip

\subsection{Exponential utility function}
At time
$t\in[0,T]$, an investor
invest $\Delta^{i}_{t}$ unit of currency on the asset $S^{i}$.  If the investor invest $\Delta$
on time $[t,t+dt]$, there is cost $p(\Delta_{t})dt$
for some function $p:\bbR\to\bbR$ which we call a penalty function.
Then, if the initial wealth is $x$, the  wealth process associated with $\Delta$ is given by
\begin{align*}
dX^{\Delta}_{t}=(-p(\Delta_{t})+\Delta_{t}\theta)dA_{t}+\Delta_{t}dM_{t};\qquad
  X^{\Delta}_{0}=x.
\end{align*}
Assume that the investor trying to maximize
\[
\bbE \edg{-\exp\brak{- \kappa (X^{\Delta}_{T}-\xi(M_{[0,T]}))}}
\]
for some $\kappa>0$. Then, $G$ and
$k$ in Theorem \ref{app} are given by
\begin{align*}
G(s,\gamma,\pi,z)&:=\half\kappa|\pi m_{s}-z|^{2}+p(\pi)-\pi\theta \\
k(s,\gamma,z)&\in{\rm argmin}_{\pi\in\scA}G(s,\gamma,\pi,z)
\end{align*}
and we can apply Theorem \ref{app} if the assumptions are satisfied.

The followings are three examples which are analogous to the power utility
examples in the previous sections.

\begin{Example}[Incomplete Market]
We assume that $\theta=0$ and the investment $\Delta_{t}(\omega)$
should be in some closed set $\scC\subset\bbR^{1\times n}$. In this case, we set
  $p(\pi)=0$ when $\pi\in \scC$ and $\infty$ otherwise. Then, the minimum
  of $G$ is attained when $\pi=(\Pi_{s}z)m_{s}^{-1}$ where $\Pi_{s}$
  is the projection to the set $\scC m_{s}$. Since $f(s,z):=\half\kappa|\Pi_{s}z-z|^{2}$
satisfies (Diff') and (Loc'), by Theorem \ref{onedim}, BSDE($\xi,f$)
has a unique solution $(Y,Z)$ such that
$Z$ is bounded. Therefore, by Theorem \ref{app}, the optimal control
is
$\bar\Delta_{s}:=\Pi_{s}(Z_{s}m_{s})=\Pi (Z_{s})$
where $\Pi$ is the projection to set $\scC$. 
\end{Example}

\begin{Example}[Diversification of Portfolio] Let $w$ be an $n$-by-$n$
  matrix and assume that
  $(\bbI-w)(\bbI-w)^{*}+\kappa
  m_{s}m_{s}^{*}/2$ is invertible for all $s\in[0,T]$. We set
$p(\pi):=\abs{\pi(\bbI-w)}^{2}$.
Note that $p$ is a convex function with
  differentiable everywhere and therefore,
  $G$ has a minimum when
\[
\nabla p(\pi)+\kappa \pi m_{s}m_{s}^{*}=\kappa zm_{s}^{*}+\theta^{*}.
\]
Therefore, $G$ is minimized when
\begin{align*}
\pi&=\half (\kappa zm_{s}^{*}+\theta^{*})\edg{(\bbI-w)(\bbI-w)^{*}+\half\kappa
  m_{s}m_{s}^{*}}^{-1}=:k(s,z). \end{align*}
Let $f(s,z):=G(s,\gamma,k(s,z),z)$. Since $k$ is a linear function of $z$, the BSDE($\xi,f$)
has a unique solution $(Y,Z)$ such that $Z$ is bounded. Therefore, the
optimal control is
\[
\bar\Delta_{s}=\half (\kappa Z_{s}m_{s}m_{s}^{*}+\theta^{*})\edg{(\bbI-w)(\bbI-w)^{*}+\half\kappa
  m_{s}m_{s}^{*}}^{-1}.
\]
\end{Example}

\begin{Example}[Information Processing Cost]
Assume that $\theta=0$ and $m_{s}:=n^{-\half}\bbI$ where $\bbI$ is
  the $n$-by-$n$ identity matrix. For each $i=1,...,n$, risky assets $S^{i}$ may require  information processing cost $C_{i}$ if the investor decide to trade
  $S^{i}$. This implies $p(\pi)=\sum_{i=1}^{n}C_{i}
  \indicator{\crl{\pi^{i}\neq 0}}$. Then,
\[
G(s,\gamma,\pi,z)=\half\kappa\sum_{j=1}^{n}\edg{\abs{\frac{\pi^{j}}{\sqrt{n}}-z^{j}}^{2}+\frac{2 C_{j}}{\kappa}\indicator{\crl{\pi^{j}\neq
    0}}}
\]
Therefore, $G$ is minimized when
\[
\pi^{j}=k^{j}(s,z):=\left\{\begin{array}{ll} \sqrt{n}z^{j}&\text{when
                                            }|z^{j}|>\sqrt{\frac {2C_{j}}{\kappa}}\\ 0&\text{otherwise}\end{array}\right.
\]
for each $j=1,...,n$. If we define
\[
f(s,z):=G(s,\gamma,k(s,z),z)=\half\kappa\sum_{j=1}^{n}\min\brak{|z^{j}|^{2},\frac {2C_{j}}{\kappa}}
\]
Since $f$ is Lipschitz in $z$, the BSDE($\xi,f$) has a unique solution $(Y,Z)$ such that $Z$ is bounded. Therefore, the
optimal control is
\[
\bar\Delta^{j}_{s}=\left\{\begin{array}{ll} Z_{s}^{j}&\text{when
                                            }|Z_{s}^{j}|>\sqrt{\frac {2nC_{j}}{\kappa}}\\ 0&\text{otherwise}\end{array}\right.
\]
\end{Example}

\appendix
\section{Appendix}
\begin{lemma}\label{mtg}
Assume $Y\in\bbS^{2}, Z\in\bbH^{2}_{m}$, and $N\in\bbM^{2}$. Then, $\int
Y^{*}_{s-}Z_{s}dM_{s}, \int Y_{s-}dN_{s}$ are true martingales.
\end{lemma}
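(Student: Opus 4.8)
The plan is to show that both $X:=\int_{0}^{\cdot}Y_{s-}^{*}Z_{s}\,dM_{s}$ and $X':=\int_{0}^{\cdot}Y_{s-}^{*}\,dN_{s}$ (the integral against $N$ being understood with a transpose, exactly as it is used e.g.\ in the proof of Proposition \ref{stdbsde}) are local martingales whose quadratic variations have \emph{integrable square roots}, and then to upgrade each from a local to a true martingale by combining the Burkholder--Davis--Gundy inequality with the criterion that a local martingale of class D is a uniformly integrable martingale. First I would note that the integrands are predictable: since $Y$ is c\`adl\`ag adapted, $Y_{-}$ is left-continuous adapted, hence predictable, and $Z$ is predictable; integrating such processes against the square-integrable martingales $M$ and $N$ yields (at worst) local martingales once finiteness of the quadratic variation is checked.

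Next I would compute the predictable quadratic variations and bound them pointwise by a product involving $\sup_{t}|Y_{t}|$. Using $d[M,M]_{s}=m_{s}m_{s}^{*}\,dA_{s}$ and $|Y_{s-}^{*}Z_{s}m_{s}|\le|Y_{s-}|\,|Z_{s}m_{s}|$,
\[
[X,X]_{T}=\int_{0}^{T}Y_{s-}^{*}Z_{s}\,d[M,M]_{s}\,Z_{s}^{*}Y_{s-}=\int_{0}^{T}|Y_{s-}^{*}Z_{s}m_{s}|^{2}\,dA_{s}\le\int_{0}^{T}|Y_{s-}|^{2}\,|Z_{s}m_{s}|^{2}\,dA_{s},
\]
while, since $[N,N]$ has positive-semidefinite increments (so $v^{*}d[N,N]_{s}v\le|v|^{2}\,d\,{\rm tr}[N,N]_{s}$),
\[
[X',X']_{T}=\int_{0}^{T}Y_{s-}^{*}\,d[N,N]_{s}\,Y_{s-}\le\int_{0}^{T}|Y_{s-}|^{2}\,d\,{\rm tr}[N,N]_{s}\le\Big(\sup_{t\in[0,T]}|Y_{t}|^{2}\Big){\rm tr}[N,N]_{T}.
\]
Taking square roots and applying Cauchy--Schwarz then gives, with $Y^{*}:=\sup_{t}|Y_{t}|$,
\[
\bbE\sqrt{[X,X]_{T}}\le\bbE\edg{Y^{*}\Big(\int_{0}^{T}|Z_{s}m_{s}|^{2}\,dA_{s}\Big)^{1/2}}\le\Big(\bbE (Y^{*})^{2}\Big)^{1/2}\Big(\bbE\int_{0}^{T}|Z_{s}m_{s}|^{2}\,dA_{s}\Big)^{1/2}<\infty,
\]
and similarly $\bbE\sqrt{[X',X']_{T}}\le(\bbE (Y^{*})^{2})^{1/2}(\bbE\,{\rm tr}[N,N]_{T})^{1/2}<\infty$. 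Here finiteness uses precisely $Y\in\bbS^{2}$ together with $Z\in\bbH^{2}_{m}$, respectively $Y\in\bbS^{2}$ together with $N\in\bbM^{2}$; in particular $[X,X]_{T},[X',X']_{T}<\infty$ a.s., so both stochastic integrals are genuine local martingales.

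Finally I would conclude by Burkholder--Davis--Gundy that $\bbE\sup_{t}|X_{t}|<\infty$ and $\bbE\sup_{t}|X'_{t}|<\infty$; each local martingale is then dominated by an integrable random variable, hence of class D, and a class-D local martingale is a true (uniformly integrable) martingale. The main obstacle, and the reason I route through the square root of the quadratic variation rather than $\bbE[X,X]_{T}$ directly, is that the available integrability is only second-order: bounding $\bbE[X,X]_{T}$ would require either a fourth moment of $\sup_{t}|Y_{t}|$ or a second moment of $\int_{0}^{T}|Z_{s}m_{s}|^{2}\,dA_{s}$, neither of which is granted by $\bbS^{2}\times\bbH^{2}_{m}\times\bbM^{2}$. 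The square-root/BDG combination sidesteps this, since after taking the square root the Cauchy--Schwarz split needs only the $L^{2}$ bounds that these spaces provide; the only other point needing care is the elementary positive-semidefinite estimate $v^{*}d[N,N]_{s}v\le|v|^{2}\,d\,{\rm tr}[N,N]_{s}$ used above.
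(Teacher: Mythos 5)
Your proposal is correct and follows essentially the same route as the paper's own proof: bound $\bbE\,[X,X]_{T}^{1/2}$ (resp.\ $\bbE\,[X',X']_{T}^{1/2}$) via Cauchy--Schwarz using exactly the $\bbS^{2}$ and $\bbH^{2}_{m}$ (resp.\ $\bbM^{2}$) norms, then apply Burkholder--Davis--Gundy to get an integrable dominating supremum and conclude the local martingales are true martingales. Your additional remarks (the explicit positive-semidefinite estimate $v^{*}\,d[N,N]_{s}\,v\le|v|^{2}\,d\,{\rm tr}[N,N]_{s}$ and the class-D closing step) merely make explicit what the paper leaves implicit.
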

\begin{proof}
Note that, by Cauchy-Schwartz inequality,
\begin{align*}
\bbE&\edg{\int_{0}^{\cdot}Y^{*}_{s-}Z_{s}dM_{s},\int_{0}^{\cdot}Y^{*}_{s-}Z_{s}dM_{s}}^{\half}_{T}\\
&=\bbE \sqrt{\int_{0}^{T}|Y^{*}_{s-}Z_{s}m_{s}|^{2}dA_{s}}
\leq\bbE\sup_{t\in[0,T]}|Y_{t}|\sqrt{\int_{0}^{T}|Z_{s}m_{s}|^{2}dA_{s}}\leq\norm{Y}_{\bbS^{2}}^{2}\norm{Z}_{\bbH^{2}_{m}}<\infty
\end{align*}
and
\begin{align*}
 \bbE&\edg{\int_{0}^{\cdot}Y^{*}_{s-}dN_{s},\int_{0}^{\cdot}Y^{*}_{s-}dN_{s}}^{\half}_{T}\leq\bbE\sqrt{\int_{0}^{T}|Y_{s-}|^{2}d{\rm tr}[N,N]_{s}}
\leq\bbE\sup_{t\in[0,T]}|Y_{t}|\sqrt{{\rm tr}[N,N]_{T}}\leq\norm{Y}_{\bbS^{2}}^{2}\norm{N}_{\bbM^{2}}<\infty.
\end{align*}
Then, by Burkholder-Davis-Gundy inequality,
$\sup_{t\in[0,T]}\int_{0}^{t}Y^{*}_{s-}Z_{s}dM_{s}$ and
$\sup_{t\in[0,T]}\int_{0}^{t}Y_{s-}^{*}dN_{s}$ are integrable and
therefore, $\int_{0}^{\cdot}Y^{*}_{s-}Z_{s}dM_{s}$ and $\int_{0}^{\cdot}Y^{*}_{s-}dN_{s}$ are true martingales.
\end{proof}

\begin{lemma}
A $\bbR^{k}$-valued stochastic process $X$ is adapted to $\bbF^{M'}$ if and only if there
exists a function $\scX:[0,T]\times D\to\bbR^{k}$ such that
\[
X_{t}=\scX(t,M'_{[0,t]})
\]
holds almost surely for each $t\in[0,T]$ and $\scX(t,\cdot)$ is $\scD$-measurable.
\end{lemma}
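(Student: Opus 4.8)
The statement has two implications, and both reduce to the measurability of the single map $\Phi_t\colon\Omega\to D$, $\Phi_t(\omega):=M'_{[0,t]}(\omega)$: the claim is that $X$ is $\bbF^{M'}$-adapted exactly when $X_t=\scX(t,\Phi_t)$ for some $\scD$-measurable $\scX(t,\cdot)$, for each fixed $t$. So the plan is to analyze $\Phi_t$ separately for each direction.

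\emph{Necessity.} Suppose $X_t$ is $\scF_t^{M'}$-measurable. Since $\scF_t^{M'}$ is the $\bbP$-augmentation of $\scF_t^{0}:=\sigma(M'_s:s\le t)$, there is an $\scF_t^{0}$-measurable version $\tilde X_t=X_t$ almost surely. Right-continuity of $M'$ gives $\scF_t^{0}=\sigma(M'_q:q\in Q_t)$ for the countable set $Q_t:=(\bbQ\cap[0,t])\cup\crl{t}$, so $\scF_t^{0}$ is countably generated and the Doob--Dynkin factorization produces a Borel map $\psi_t\colon(\bbR^n)^{Q_t}\to\bbR^k$ with $\tilde X_t=\psi_t\big((M'_q)_{q\in Q_t}\big)$. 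I then define $\scX(t,\gamma):=\psi_t\big((\gamma_q)_{q\in Q_t}\big)$. Each evaluation $\gamma\mapsto\gamma_q$ is $1$-Lipschitz for $\norm{\cdot}_\infty$, hence $\scD$-measurable, so $\scX(t,\cdot)$ is $\scD$-measurable as a composition of a $\scD$-measurable map into $(\bbR^n)^{Q_t}$ with the Borel $\psi_t$. Since $q\wedge t=q$ for $q\in Q_t$, we obtain $\scX(t,M'_{[0,t]})=\psi_t\big((M'_q)_{q\in Q_t}\big)=\tilde X_t=X_t$ a.s., which is the required representation. Note the functional produced here is in fact measurable for the coordinate $\sigma$-algebra $\scH_T=\sigma(x_s:s\le T)$, not merely for $\scD$.

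\emph{Sufficiency.} Conversely, given a $\scD$-measurable $\scX(t,\cdot)$ with $X_t=\scX(t,\Phi_t)$ a.s., I must show $\Phi_t$ is $\scF_t^{M'}/\scD$-measurable. Each coordinate $\omega\mapsto M'_{s\wedge t}(\omega)$ is $\scF_t^{M'}$-measurable, so $\Phi_t$ is automatically measurable into the coordinate $\sigma$-algebra $\scH_T$; this already settles the implication for every $\scH_T$-measurable $\scX(t,\cdot)$, which is precisely the class occurring throughout the paper, since all our functionals arise as limits of finite-dimensional (hence $\scH_T$-measurable) maps. It remains only to upgrade coordinate measurability to full $\scD$-measurability of $\Phi_t$.

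\emph{Main obstacle.} This upgrade is the genuinely delicate point: under the uniform norm $D$ is non-separable, so $\scD$ is strictly larger than $\scH_T$ and coordinate measurability does not by itself yield $\scD$-measurability. My plan is to exploit the structure $M'=M+\hat M$. The continuous part $\omega\mapsto M_{\cdot\wedge t}(\omega)$ takes values in the closed (hence $\scD$-measurable) separable subspace $C([0,T];\bbR^n)$, on which the uniform Borel $\sigma$-algebra agrees with the coordinate one, so this part is genuinely $\scD$-measurable; adding it back is harmless because $+\colon D\times D\to D$ is uniformly continuous. The jump part is where the difficulty concentrates: since $\nu$ has finite total intensity, $\hat M$ has only finitely many jumps on $[0,T]$ a.s., and, $M$ being continuous, the jump times and sizes of $M'$ are themselves $\bbF^{M'}$-measurable path functionals; restricting to the event of exactly $p$ jumps before $t$ reduces the claim to the measurability into $(D,\scD)$ of the finite map $(t_1,\dots,t_p,j_1,\dots,j_p)\mapsto\sum_{i}j_i\indicator{[t_i,T]}$, which is not continuous in the jump locations. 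Reconciling this uniform-topology Borel structure with adaptedness — checking, modulo the $\bbP$-null sets available from completeness of $\bbF^{M'}$, that the resulting $\scD$-measurable tests pull back into $\scF_t^{M'}$ — is the step I expect to absorb essentially all the effort; the rest is routine bookkeeping across the finitely many jump configurations.
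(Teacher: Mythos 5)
Your necessity direction is correct and is essentially the paper's own argument: the paper likewise passes to the raw filtration via augmentation, factors $X_t$ through countably many coordinates of the path (quoting Cinlar's Doob--Dynkin result, where you instead use rationals plus right-continuity, which comes to the same thing), and composes with the coordinate projection, which is $\scD$-measurable since evaluations are $1$-Lipschitz for $\norm{\cdot}_\infty$.

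The gap is in sufficiency, and it is twofold. First, you simply do not finish: your ``main obstacle'' paragraph is a plan, not a proof, and you say yourself that the decisive step remains to be done. Second, and more seriously, the statement that plan aims at --- that $\Phi_t:\omega\mapsto M'_{[0,t]}(\omega)$ is $\scF^{M'}_t/\scD$-measurable --- is \emph{false} in this paper's setting, so no bookkeeping over jump configurations can rescue it. Concretely: for fixed $s$, the map $\gamma\mapsto\abs{\gamma_s-\gamma_{s-}}$ is $2$-Lipschitz for $\norm{\cdot}_\infty$, so for an \emph{arbitrary} set $A\subset[0,t]$ the functional $G_A(\gamma):=\sup_{s\in A}\abs{\gamma_s-\gamma_{s-}}$ is lower semicontinuous, hence $\scD$-measurable. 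But $\crl{G_A(M'_{[0,t]})>0}$ agrees up to a null set with the event that some jump time of $\hat M$ before $t$ lies in $A$; on the positive-probability event that $\nu$ has exactly one point in $[0,t]\times\bbR^{n}$, that jump time is uniformly distributed on $[0,t]$, so for non-Lebesgue-measurable $A$ this event is not even in the completed $\scF$, let alone $\scF^{M'}_t$. The same phenomenon already kills your reduced finite-dimensional map: for a fixed jump size $j\neq 0$, the image of $\crl{(s,j):s\in A}$ under $(t_1,j_1)\mapsto j_1\indicator{[t_1,T]}$ is uniformly discrete, hence closed in $(D,\norm{\cdot}_\infty)$, so that map is non-Borel --- not merely discontinuous, as you suggest.

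For comparison, the paper disposes of this direction in one line by asserting exactly the measurability you were trying to prove (``$M'_{[0,t]}$ is $\scF_{t}\backslash\scD$-measurable''), so you have in fact put your finger on a genuine soft spot in the paper's own proof; but your proposal neither proves the lemma as stated nor repairs it. Two honest repairs exist. (i) Weaken sufficiency to $\scH_T$-measurable (cylinder) functionals: your own first observation then closes that case immediately, the forward direction only ever produces such functionals, and this is the only class the paper uses (limits of finite-dimensional functionals, as in Theorem \ref{prelim2}). (ii) Keep $\scD$ but argue at the level of events: if $\scX(t,\Phi_t)$ agrees a.s. with an $\scF$-measurable variable, then since $\sigma(M'_q:q\in(\bbQ\cap[0,t])\cup\crl{t})$ is countably generated and every set $\Phi_t^{-1}(F)$ is a union of its atoms, a Blackwell--Dellacherie saturation argument (under standard-Borel hypotheses on $\Omega$) yields an a.s.\ equal $\scF^{M'}_t$-measurable version. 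Either ingredient is absent from your write-up, so as it stands the sufficiency half is not proved.
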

\begin{proof}
Note that if $X$ is adapted to $\bbF^{M'}$, there exists $X'$ which is adapted to the filtration generated by
$M$ and $X=X'$ almost surely. Therefore, without loss of generality, we can prove the theorem under the filtration generated by $M'$.\\
($\Rightarrow$) By Cinlar (2010, \cite{Cinlar:2011ei},
Chapter 2, Proposition 4.6), for all $t\in[0,\infty)$,
there exists a sequence $(t_{n})_{n=1,2,...}\subset [0,t]$ and a
$\otimes_{l}\scB(\bbR^{n})$-measurable function $\scX_{t}^{c}:\prod_{l}\bbR^{n}\to\bbR^{k}$ such
that
\[
X_{t}=\scX^{c}_{t}(M'_{t_{1}},M'_{t_{2}},...).
\]
Let us denote $\pi$ to be the coordinate map; that is,
$\pi_{a,b,...}(\gamma)=(\gamma_{a},\gamma_{b},...)$. Then,
\[
X_{t}:=(\scX^{c}_{t}\circ\pi_{t_{1},t_{2},...})(M'_{[0,t]})
\]
Since $\pi_{t_{1},t_{2},...}$ is $\scD$-measurable, $\scX(t,\cdot):=\scX^{c}_{t}\circ\pi_{t_{1},t_{2},...}$ is the function
we want.\\
($\Leftarrow$) Since $M'_{[0,t]}$ is
$\scF_{t}\backslash\scD$-measurable and $\scX$ is
$\scD$-measurable, the claim is obvious.
\end{proof}


\begin{thebibliography}{10}
	
	\bibitem{Bally:2016dw}
	V.~Bally, L.~Caramellino, and R.~Cont.
	\newblock {\em {Stochastic Integration by Parts and Functional It{\^o}
			Calculus}}.
	\newblock Birkh{\"a}user, Mar. 2016.
	
	\bibitem{Bismut:1973vw}
	J.-M. Bismut.
	\newblock {Conjugate convex functions in optimal stochastic control}.
	\newblock {\em Journal of Mathematical Analysis and Applications}, 44:384--404,
	1973.
	
	\bibitem{Briand:2013cu}
	P.~Briand and R.~Elie.
	\newblock {A simple constructive approach to quadratic BSDEs with or without
		delay}.
	\newblock {\em Stochastic Processes and their Applications}, 123(8):2921--2939,
	2013.
	
	\bibitem{Briand:2006p17043}
	P.~Briand and Y.~Hu.
	\newblock {BSDE with quadratic growth and unbounded terminal value}.
	\newblock {\em Probability Theory and Related Fields}, 136(4):604--618, 2006.
	
	\bibitem{Briand:2008p16923}
	P.~Briand and Y.~Hu.
	\newblock {Quadratic BSDEs with convex generators and unbounded terminal
		conditions}.
	\newblock {\em Probability Theory and Related Fields}, 141(3-4):543--567, 2008.
	
	\bibitem{Chang:1992p29958}
	K.-C. Chang, W.-Y. Ding, and R.~Ye.
	\newblock {Finite-time blow-up of the heat flow of harmonic maps from
		surfaces}.
	\newblock {\em Journal of Differential Geometry}, 36(2):507--515, 1992.
	
	\bibitem{Cheridito:2009p29901}
	P.~Cheridito, U.~Horst, M.~Kupper, and T.~A. Pirvu.
	\newblock {Equilibrium pricing in incomplete markets under translation
		invariant preferences}.
	\newblock {\em Mathematics of Operations Research}, 41(1):174--195, 2016.
	
	\bibitem{Cheridito:2014cb}
	P.~Cheridito and K.~Nam.
	\newblock {BSDEs with terminal conditions that have bounded Malliavin
		derivative}.
	\newblock {\em Journal of Functional Analysis}, 266(3):1257--1285, 2014.
	
	\bibitem{Cheridito:2013tg}
	P.~Cheridito and K.~Nam.
	\newblock {Multidimensional quadratic and subquadratic BSDEs with special
		structure}.
	\newblock {\em Stochastics An International Journal of Probability and
		Stochastic Processes}, 87(5):871--884, 2015.
	
	\bibitem{Cheridito:2014uz}
	P.~Cheridito and K.~Nam.
	\newblock {BSE's, BSDE's and Fixed Point Problems}.
	\newblock {\em Annals of Probability}, To Appear.
	
	\bibitem{Chung:2005iu}
	K.~L. Chung and J.~B. Walsh.
	\newblock {\em {Markov processes, Brownian motion, and time symmetry}}, volume
	249 of {\em Grundlehren der Mathematischen Wissenschaften [Fundamental
		Principles of Mathematical Sciences]}.
	\newblock Springer, New York, New York, NY, second edition, 2005.
	
	\bibitem{Cinlar:2011ei}
	E.~Cinlar.
	\newblock {\em {Probability and stochastics}}, volume 261 of {\em Graduate
		Texts in Mathematics}.
	\newblock Springer, New York, 2011.
	
	\bibitem{Cont:2016wx}
	R.~Cont.
	\newblock {Functional It{\^o} Calculus and Functional Kolmogorov Equations}.
	\newblock In F.~Utzet and J.~Vives, editors, {\em Stochastic Integration by
		Parts and Functional It{\^o} Calculus}, pages 115--208. 2016.
	
	\bibitem{Cont:2013hy}
	R.~Cont and D.-A. Fournie.
	\newblock {Functional It{\^o} calculus and stochastic integral representation
		of martingales}.
	\newblock {\em The Annals of Probability}, 41(1):109--133, Jan. 2013.
	
	\bibitem{Darling:1997ui}
	R.~W.~R. Darling and E.~Pardoux.
	\newblock {Backwards SDE with Random Terminal Time and Applications to
		Semilinear Elliptic PDE}.
	\newblock {\em The Annals of Probability}, 25(3):1135--1159, 1997.
	
	\bibitem{Delbaen:2010gj}
	F.~Delbaen, Y.~Hu, and X.~Bao.
	\newblock {Backward SDEs with superquadratic growth}.
	\newblock {\em Probability Theory and Related Fields}, 150(1-2):145--192, 2010.
	
	\bibitem{Delbaen:2011ji}
	F.~Delbaen, Y.~Hu, and A.~Richou.
	\newblock {On the uniqueness of solutions to quadratic BSDEs with convex
		generators and unbounded terminal conditions}.
	\newblock {\em Annales de l'Institut Henri Poincar{\'e}, Probabilit{\'e}s et
		Statistiques}, 47(2):559--574, 2011.
	
	\bibitem{Dupire:2009eu}
	B.~Dupire.
	\newblock {Functional It{\^o} Calculus}.
	\newblock {\em SSRN Electronic Journal}, 2009.
	
	\bibitem{ElKaroui:1997vv}
	N.~El~Karoui and S.~J. Huang.
	\newblock {A general result of existence and uniqueness of backward stochastic
		differential equations}.
	\newblock In {\em Backward stochastic differential equations (Paris,
		1995--1996)}, pages 27--36. Longman, Harlow, 1997.
	
	\bibitem{ElKaroui:1997dn}
	N.~El~Karoui, S.~Peng, and M.~C. Quenez.
	\newblock {Backward stochastic differential equations in finance}.
	\newblock {\em Mathematical Finance. An International Journal of Mathematics,
		Statistics and Financial Economics}, 7(1):1--71, 1997.
	
	\bibitem{Anonymous:2012p19201}
	C.~Frei and G.~Dos~Reis.
	\newblock {A financial market with interacting investors: does an equilibrium
		exist?}
	\newblock {\em Mathematics and Financial Economics}, 4(3):161--182, 2011.
	
	\bibitem{RosazzaGianin:2006p26781}
	E.~R. Gianin.
	\newblock {Risk measures via $g$-expectations}.
	\newblock {\em Insurance: Mathematics and Economics}, 39(1):19--34, 2006.
	
	\bibitem{Hu:2012p33899}
	Y.~Hu.
	\newblock {BMO Martingales and Positive Solutions of Heat Equations}.
	\newblock {\em Arxiv preprint arXiv:1201.5454}, 2012.
	
	\bibitem{Hu:2005tj}
	Y.~Hu, P.~Imkeller, and M.~M{\"u}ller.
	\newblock {Utility maximization in incomplete markets}.
	\newblock {\em The Annals of Applied Probability}, 15(3):1691--1712, 2005.
	
	\bibitem{Nualart:2012p35991}
	Y.~Hu, D.~Nualart, and X.~Song.
	\newblock {Malliavin calculus for backward stochastic differential equations
		and application to numerical solutions}.
	\newblock {\em The Annals of Applied Probability}, 21(6):2379--2423, 2011.
	
	\bibitem{Hu:2014th}
	Y.~Hu and S.~Tang.
	\newblock {Multi-dimensional backward stochastic differential equations of
		diagonally quadratic generators}.
	\newblock {\em Stochastic Processes and their Applications}, 126(4):1066--1086,
	Apr. 2016.
	
	\bibitem{Imkeller:2012dh}
	P.~Imkeller, A.~Reveillac, and A.~Richter.
	\newblock {Differentiability of quadratic BSDEs generated by continuous
		martingales}.
	\newblock {\em The Annals of Applied Probability}, 22(1):285--336, Feb. 2012.
	
	\bibitem{Jamneshan:2014ui}
	A.~Jamneshan, M.~Kupper, and P.~Luo.
	\newblock {Multidimensional quadratic BSDEs with separated generators}.
	\newblock {\em arXiv.org}, Dec. 2014.
	
	\bibitem{JEANBLANC:2015hf}
	M.~Jeanblanc, T.~Mastrolia, D.~Possama~{\i}, and A.~Reveillac.
	\newblock {Utility maximization with random horizon: a BSDE approach}.
	\newblock {\em International Journal of Theoretical and Applied Finance},
	18(7):1550045--1550043, 2015.
	
	\bibitem{Karatzas:1991bi}
	I.~Karatzas and S.~E. Shreve.
	\newblock {\em {Brownian motion and stochastic calculus}}, volume 113 of {\em
		Graduate Texts in Mathematics}.
	\newblock Springer-Verlag, New York, second edition, 1991.
	
	\bibitem{Kardaras:2015ii}
	C.~Kardaras, H.~Xing, and G.~{\v Z}itkovi{\'c}.
	\newblock {Incomplete Stochastic Equilibria with Exponential Utilities: Close
		to Pareto Optimality}.
	\newblock {\em SSRN Electronic Journal}, 2015.
	
	\bibitem{Kazamaki:1994ux}
	N.~Kazamaki.
	\newblock {\em {Continuous exponential martingales and BMO}}, volume 1579 of
	{\em Lecture Notes in Mathematics}.
	\newblock Springer-Verlag, Berlin, 1994.
	
	\bibitem{KaziTani:2015vd}
	N.~Kazi-Tani, D.~Possama~{\i}, and C.~Zhou.
	\newblock {Quadratic BSDEs with jumps: a fixed-point approach}.
	\newblock {\em Electron. J. Probab}, 20:no. 66--28, 2015.
	
	\bibitem{Kobylanski:2000cy}
	M.~Kobylanski.
	\newblock {Backward stochastic differential equations and partial differential
		equations with quadratic growth}.
	\newblock {\em The Annals of Probability}, 28(2):558--602, 2000.
	
	\bibitem{Kupper:2015wv}
	M.~Kupper, P.~Luo, and L.~Tangpi.
	\newblock {Multidimensional Markov FBSDEs with superquadratic growth}.
	\newblock {\em arXiv.org}, May 2015.
	
	\bibitem{Morlais:2008jr}
	M.~A. Morlais.
	\newblock {Quadratic BSDEs driven by a continuous martingale and applications
		to the utility maximization problem}.
	\newblock {\em Finance and Stochastics}, 13(1):121--150, 2009.
	
	\bibitem{Nam:2014wt}
	K.~Nam.
	\newblock {\em {Backward Stochastic Differential Equations with Superlinear
			Drivers}}.
	\newblock PhD thesis, Princeton, NJ : Princeton University, 2014.
	
	\bibitem{Nualart:2006p36713}
	D.~Nualart.
	\newblock {\em {The Malliavin calculus and related topics}}.
	\newblock Probability and its Applications (New York). Springer-Verlag, Berlin,
	second edition, 2006.
	
	\bibitem{Papapantoleon:2016ws}
	A.~Papapantoleon, D.~Possama{\"\i}, and A.~Saplaouras.
	\newblock {Existence and uniqueness results for BSDEs with jumps: the whole
		nine yards}.
	\newblock {\em arXiv.org}, July 2016.
	
	\bibitem{Pardoux:1990p20509}
	E.~Pardoux and S.~Peng.
	\newblock {Adapted solution of a backward stochastic differential equation}.
	\newblock {\em Systems {\&} Control Letters}, 14(1):55--61, 1990.
	
	\bibitem{Peng:1991ku}
	S.~Peng.
	\newblock {Probabilistic interpretation for systems of quasilinear parabolic
		partial differential equations}.
	\newblock {\em Stochastics An International Journal of Probability and
		Stochastic Processes}, 37(1):61--74, Oct. 1991.
	
	\bibitem{Protter:2004wf}
	P.~E. Protter.
	\newblock {\em {Stochastic Integration and Differential Equations}}.
	\newblock Springer Verlag, 2004.
	
	\bibitem{Richou:2012bf}
	A.~Richou.
	\newblock {Markovian quadratic and superquadratic BSDEs with an unbounded
		terminal condition}.
	\newblock {\em Stochastic Processes and their Applications}, 122(9):3173--3208,
	2012.
	
	\bibitem{Tevzadze:2008p17046}
	R.~Tevzadze.
	\newblock {Solvability of backward stochastic differential equations with
		quadratic growth}.
	\newblock {\em Stochastic Processes and their Applications}, 118(3):503--515,
	2008.
	
	\bibitem{Xing:2016up}
	H.~Xing and G.~{\v Z}itkovi{\'c}.
	\newblock {A class of globally solvable Markovian quadratic BSDE systems and
		applications}.
	\newblock {\em arXiv.org}, Mar. 2016.
	
\end{thebibliography}
\end{document}